\newcounter{mytheorem} 
\newtheorem{theorem}[mytheorem]{Theorem}
\newtheorem{definition}[mytheorem]{Definition}
\newtheorem{lemma}[mytheorem]{Lemma}
\newtheorem{claim}[mytheorem]{Claim}
\newtheorem{proposition}[mytheorem]{Proposition}
\newtheorem{problem*}{Problem}
\newtheorem{example}[mytheorem]{Example}
\newtheorem{remark}[mytheorem]{Remark}
\newtheorem{corollary}[mytheorem]{Corollary}
\newtheorem{clm}[mytheorem]{Claim}
\newtheorem*{clm*}{Claim}
\newcommand\testshape{family=\f@family; series=\f@series; shape=\f@shape.}
\def\myemphInternal#1{\if n\f@shape%
\begingroup\itshape #1\endgroup\/%
\else\begingroup\bfseries #1\endgroup%
\fi}
\def\myemph{\futurelet\testchar\MaybeOptArgmyemph}
\def\MaybeOptArgmyemph{\ifx[\testchar \let\next\OptArgmyemph
                 \else \let\next\NoOptArgmyemph \fi \next}
\def\OptArgmyemph[#1]#2{\index{#1}\myemphInternal{#2}}
\def\NoOptArgmyemph#1{\myemphInternal{#1}}
\newcommand\RRR{\mathbb{R}}
\newcommand\QQQ{\mathbb{Q}}
\newcommand\CCC{\mathbb{C}}
\newcommand\ZZZ{\mathbb{Z}}
\newcommand\NNN{\mathbb{N}}
\newcommand\Int[1]{\mathrm{Int}(#1)}
\newcommand\cl[1]{\overline{#1}}
\newcommand\Per{\mathrm{Per}}
\newcommand\id{\mathrm{id}}
\newcommand\diam{\mathrm{diam}}
\newcommand\AFld{F}
\newcommand\AFlow{\mathbf{\AFld}}
\newcommand\BFld{G}
\newcommand\BFlow{\mathbf{\BFld}}
\newcommand\Mman{M}
\newcommand\pfunc{\xi}
\newcommand\nufunc{\nu}
\newcommand\tmufunc{\widetilde{\pfunc}}
\newcommand\Bman{B}
\newcommand\Qman{Q}
\newcommand\Vman{V}
\newcommand\Uman{U}
\newcommand\Wman{W}
\newcommand\eps{\varepsilon}
\newcommand\Cnt[1]{\mathcal{C}}
\newcommand\Cont[1]{\mathcal{C}^{#1}}
\newcommand\Cr[3]{\Cont{#1}(#2,#3)}
\newcommand\Cz[2]{\Cr{}{#1}{#2}}
\newcommand\orb{o}
\newcommand\FixF{\Sigma}
\newcommand\FixA{\FixF_{\AFld}}
\newcommand\dif{h}
\newcommand\orig{0} 
\newcommand\PF{P}
\newcommand\RPF[1]{RP(#1)}
\newcommand\PPF[1]{P(#1)}
\newcommand\ThetaPropPositive{{\rm(1)}}
\newcommand\ThetaPropPeriod{{\rm(2)}}
\newcommand\ThetaPropRegularOpen{{\rm(3)}}
\newcommand\ThetaPropRegularRegCl{{\rm(4)}}
\newcommand\ThetaPropCircleAction{{\rm(4)}}
\newcommand\upb{\mathsf{C}}
\newcommand\lpb{\mathsf{L}}
\newcommand\MULA{\mathsf{A}}
\newcommand\MULT{\mathsf{T}}
\newcommand\MULS{\mathsf{S}}
\newcommand\Jord{\mathbf{J}}
\begin{document}	

\title{Period functions for $\Cont{0}$ and $\Cont{1}$ flows}
\author{Sergiy Maksymenko}
\address{Topology dept., Institute of Mathematics of NAS of Ukraine, Tere\-shchenkivs'ka st. 3, Kyiv, 01601 Ukraine}
\date{2009/10/15}
\email{maks@imath.kiev.ua}
\keywords{flow, orbit preserving diffeomorphism, circle action}
\subjclass[2000]{37C10, 22C05}

\begin{abstract}
Let $\mathbf{F}:M\times\mathbb{R}\to M$ be a continuous flow on a topological manifold $M$.
For every open $V\subset M$ denote by $P(V)$ the set of all continuous functions $\pfunc:V\to\mathbb{R}$ such that $\mathbf{F}(x,\pfunc(x))=x$ for all $x\in V$.
Such functions vanish at non-periodic points and their values at periodic points are equal to the corresponding periods (in general not minimal).
In this paper $P(V)$ is described for connected open subsets $\Vman\subset\Mman$. 

It is also shown that there is a difference between the sets $P(V)$ for $C^{0}$ and $C^1$ flows: if $V$ is open and connected, and the flow $\mathbf{F}$ is $C^1$, then any $\pfunc\in P(V)$ being not identically zero on $V$ is everywhere non-zero on $V$, while for $C^0$ flows there can exist functions $\pfunc\in P(V)$ vanishing at fixed points.
\end{abstract}

\maketitle



\section{Introduction}
Let $\AFlow:\Mman\times\RRR\to\Mman$ be a continuous flow on a topological finite-dimensional manifold $\Mman$.
Let also $\FixF$ be the set of fixed points of $\AFlow$.
For $x\in\Mman$ we will denote by $\orb_x$ the orbit of $x$.
If $x$ is periodic, then $\Per(x)$ is the period of $x$.

\begin{definition}\label{defn:P-func}
Let $\Vman \subset \Mman$ be a subset and $\pfunc:\Vman\to\RRR$ be a continuous function.
We will say that $\pfunc$ is a \myemph{period} function or simply a \myemph{$\PF$-function} (with respect to $\AFlow$) if $\AFlow(x,\pfunc(x))=x$ for all $x\in\Vman$.
The set of all $\PF$-functions on $\Vman$ will be denoted by $\PPF{\Vman}$.
\end{definition}

The aim of this paper is to give a description of $\PPF{\Vman}$ for open connected subsets $\Vman\subset\Mman$ with respect to a  continuous flow on a topological manifold $\Mman$ (Theorem\;\ref{th:description_PF_V}).
Such a description was given in \cite[Th.\;12]{Maks:TA:2003} for $\Cont{\infty}$ flows.
Unfortunately, the proof of a key statement \cite[Pr.\;10]{Maks:TA:2003} for \cite[Th.\;12]{Maks:TA:2003} contains a gap.
We will show that in fact that proposition is true (Proposition~\ref{pr:mu_in_kerA_vanish}).
Moreover, we also indicate the difference between $\PF$-functions for $\Cont{0}$ and $\Cont{1}$ flows (Theorem\;\ref{th:C1-flows-non-periodic}).
Our methods are based on well-known theorems of M.\;Newman about actions of finite groups.

\smallskip

The following easy lemma explains the term \myemph{$\PF$-function}.
The proof is the same as in \cite[Lm.\;5\,\&\,7]{Maks:TA:2003} and we leave it for the reader.
\begin{lemma}\label{lm:init_prop_ShAV}{\rm\;\cite[Lm.\;5\,\&\,7]{Maks:TA:2003}}
For any subset $\Vman\subset\Mman$ the set $\PPF{\Vman}$ is a group with respect to the point-wise addition.

Let $x\in\Vman$ and $\pfunc\in\PPF{\Vman}$.
Then $\pfunc$ is locally constant on $\orb_x\cap\Vman$.
In particular, if $x$ is non-periodic, then $\pfunc|_{\orb_{x}\cap\Vman}=0$.
Suppose $x$ is periodic, and let $\omega$ be some path component of $\orb_{x}\cap\Vman$.
Then $\pfunc(y)=n\cdot\Per(x)$ for some $n\in\ZZZ$ depending on $\omega$ and all $y\in\omega$.
\end{lemma}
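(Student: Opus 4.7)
The plan is to reduce everything to a simple structural fact: for any $y\in\Mman$, the stabilizer
\begin{equation*}
\mathrm{Stab}(y):=\{t\in\RRR:\AFlow(y,t)=y\}
\end{equation*}
is a closed subgroup of $\RRR$, hence equal to one of $\{0\}$, $\RRR$, or $\Per(y)\ZZZ$ according to whether $y$ is non-periodic, fixed, or periodic with minimal period $\Per(y)>0$. Moreover, $\mathrm{Stab}$ is constant along each orbit: if $y=\AFlow(x,s)$, then $\AFlow(y,t)=y$ iff $\AFlow(x,s+t)=\AFlow(x,s)$ iff $t\in\mathrm{Stab}(x)$.

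First, I would establish the group structure of $\PPF{\Vman}$ by a direct calculation with the flow law. The zero function belongs to $\PPF{\Vman}$; if $\pfunc_{1},\pfunc_{2}\in\PPF{\Vman}$, then
\begin{equation*}
\AFlow(x,\pfunc_{1}(x)+\pfunc_{2}(x))=\AFlow(\AFlow(x,\pfunc_{1}(x)),\pfunc_{2}(x))=\AFlow(x,\pfunc_{2}(x))=x,
\end{equation*}
and similarly $\AFlow(x,-\pfunc(x))=\AFlow(\AFlow(x,\pfunc(x)),-\pfunc(x))=\AFlow(x,0)=x$, giving closure under addition and inversion; commutativity is inherited from $(\RRR,+)$.

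For the orbit statements, I would fix $x\in\Vman$ and $\pfunc\in\PPF{\Vman}$, and observe that the defining identity $\AFlow(y,\pfunc(y))=y$ forces $\pfunc(y)\in\mathrm{Stab}(y)=\mathrm{Stab}(x)$ for every $y\in\orb_{x}\cap\Vman$. If $x$ is non-periodic, then $\mathrm{Stab}(x)=\{0\}$ and $\pfunc$ vanishes identically on the orbit. If $x$ is fixed, then $\orb_{x}\cap\Vman=\{x\}$ and the local-constancy assertion is vacuous. If $x$ is periodic but not fixed, then $\pfunc|_{\orb_{x}\cap\Vman}$ is a continuous map into the discrete set $\Per(x)\ZZZ$, hence locally constant; on any path component $\omega\subset\orb_{x}\cap\Vman$, which is an open arc in the topological circle $\orb_{x}$ and therefore connected, $\pfunc$ takes a single value $n\cdot\Per(x)$ for a unique $n\in\ZZZ$.

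The only non-trivial ingredient is the classification of closed subgroups of $\RRR$ as $\{0\}$, $T\ZZZ$ (with $T>0$), or all of $\RRR$, together with the continuity argument ruling out an accumulation of periods at $0$ for a non-fixed periodic point (which would force $x$ to be fixed). Both are standard, so the remainder of the argument is bookkeeping with the flow axioms and the definition of $\PPF{\Vman}$.
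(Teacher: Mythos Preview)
Your argument is correct and is exactly the standard one; the paper itself omits the proof entirely, stating only that it is the same as in \cite[Lm.\;5\,\&\,7]{Maks:TA:2003} and leaving it to the reader. One small remark: since the lemma is stated for an \emph{arbitrary} subset $\Vman\subset\Mman$, a path component $\omega$ of $\orb_{x}\cap\Vman$ need not be an open arc of the circle $\orb_{x}$; however, this does not matter for your argument, as path components are connected by definition and a continuous map from a connected set into the discrete set $\Per(x)\ZZZ$ is constant.
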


It is not true that any $\PF$-function on any subset $\Vman\subset\Mman$ is constant on all of $\orb_{x}\cap\Vman$ for each $x\in\Vman$, see Example\;\ref{exmp:nonreg-P-function} below.
Therefore we give the following definition.

\begin{definition}\label{defn:regular_P_func}
A $\PF$-function $\pfunc:\Vman\to\RRR$ will be called \myemph{regular} if $\pfunc$ is constant on $\orb_x\cap\Vman$ for each $x\in\Vman$.
\end{definition}

Denote by $\RPF{\Vman}$ the set of all regular $\PF$-functions on $\Vman$.
Then $\RPF{\Vman}$ is a subgroup of $\PPF{\Vman}$.

\begin{remark}\label{rem:o_cap_V_connected_mu_is_regular}\rm
If for any periodic orbit $\orb$ the intersection $\orb\cap\Vman$ is either empty or connected, e.g. in the case when $\Vman$ is $\AFlow$-invariant, then any $\PF$-function on $\Vman$ is regular.
\end{remark}

The following theorem extends\;\cite[Th.\;12]{Maks:TA:2003} for continuous case%
\footnote{Unfortunately, the proof of\;\cite[Pr.\;10]{Maks:TA:2003} being the crucial step for\;\cite[Th.\;12]{Maks:TA:2003} is incorrect.
It was wrongly claimed that (using notations from \cite{Maks:TA:2003}) \myemph{if $\Phi_t$, $t\in(-\eps,\eps)$, is a local flow on $\RRR^n$, and $x\in\RRR^n$, then the map $\Psi(x,*):(-\eps,\eps)\to GL_{n}(\RRR)$, associating to each $t$ the Jacobi matrix of the flow map $\Phi_t$ at $x$, is a homomorphism}.
This is not so since $\Psi$ is determined by a particular choice of the trivialization of tangent bundle along the orbit of $x$ and.
In fact, $\Psi$ cane be any smooth map such that $\Psi(x,0)$ is a unit matrix.

Nevertheless, the statement of\;\cite[Pr.\;10]{Maks:TA:2003} and the arguments which deduce from it \cite[Th.\;12]{Maks:TA:2003} are true.
We will extend\;\cite[Pr.\;10]{Maks:TA:2003} in Proposition\;\ref{pr:mu_in_kerA_vanish} and thus present a correct proof.
\label{rem:incorrect_proof_Pr10_Maks:Shifts}
}.

\begin{theorem}\label{th:description_PF_V}
Let $\Mman$ be a finite-dimensional topological manifold possibly non-compact and with or without boundary, $\AFlow:\Mman\times\RRR\to\Mman$ be a flow, and $\Vman\subset\Mman$ be an \myemph{open, connected set}.

{\rm(A)}
If $\Int{\FixF}\cap\Vman\not=\varnothing$, then 
$$
\PPF{\Vman}=\{\pfunc\in\Cz{\Vman}{\RRR} \ : \ \pfunc|_{\Vman\setminus\FixF}=0\}.
$$

{\rm(B)}
Suppose $\Int{\FixF}\cap\Vman=\varnothing$.
Then one of the following possibilities is realized: either
$$\PPF{\Vman}=\{0\}$$
or
$$\PPF{\Vman}=\{n\theta\}_{n\in\ZZZ}$$
for some continuous function $\theta:\Vman\to\RRR$ having the following properties:
\begin{enumerate}
 \item[\ThetaPropPositive]
$\theta>0$ on $\Vman\setminus\FixF$, so this set consists of periodic points only.
 \item[\ThetaPropPeriod]
There exists an open and everywhere dense subset $\Qman\subset\Vman$ such that $\theta(x)=\Per(x)$ for all $x\in\Qman$.
 \item[\ThetaPropRegularOpen]
$\theta$ is regular.
 \item[\ThetaPropCircleAction]
Denote $\Uman=\AFlow(\Vman\times\RRR)$.
Then $\theta$ extends to a $\PF$-function on $\Uman$ and there is a circle action $\BFlow:\Uman\times S^1\to\Uman$ defined by $\BFlow(x,t) =\AFlow(x,t\theta(x))$, $x\in\Uman$, $t\in S^1 =\RRR/\ZZZ$.
The orbits of this action coincides with the ones of $\AFlow$.
\end{enumerate}
In particular, in all the cases $\RPF{\Vman}=\PPF{\Vman}$.
\end{theorem}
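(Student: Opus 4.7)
The plan is to deduce both parts of the theorem from a single local-vanishing dichotomy: for every $\pfunc\in\PPF{\Vman}$, the zero-set $\pfunc^{-1}(0)$ is simultaneously closed and open in $\Vman\setminus\FixF$. Closedness is immediate from continuity, while openness is the content of a replacement Proposition~\ref{pr:mu_in_kerA_vanish} for the flawed \cite[Pr.\;10]{Maks:TA:2003}. Granting it, on every connected component of $\Vman\setminus\FixF$ each $\pfunc$ is either identically zero or nowhere zero.

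For part (B), assume $\Int{\FixF}\cap\Vman=\varnothing$ and that there is a nontrivial $\pfunc\in\PPF{\Vman}$. I first upgrade the component-wise dichotomy to all of $\Vman\setminus\FixF$: if $\pfunc$ vanished on one component and not on another, continuity at a common frontier fixed point $x_\ast\in\FixF\cap\Vman$ combined with the integrality $\pfunc(x)\in\ZZZ\cdot\Per(x)$ from Lemma~\ref{lm:init_prop_ShAV} would give an open partition of $\Vman$ violating connectedness. Therefore $\pfunc$ is nowhere zero on $\Vman\setminus\FixF$, and the evaluation $\pfunc\mapsto\pfunc(x)/\Per(x)$ at any periodic $x$ gives an injective homomorphism $\PPF{\Vman}\to\ZZZ$; this produces the cyclic structure $\PPF{\Vman}=\ZZZ\theta$ for a unique positive generator $\theta$. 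The properties \ThetaPropPositive--\ThetaPropCircleAction\ then follow in turn: \ThetaPropPositive\ is inherent; \ThetaPropPeriod\ comes from density of points where $\theta=\Per$; regularity \ThetaPropRegularOpen\ from Lemma~\ref{lm:init_prop_ShAV} plus continuity along the path components of $\orb_x\cap\Vman$; and \ThetaPropCircleAction\ from orbit-invariance of $\theta$ together with $\AFlow(x,\theta(x))=x$, which makes $\BFlow(x,t)=\AFlow(x,t\theta(x))$ descend to $t\in\RRR/\ZZZ$.

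For part (A), assume $\Int{\FixF}\cap\Vman\ne\varnothing$. The inclusion $\supset$ is automatic since $\AFlow(x,t)=x$ for every $t$ at any fixed $x$. Conversely, suppose some $\pfunc\in\PPF{\Vman}$ is nowhere zero on a component $C$ of $\Vman\setminus\FixF$. Path-connectedness of the open connected set $\Vman$ yields a path from a point of $\Int{\FixF}\cap\Vman$ to a point of $C$; its first exit time from $\Int{\FixF}$ produces a fixed point $x_\ast\in\partial\Int{\FixF}\cap\Vman$ adjacent to an open piece of $\Int{\FixF}$ on which the time-$\pfunc(x_\ast)$ shift map $\AFlow(\cdot,\pfunc(x_\ast))$ coincides with the identity. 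Newman's theorem applied to this homeomorphism of the ambient manifold forces $\pfunc(x_\ast)=0$, and then the clopen dichotomy propagated along the path yields $\pfunc\equiv 0$ on $C$, a contradiction.

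The principal obstacle is Proposition~\ref{pr:mu_in_kerA_vanish} itself, since this is precisely the step where the original reference reasoned via the Jacobian of the shift map. In the $\Cont{0}$ setting the tangent-space linearization is unavailable, and my plan is to replace it by Newman's topological theorem on periodic homeomorphisms of manifolds, which furnishes a lower bound $\eps_K>0$ on the minimal periods of periodic points in any compact set $K\subset\Vman\setminus\FixF$. If $\pfunc(x_0)=0$ for some $x_0\in\Vman\setminus\FixF$, then continuity of $\pfunc$ produces a compact neighborhood $K\ni x_0$ disjoint from $\FixF$ on which $|\pfunc|<\eps_K$; Lemma~\ref{lm:init_prop_ShAV}, which forces $\pfunc(y)\in\ZZZ\cdot\Per(y)$ at each periodic $y\in K$ and $\pfunc(y)=0$ at every non-periodic $y$, then yields $\pfunc\equiv 0$ on $K$ and hence the required local openness of $\pfunc^{-1}(0)$ in $\Vman\setminus\FixF$.
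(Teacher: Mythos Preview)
Your plan misidentifies the key tool. Proposition~\ref{pr:mu_in_kerA_vanish} in the paper is a $\Cont{1}$ statement used only for Theorem~\ref{th:C1-flows-non-periodic}; Theorem~\ref{th:description_PF_V} is proved with the $\Cont{0}$ lemmas of \S\ref{sect:prop_P_func}. The ``replacement'' you describe---openness of $\pfunc^{-1}(0)$ in $\Vman\setminus\FixF$---is correct and is just Lemma~\ref{lm:local_uniq_sh_func} (it needs only Lemma~\ref{lm:small_periods_x_in_FixF}, not Newman). But this dichotomy lives on $\Vman\setminus\FixF$, not on $\Vman$, and the substance of the theorem is precisely crossing $\FixF$. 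Your ``upgrade'' step in (B) fails there: if $\pfunc\equiv 0$ on a component $C$ and $\pfunc\neq 0$ on an adjacent component $C'$ sharing a frontier point $x_\ast\in\FixF$, continuity gives $\pfunc(x_\ast)=0$ and from the $C'$ side $|\pfunc(x)|\geq\Per(x)\to 0$; since $x_\ast$ \emph{is} fixed, this is perfectly consistent and no contradiction arises from integrality alone. The paper closes this gap with two ingredients you omit: local \emph{regularity} of $\pfunc$ near $x_\ast$ (Lemma~\ref{lm:local_regularity_P_func}), and then the Newman-based divisibility lemma (Lemma~\ref{lm:mu_p_is_regular_PF}, Corollary~\ref{cor:mu_0_on_V_IntFix}(ii)), which says a regular $\PF$-function vanishing on an open subset vanishes on the whole connected neighbourhood. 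Newman enters via the $\ZZZ_p$-action $x\mapsto\AFlow(x,\pfunc(x)/p)$, which is well-defined only because $\pfunc$ has been made regular.

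Your Part~(A) argument has a separate error: $\AFlow(\cdot,\pfunc(x_\ast))$ is a single time-shift of the flow, not a periodic homeomorphism, so Newman's theorem says nothing about it; and even granting $\pfunc(x_\ast)=0$, ``propagating along the path'' runs into the same crossing-$\FixF$ obstruction as above. The paper's Part~(A) uses the same regularity-plus-divisibility mechanism (Corollary~\ref{cor:mu_0_on_V_IntFix}(i)). Finally, properties \ThetaPropPeriod\ and \ThetaPropRegularOpen\ are not as cheap as you suggest: density of $\{\theta=\Per\}$ again requires Lemma~\ref{lm:mu_p_is_regular_PF} (the sets where $\theta/n$ is a $\PF$-function must be shown nowhere dense for $n\geq 2$), and regularity does not follow from Lemma~\ref{lm:init_prop_ShAV} alone since $\orb_x\cap\Vman$ may be disconnected---the paper deduces \ThetaPropRegularOpen\ from \ThetaPropPeriod.
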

Theorem\;\ref{th:description_PF_V} will be proved in \S\ref{sect:proof:th:description_PF_V}.

\subsection{$\PF$-functions for $\Cont{1}$ flows.}
Let $\dif:\Mman\to\Mman$ be a homeomorphism and $\BFlow:\Mman\times\RRR\to\Mman$ be the conjugated flow: 
$$
\BFlow_t(x) = \dif\circ\AFlow_t\circ\dif^{-1}(x) = \dif\circ\AFlow(\dif^{-1}(x),t)).
$$
If $\Vman\subset\Mman$ is an open set and $\theta:\Vman\to\RRR$ a continuous $\PF$-function for $\AFlow$, then $\theta\circ\dif:\dif^{-1}(\Vman) \to \RRR$ is a $\PF$-function for $\BFlow$.
Indeed, let $x\in\Vman$ and $y=\dif^{-1}(x)$.
Then 
$$
\BFlow(y,\theta\circ\dif^{-1}(y)) =
\dif\circ\AFlow(\dif^{-1}(x),\theta\circ\dif^{-1}(x))) =
\dif\circ\dif^{-1}(x)=x.
$$

It follows that the structure of the set of $\PF$-functions of the flow $\AFlow$ depends only on its conjugacy class.

\smallskip 

Now let $\Mman$ be $\Cont{r}$, $(r\geq1)$, manifold and $\AFlow:\Mman\times\RRR\to\Mman$ be a $\Cont{r}$ flow.
Then in general, $\AFlow$ is generated by a $\Cont{r-1}$ vector field
$$
\AFld(x) = \tfrac{\partial\AFlow}{\partial t}(x,t)|_{t=0}.
$$
Nevertheless, it is proved by D.\;Hart\;\cite{Hart:Top:1983} that every $\Cont{r}$ flow $\AFlow$ is $\Cont{r}$-conjugated to a $\Cont{r}$ flow generated by a $\Cont{r}$ vector field.
Thus in order to study $\PF$-functions for $\Cont{r}$ flows we can assume that these flows are generated by $\Cont{r}$ vector fields.

\begin{theorem}\label{th:C1-flows-non-periodic}
Let $\AFlow$ be a $\Cont{1}$ flow on a connected manifold $\Mman$ generated by a $\Cont{1}$ vector field $\AFld$.
Suppose $\PPF{\Vman}=\{n\theta\}_{n\in\ZZZ}$ for some non-negative $\PF$-function $\theta:\Vman\to[0,+\infty)$, see {\rm(1)} of Theorem\;\ref{th:description_PF_V}. 
Then, in fact, $\theta>0$ on all of $\Mman$.

Moreover, for every $z\in\FixF$ there are local coordinates in which the linear part $j^1\AFld(z)$ of $\AFld$ at $z$ is given by the following matrix
\begin{equation}\label{equ:j1Fz_ShA_periodic}
\left(\begin{smallmatrix} 
0 & \beta_1 \\ -\beta_1 & 0  \\
 &  & \cdots \\
 &  &    & 0 & \beta_k \\
 &  &    & -\beta_k & 0 \\
 & & & & & 0 \\
 & & & & & & \cdots 
\end{smallmatrix}\right),
\end{equation}
for some $k\geq1$ and $\beta_j\in\RRR\setminus0$.
\end{theorem}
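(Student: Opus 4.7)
The plan is: (I) propagate $\theta$ from $\Vman$ to all of $\Mman$, (II) derive the block form of $A:=j^{1}\AFld(z)$ at a fixed point $z$ from the functional equation $\AFlow_{\theta(x)}(x)=x$ under the temporary assumption $\theta(z)>0$, and (III) rule out the case $\theta(z)=0$ via Newman's theorem on finite group actions. The last step is the main technical obstacle.

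For (I) I would exploit part (4) of Theorem~\ref{th:description_PF_V}: $\theta$ extends to a $\PF$-function on the open, $\AFlow$-invariant saturation $\Uman=\AFlow(\Vman\times\RRR)$, and $\BFlow(x,t)=\AFlow(x,t\theta(x))$ is a continuous circle action on $\Uman$ sharing orbits with $\AFlow$. By an open/closed argument using the connectedness of $\Mman$, together with continuity of $\theta$ on $\overline{\Uman}$ and the rigid structure of the $\PF$-function group on nearby open sets, the set $\Uman$ exhausts $\Mman$, so $\theta$ becomes a continuous non-negative function defined on the whole manifold.

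Next, fix $z\in\FixF$ and work in a $C^{1}$ chart centred at $z=0$. The $C^{1}$ smoothness of $\AFlow$ gives the expansion $\AFlow_{t}(x)=e^{tA}x+o(|x|)$, locally uniformly in $t$. Choose $x_{n}\in\Vman\setminus\FixF$ with $x_{n}\to 0$, approaching from an arbitrary direction (possible because $\Vman\setminus\FixF$ is dense near $z$ by (1)--(2) of Theorem~\ref{th:description_PF_V}). Continuity of $\theta$ gives $\theta(x_{n})\to\theta(z)$, and subtracting the tautology $\AFlow_{\theta(x_{n})}(0)=0$ from $\AFlow_{\theta(x_{n})}(x_{n})=x_{n}$ yields $(e^{\theta(x_{n})A}-I)\,x_{n}=o(|x_{n}|)$. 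Dividing by $|x_{n}|$ and letting $x_{n}\to 0$ along every direction produces $e^{\theta(z)A}=I$. Provided $\theta(z)>0$, this forces $A$ to be semisimple with purely imaginary eigenvalues lying in $\tfrac{2\pi i}{\theta(z)}\ZZZ$, hence $\RRR$-similar to a block-diagonal matrix made of zero entries and $2\times 2$ rotation blocks $\bigl(\begin{smallmatrix}0&\beta_{j}\\-\beta_{j}&0\end{smallmatrix}\bigr)$ as required by \eqref{equ:j1Fz_ShA_periodic}. That at least one rotation block occurs ($k\geq 1$) is forced by the observation that $A=0$ would give $|\AFld(x)|=o(|x|)$, so a scaling argument would make the periods of periodic orbits accumulating at $z$ diverge, contradicting continuity of $\theta$ at $z$.

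It remains to exclude $\theta(z)=0$. If $\theta(z)=0$ then $\theta(x)\to 0$ and $\Per(x)\leq\theta(x)\to 0$ for periodic points accumulating at $z$. For each integer $k\geq 2$, the map $\varphi_{k}:=\BFlow_{1/k}$ is a homeomorphism of $\Mman$ with $\varphi_{k}^{\,k}=\id$, fixing $\FixF$ pointwise and acting non-trivially on the dense subset where $\theta=\Per$ (Theorem~\ref{th:description_PF_V}(2)); hence $\ZZZ/k$ acts effectively on the connected manifold $\Mman$. Applying Newman's theorem in the localized form of Proposition~\ref{pr:mu_in_kerA_vanish} (the correction of \cite[Pr.\;10]{Maks:TA:2003}) produces a lower bound, uniform in $k$, for the diameter of some $\varphi_{k}$-orbit near $z$. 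On the other hand, every $\varphi_{k}$-orbit through $x$ is contained in the $\AFlow$-orbit through $x$, whose diameter is at most $\Per(x)\cdot\sup|\AFld|$ near $x$ and tends to $0$ as $x\to z$ because both $\Per(x)\to 0$ and $|\AFld(x)|\to 0$. Comparing the two estimates yields the required contradiction. The main obstacle is precisely this localized Newman step: the classical theorem only guarantees a large orbit somewhere in $\Mman$, not specifically near $z$, so the sharpening of~\cite[Pr.\;10]{Maks:TA:2003} encoded in Proposition~\ref{pr:mu_in_kerA_vanish} is exactly what is needed to close the argument.
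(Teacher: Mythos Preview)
Your overall strategy is sound and runs parallel to the paper's, but with a different split. The paper does everything through Proposition~\ref{pr:mu_in_kerA_vanish}: it shows $Y=\theta^{-1}(0)$ is open and closed (using case~(e) at fixed points), hence empty; then the block form~\eqref{equ:j1Fz_ShA_periodic} follows because any failure of (a), (b), or (c) in that proposition would force $\theta(z)=0$. You instead deduce $e^{\theta(z)A}=I$ directly from the $C^{1}$ expansion of the flow and read off the block form --- this is exactly the shortcut of Remark~\ref{rem:proof:th:C1-flows-non-periodic:theta_C1}, and it is a legitimate alternative once $\theta(z)>0$ is secured.

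Two points need tightening. First, step~(I) is both unnecessary and unjustified: the paper simply takes $V=M$ (the appearance of $V$ in the statement is a notational slip), and your claim that the saturation $\Uman$ ``exhausts $M$'' by an open/closed argument is not established --- $\Uman$ is open and invariant, but nothing you wrote makes it closed. Second, and more seriously, your argument in~(II) that $k\geq 1$ follows from a ``scaling argument'' is incomplete. With $A=0$ and bounded periods, scaling (i.e.\ $|\AFld(y)|=o(|y|)$ together with $\ell(o_x)\leq \Per(x)\cdot\sup_{o_x}|\AFld|$) only yields $\diam(o_x)=o(|x|)$; it does \emph{not} by itself force periods to diverge. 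To reach a contradiction you need a lower bound $\diam(o_{x_i})\gtrsim |x_i|$ along some sequence, and that is precisely condition~$(E)$, supplied by the Dress/Hoffman--Mann localization of Newman's theorem (Lemma~\ref{lm:HoffmanMann:diameters_of_orbits} via Lemma~\ref{lm:prop_h_mu_A_BC_D}, then Proposition~\ref{prop:cond_E_for_C1_vf}$(e_2)$). This is the very warning issued in Remark~\ref{rem:proof:th:C1-flows-non-periodic:theta_C1}. In short, the Newman input you correctly invoke in~(III) to exclude $\theta(z)=0$ is needed again to exclude $A=0$; once you acknowledge that both exclusions rest on the same localized Newman estimate, your proof is complete and equivalent in content to the paper's.
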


We will prove this theorem in \S\ref{sect:proof:th:C1-flows-non-periodic}.

\begin{remark}\label{rem:proof:th:C1-flows-non-periodic:theta_C1}\rm
Under assumptions of Theorem\;\ref{th:C1-flows-non-periodic} suppose that $\theta$ is $\Cont{1}$ and $\theta(z)\not=0$ for some $z\in\FixF$.
In this case existence of\;\eqref{equ:j1Fz_ShA_periodic} at $z$ is easy to prove, c.f.\;\cite{Maks:reparam-sh-map}.

Indeed, define the flow $\BFlow:\Mman\times\RRR\to\Mman$ by $\BFlow(x,t)=\AFlow(x,t\,\theta(x))$.
Then $\BFlow$ is generated by the $\Cont{1}$ vector field $\BFld=\theta\AFld$.
Moreover, $\BFlow_{1}=\id_{\Mman}$, whence $\BFlow$ yields an $\RRR/\ZZZ=S^1$-action on $\Mman$.

Let $z\in\FixF$.
Then $\BFlow_t(z)=z$, whence $\BFlow$ yields a linear $S^1$-action $T_z\BFlow_t$ on the tangent space $T_z\Mman$.
Now it follows from standard results about $S^1$ representations in $GL(\RRR,n)$ that the linear part of $\BFld$ at $z$ in some local coordinates is given by\;\eqref{equ:j1Fz_ShA_periodic}.
It remains to note that $j^1\AFld(z) = j^1\BFld(z)/\theta(z)$.
Notice that these arguments \myemph{do not prove that the matrix\;\eqref{equ:j1Fz_ShA_periodic} is non-zero.}
\end{remark}

\subsection{Structure of the paper}
In next section we will consider examples illustrating necessity of assumptions of Theorems\;\ref{th:description_PF_V} and\;\ref{th:C1-flows-non-periodic}, and review applications of $\PF$-function to circle actions.

Then in \S\ref{sect:prop_P_func} we describe certain properties of $\PF$-function for continuous flows: local uniqueness, local regularity, and continuity of extensions of regular $\PF$-functions.
We also deduce from well-known M.\;Newman's theorem a sufficient condition for divisibility of regular $\PF$-functions by integers in $\PPF{\Vman}$.
These results will be used in \S\ref{sect:proof:th:description_PF_V} for the proof of Theorem\;\ref{th:description_PF_V}.
\S\ref{sect:diam_orb_lengths} presents a variant of results of M.\;Newman, A.\;Dress, D.\;Hoff\-man, and L.\;N.\;Mann about lower bounds for diameters of orbits of $\ZZZ_p$-actions on manifolds.

\S\ref{sect:Pfunc_nonfixed_pt_set}-\S\ref{sect:unboundedness_of_periods} give sufficient conditions for unboundedness of periods of $\Cont{0}$ and $\Cont{1}$ flows near singular points.
Finally \S\ref{sect:proof:th:C1-flows-non-periodic} contains the proof of Theorem\;\ref{th:C1-flows-non-periodic}.

\section{$\PF$-functions and circle actions}
The following simple statement shows applications of $\PF$-functions to reparametrizations of flows to circle actions.
\begin{lemma}\label{lm:RPF_S1-action}
Let $\theta:\Mman\to\RRR$ be a $\PF$-function on all of $\Mman$.
Then the following map 
\begin{equation}\label{equ:A_x_t_theta}
\BFlow:\Mman\times\RRR\to\Mman,
\qquad
\BFlow(x,t)=\AFlow(x,t\cdot\theta(x))
\end{equation}
is a flow on $\Mman$ such that $\BFlow_{1}=\id_{\Mman}$, so $\BFlow$ factors to a circle action.
\end{lemma}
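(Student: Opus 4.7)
\medskip

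\noindent\textbf{Proof plan for Lemma \ref{lm:RPF_S1-action}.}
The plan is to verify the four defining properties of a flow for the map $\BFlow$ in\;\eqref{equ:A_x_t_theta}, namely continuity, the identity $\BFlow(x,0)=x$, the group law $\BFlow(x,t+s)=\BFlow(\BFlow(x,t),s)$, and the relation $\BFlow(x,1)=x$ needed to factor through $S^1=\RRR/\ZZZ$. Continuity of $\BFlow$ is immediate from continuity of $\AFlow$ and $\theta$ together with continuity of multiplication on $\RRR$; and $\BFlow(x,0)=\AFlow(x,0)=x$ is obvious. The identity $\BFlow(x,1)=\AFlow(x,\theta(x))=x$ is exactly the $\PF$-function condition from Definition\;\ref{defn:P-func}, so $\BFlow_1=\id_\Mman$.

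The only non-trivial point is the group law, and this is where regularity of $\theta$ is crucial. Setting $y=\BFlow(x,t)=\AFlow(x,t\theta(x))$, one computes
\[
\BFlow(\BFlow(x,t),s)=\AFlow\bigl(y,\,s\theta(y)\bigr)=\AFlow\bigl(x,\,t\theta(x)+s\theta(y)\bigr),
\]
while $\BFlow(x,t+s)=\AFlow(x,(t+s)\theta(x))$. Equality of these two values for \emph{all} $s\in\RRR$ forces $\theta(y)=\theta(x)$, i.e.\ $\theta$ must take the same value at $x$ and at the point $\AFlow(x,t\theta(x))$ on the orbit of $x$. Conversely, if $\theta(y)=\theta(x)$ the group law holds.

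Thus the key step is to show $\theta$ is \emph{regular} in the sense of Definition\;\ref{defn:regular_P_func}. Since the domain of $\theta$ is all of $\Mman$, which is tautologically $\AFlow$-invariant, each intersection $\orb_x\cap\Mman=\orb_x$ is path-connected, so Remark\;\ref{rem:o_cap_V_connected_mu_is_regular} applies and gives regularity directly (alternatively, one invokes Lemma\;\ref{lm:init_prop_ShAV} componentwise on each connected component of $\Mman$). With regularity in hand, $\theta(\AFlow(x,t\theta(x)))=\theta(x)$, the chain above closes, and the group law holds. I expect no real obstacle here since once Remark\;\ref{rem:o_cap_V_connected_mu_is_regular} is cited the remaining verification is a one-line calculation; the lemma is essentially a formal consequence of the results already established.
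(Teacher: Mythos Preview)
Your proof is correct and follows the same direct-verification approach as the paper, which in fact records only the one-line check $\BFlow_{1}(x)=\AFlow(x,\theta(x))=x$ and leaves the rest to the reader. You supply the details the paper suppresses, in particular the group law via regularity of $\theta$ from Remark~\ref{rem:o_cap_V_connected_mu_is_regular}, and all of this is sound.
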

\begin{proof}
Indeed, $\BFlow_{1}(x) = \AFlow(x,\theta(x))=x$.
\end{proof}

Suppose that there exists a circle action whose orbits coincide with ones of $\AFlow$.
Then every orbit of $\AFlow$ is either periodic or fixed.
Moreover, due to the following well-known theorem of M.\;Newman the set $\FixF$ should be nowhere dense:
\begin{theorem}[M.\;Newman \cite{Newman:QJM:1931}, see also\;\cite{Smith:AM:1941,MontgomerySamelsonZippin:AnnM:1956,Dress:Topol:1969}]
\label{th:Newman_th}
If a compact Lie group effectively acts on a connected manifold $\Mman$, then the set $\FixF$ of fixed points of this action is nowhere dense in $\Mman$ and, by \cite{MontgomerySamelsonZippin:AnnM:1956}, it does not separate $\Mman$.
\end{theorem}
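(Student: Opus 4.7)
The plan is to prove both assertions by reducing to $\ZZZ_p$-actions and invoking Newman's diameter lemma for the nowhere density, and the Montgomery-Samelson-Zippin codimension estimate for the non-separation.

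The first step would be to note that any nontrivial compact Lie group $G$ contains a closed subgroup isomorphic to $\ZZZ_p$ for some prime $p$ (taking an element of prime order when $G$ is finite, and a $\ZZZ_p \subset S^1 \subset G$ otherwise). Since effectiveness of $G$ forces effectiveness of this subgroup, and $\FixF \subseteq \Sigma_p := \Fix(\ZZZ_p)$, it suffices to show $\Sigma_p$ has empty interior. I would assume the contrary, pick a component $U$ of $\mathrm{Int}(\Sigma_p)$, note that $U \neq \Mman$ by effectiveness, and choose a point $x \in \partial U$. In a chart $\phi : W \to \RRR^n$ around $x$ one can select a small closed topological disk $D \subset W$ with $\partial D \subset U$ but $\mathrm{Int}(D)$ meeting $\Mman \setminus \Sigma_p$; replacing $W$ by the invariant open neighborhood $\bigcap_{g \in \ZZZ_p} g(W)$ of $D$, the finiteness of $\ZZZ_p$ and equicontinuity of its elements on compacta force all orbits in a neighborhood of $D$ to have arbitrarily small diameter as $D$ shrinks. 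This is designed to contradict Newman's lemma: for every topologically embedded closed $n$-disk $D \subset \Mman$ there is $\eps(D) > 0$ such that no continuous effective action of a nontrivial finite group on a neighborhood of $D$ pointwise fixing a neighborhood of $\partial D$ can have all orbits of diameter less than $\eps(D)$.

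For the non-separation claim I would pass first to the identity component $G^{0}$ of $G$. When $\dim G^{0} > 0$, the Montgomery-Samelson-Zippin theorem \cite{MontgomerySamelsonZippin:AnnM:1956} gives that $\Sigma^{0} := \Fix(G^{0})$ has codimension at least two in $\Mman$, so $\Mman \setminus \Sigma^{0}$ is connected; combined with nowhere density of $\Sigma^{0}$ (from the first part applied to $G^{0}$) and $\FixF \subseteq \Sigma^{0}$, each point of $\Sigma^{0} \setminus \FixF$ is a limit of points in $\Mman \setminus \Sigma^{0}$, making $\Mman \setminus \FixF$ connected. When $G$ is finite I would appeal to the analogous codimension-two bound for $\Fix(\ZZZ_p)$ with $p$ odd via Smith theory, and handle residual $\ZZZ_2$-factors through orientation/double-cover arguments ruling out the fixed-point set as a separating hypersurface in a connected manifold.

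The main obstacle is Newman's lemma itself, whose proof is cohomological: one uses the Smith exact sequence with $\ZZZ_p$-coefficients relating $H^{*}(D;\ZZZ_p)$, $H^{*}(D \cap \Sigma_p; \ZZZ_p)$, and $H^{*}(D/\ZZZ_p; \ZZZ_p)$, together with the non-vanishing of the fundamental class of $(D, \partial D)$, to show that sufficiently small orbits force the fixed set to absorb the whole top cohomology, contradicting the presence of a non-fixed interior point. The reductions to $\ZZZ_p$ and to invariant neighborhoods are routine, but the handling of finite $2$-groups in the non-separation argument and the precise cohomological formulation of Newman's lemma each require careful setup.
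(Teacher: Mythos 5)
The paper does not prove this statement: it is quoted as a classical theorem, with the nowhere-density part attributed to Newman and the non-separation part to Montgomery--Samelson--Zippin, and the paper only ever uses the nowhere-density part (in Lemma~\ref{lm:mu_p_is_regular_PF}, in the contrapositive form ``a $\ZZZ_p$-action fixing a non-empty open set is trivial''). The closest the paper comes to proof material is \S\ref{sect:effect_Zp_actions}, where Dress's quantitative refinement (Lemma~\ref{lm:Dress:Topol:1969:Lm_3}, the inequality $D(U)<C(U)$) is again recalled without proof and used for a different purpose. So there is no internal proof to compare yours against; what you have written is an outline of the standard external proof.

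As such an outline, two things should be flagged. First, for the nowhere-density claim your reduction to a $\ZZZ_p$-subgroup and the shrinking-disk contradiction are the standard moves, but the entire mathematical content sits in ``Newman's lemma'' (uniformly small orbits force triviality), which you explicitly leave unproved, only describing its Smith-theoretic proof strategy; so the proposal is a roadmap rather than a proof. Second, and more seriously, your plan for the non-separation clause in the case of finite groups cannot be completed, because that clause is false in that generality: a reflection of $\RRR^{n}$ is an effective action of the compact Lie group $\ZZZ_2$ on a connected manifold whose fixed-point set is a separating hyperplane of codimension one. Hence no ``orientation/double-cover argument'' can rule out a separating fixed-point set for residual $\ZZZ_2$-factors; the Montgomery--Samelson--Zippin non-separation theorem concerns compact \emph{connected} groups, and the clause must be read with that restriction (the paper never invokes it, so nothing downstream is affected). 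Your codimension-two claim via Smith theory is correct for odd primes but does not cover the one case that actually causes trouble.
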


Suppose now $\FixF=\varnothing$, so all points of $\AFlow$ are periodic.
It will be convenient to call $\AFlow$ a \myemph{$P$-flow}.
Then we have a well-defined function 
$$\lambda:\Mman\to(0,+\infty), \qquad \lambda(x)=\Per(x).$$
This function was studied by many authors.
It can be shown that $\lambda$ is lower semicontinuous and the set $B$ of its continuity points is open in $\Mman$, see 
e.g. D.\;Montgomery\;\cite{Montgomery:AJM:1937} and D.\;B.\;A.\;Epstein\;\cite[\S5]{Epstein:AnnMath:1972}.
Thus in the sense of Definition\;\ref{defn:P-func} $\lambda$ is a $\PF$-function on $B$.

There are certain typical situations in which $\lambda$ is discontinuous.

For instance, if $\lambda$ is locally unbounded, then it can not be continuously extended to all $\Mman$.
Say that a $P$-flow $\AFlow$ has property $PB$ (resp. property $PU$) if $\lambda$ is locally bounded (resp. locally unbounded).
Equivalently, if $\AFlow$ is at least $\Cont{1}$, then instead of periods one can consider lengths of orbits with respect to some Riemannian metric on $\Mman$.

It seems that the first example of a $PU$-flow was constructed by G.\;Reeb\;\cite{Reeb:ASI:1952}.
He produced a $\Cont{\infty}$ $PU
$-flow on a non-compact manifold.
Further D.\;B.\;A.\;Epstein\;\cite{Epstein:AnnMath:1972} constructed a \myemph{real analytic} $PU$-flow on a non-compact $3$-manifold, D.\;Sullivan\;\cite{Sullivan:BAMS:1976, Sullivan:PMIHES:1976} a $\Cont{\infty}$ $PU$-flow on a \myemph{compact} $5$-manifold $S^3\times S^1\times S^1$, and 
D.\;B.\;A.\;Epstein and E.\;Vogt\;\cite{EpsteinVogt:AnnMath:1978} a $PU$-flow on a compact $4$-manifold defined by polynomial equations, with the vector field defining the flow given by polynomials, see also E.\;Vogt\;\cite{Vogt:ManuscrMath:1977}.

On the other hand, the following well-known example of Seifert fibrations shows that even if $\lambda$ is discontinuous, then in some cases it can be continuously extended to all of $\Mman$ so that the obtained function is a $\PF$-function.
\begin{example}\label{exmp:Seifert_fibration}\rm
Let $D^2 \subset\CCC$ be the closed unit $2$-disk centered at the origin, $S^1=\partial D^2$ be the unit circle, and $T=D^2\times S^1$ be the solid torus.
Fix $n\geq2$ and define the following flow on $T$:
$$\AFlow:T\times\RRR\to T,
\qquad
\AFlow(z,\tau,t) = (z e^{2\pi i t /k}, \tau e^{2\pi i t}),
$$
for $(z,\tau,t)\in D^2\times S^1 \times \RRR$.
It is easy to see that every $(z,\tau)\in T$ is periodic.
Moreover, $\Per(z,\tau)=k$ if $z\not=0\in D^2$, while $\Per(0,\tau)=1$.
Thus the function $\Per:T^2\to\RRR$ is discontinuous on the central orbit $0\times S^1$, but it becomes even smooth if we redefine it on $0\times S^1$ by the value $k$ instead of $1$.
This new constant function $\theta\equiv k$ is a regular $\PF$-function and $\PPF{T} = \{nk\}_{n\in\ZZZ}$.
\end{example}

Notice that in this example $\AFlow$ is a suspension flow of a periodic homeomorphism $\dif:D^2\to D^2$ being a rotation by $2\pi/k$.

More generally, let $\dif:\Mman\to\Mman$ be a homeomorphism of a connected manifold $\Mman$ such that the corresponding suspension flow $\AFlow$ of $\dif$ on $\Mman\times S^1$ is a $P$-flow.
This is possible if and only if all the points of $\Mman$ are periodic with respect to $\dif$.
D.\;Montgomery\;\cite{Montgomery:AJM:1937} shown that such a homeomorphism is periodic itself.
Let $k$ be the period of $\dif$.
Then the periods of orbits of $\AFlow$ are bounded with $k$, so $\AFlow$ is a $PB$-flow.
Moreover, similarly to Example\;\ref{exmp:Seifert_fibration}, it can be shown that $\PPF{\Mman}=\{nk\}_{n\in\ZZZ}$.

D.\;B.\;A.\;Epstein\;\cite{Epstein:AnnMath:1972} also proved that if $\Mman$ is a compact orientable $3$-manifold then any $\Cont{r}$ $P$-flow with $(1\leq r \leq \omega)$ has property $BP$ and even there exists a $\Cont{r}$ circle action with the same orbits.
In fact he shown that the structure of $\Cont{r}$ foliations $(1\leq r\leq \infty)$ of compact orientable $3$-manifolds, possibly with boundary, is similar to Seifert fibrations described in Example\;\ref{exmp:Seifert_fibration}.

The problem of bounded periods has its counterpart for foliations with all leaves compact.
The question is whether the volumes of leaves are locally bounded with respect to some Riemannian metric, see e.g.\;\cite{Epstein:AIF:1976,EdwardsKennethSullivan:Top:1977, Muller:EnsMath:1979}.
For instance the mentioned above statements for flows can be adopted for foliations.

Let us mention only one result which is relevant with regular $\PF$-functions.

Let $\mathcal{F}$ be a foliation on a manifold $\Mman$ such that all leaves of $\mathcal{F}$ are closed manifolds of dimension $d$.
Then it is shown in R.\;Edwards, K.\;Millett and D.\;Sullivan\;\cite{EdwardsKennethSullivan:Top:1977} that \myemph{the volumes of all leaves are locally bounded whenever there exists a $d$-form $\omega$ such that its integral over any leaf is positive}.
In the case of flows such a $1$-form often appeared for reparametrizations of flows to circle actions.

For instance, W.\;M.\;Boothby \& H.\;C.\;Wang\;\cite{BoothbyWang:AnnMath:1958} considered a contact manifold $\Mman$ with a contact form $\omega$ being \myemph{regular} in the sense of R.\;Palais\;\cite{Palais:MAMS:1957}: let $\AFld$ be the dual vector field for $\omega$; then for each $x\in\Mman$ there exists a neighbourhood $\Uman$ such that $\orb_y\cap\Uman$ is connected for every $y\in\Uman$.
Under this assumption it is shown in\;\cite{BoothbyWang:AnnMath:1958} that $\AFld$ can be reparametrized to a circle action.
Such a connectivity condition is similar to our regularity of $\PF$-functions, see Remark\;\ref{rem:o_cap_V_connected_mu_is_regular}.

Moreover, suppose $\Mman$ is a $\Cont{r}$ manifold with $(3\leq r\leq\infty)$ and $\AFlow$ be a $\Cont{r}$ $P$-flow on $\Mman$.
A.\;W.\;Wadsley\;\cite{Wadsley:JDG:1975}, using a $1$-form with positive integrals along orbits, proved that \myemph{the existence of a $\Cont{r}$ circle action with the same orbits is equivalent to the existence a Riemannian metric on $\Mman$ in which all orbits are geodesic.}
The proof of sufficiency of this result explicitly used regular $\PF$-functions, c.f.\;\cite[Lm.\;4.2]{Wadsley:JDG:1975}.

\smallskip 

Suppose now that $\AFlow$ has no non-closed orbits.
Then it can happen that $\PPF{\Mman\setminus\FixF}=\{n\theta\}_{n\in\ZZZ}$ for some $\PF$-function $\theta:\Mman\setminus\FixF\to\RRR$, while $\PPF{\Mman}=\{0\}$, so $\theta$ can not be continuously extended to all of $\Mman$.

For instance, let $\AFlow:\CCC\times\RRR\to\CCC$ be a $\Cont{\infty}$ flow on $\CCC$ defined by
$$
\AFlow(z,t) = 
\begin{cases}
e^{2\pi i \, t\, |z|^2} \, z, & z\not=\orig, \\
\orig, & z=\orig,
\end{cases}
$$
where $\orig$ is the origin.
Then $\theta=\frac{1}{|z|^2}$ is a $\Cont{\infty}$ $\PF$-function on $\CCC\setminus\orig$ and $\PPF{\CCC\setminus\orig}=\{n\theta\}_{n\in\ZZZ}$.
On the other hand, $\lim\limits_{z\to\orig}\theta(z)=+\infty$, whence $\theta$ can not be extended even to a continuous function on $\CCC$, whence $\PPF{\CCC}=\{0\}$.

We will show that for $\Cont{1}$ flows triviality of $\PPF{\Mman}$ and non-triviality of $\PPF{\Mman\setminus\FixF}$ ``almost always'' appears due to unboundedness of periods of points near $\FixF$, see Theorem\;\ref{th:periods_to_infinity}.

\subsection{Non-regular $\PF$-functions}
The following example shows that on non-open or disconnected sets $\Vman\subset\Mman$ there may exist non-regular $\PF$-functions.
Moreover, it also shows that $\PF$-functions for continuous flows may vanish at fixed points.

\begin{example}\label{exmp:nonreg-P-function}\rm
Let $\AFlow:\CCC\times\RRR\to\CCC$ be a continuous flow on the complex plane $\CCC$ defined by
$$
\AFlow(z,t) = 
\begin{cases}
e^{2\pi i \, t/ |z|^2} \, z, & z\not=\orig, \\
\orig, & z=\orig.
\end{cases}
$$
The orbits of $\AFlow$ are the origin $\orig\in\CCC$ and the concentric circles centered at $\orig$.
Then $\theta=|z|^2$ is a $\PF$-function on $\CCC$ and 
$$\RPF{\CCC} = \PPF{\CCC}=\{n\theta\}_{n\in\ZZZ}.$$

Also notice that $\theta$ is $\Cont{\infty}$ and $\theta(\orig)=0$.
If $\AFlow$ were at least $\Cont{1}$, then due to Theorem\;\ref{th:C1-flows-non-periodic}, $\theta$ would not vanish at $\orig$.

Let $\Vman_i$, $(i=1,2,3)$ be the corresponding subset in $\CCC$ shown in Figure\;\ref{fig:V_123}.
Thus $\Vman_1$ is an open segment, say $(-1,1)$, on the real axis, 
$\Vman_2$ is a union of two closed triangles with common vertex at the origin $\orig$, 
and $\Vman_3$ is union of a triangle with a segment $(-1,0]$ of the real axis intersecting at the origin.
\begin{center}
\begin{figure}[ht]
\includegraphics[height=1.5cm]{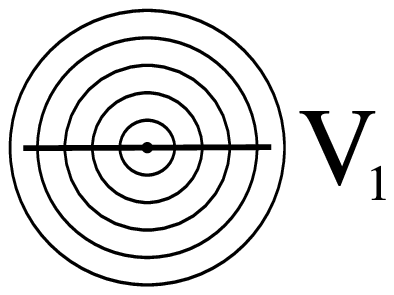}
\qquad
\includegraphics[height=1.5cm]{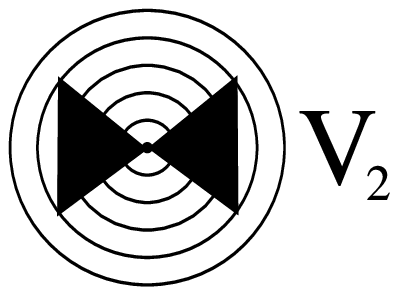}
\qquad
\includegraphics[height=1.5cm]{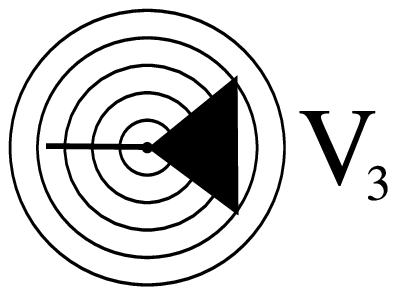}
\label{fig:V_123}
\caption{}
\end{figure}
\end{center}
In particular, $\Int{\Vman_1}=\varnothing$,  $\Int{\Vman_2}$ is not connected, and $\Int{\Vman_3}$ is not dense in $\Vman$.
For any pair $m,n\in\ZZZ$ define the function $\pfunc_{m,n}:\Vman_i\to\RRR$ by
$$
\pfunc_{m,n}(x) =
\begin{cases}
  -m |z|, & \Re(z)\leq 0, \\
  n |z|, & \Re(z)>0.
\end{cases}
$$
Evidently, $\PPF{\Vman_i}=\{\pfunc_{m,n}\}_{m,n\in\ZZZ}$, while $\RPF{\Vman_i}=\{\pfunc_{m,m}\}_{m\in\ZZZ}$.
Thus not every $\PF$-function is regular.
\end{example}

\section{Properties of $\PF$-functions}\label{sect:prop_P_func}

\begin{lemma}\label{lm:small_periods_x_in_FixF}
Let $z\in\Mman$.
Suppose there exists a sequence of periodic points $\{x_i\}_{i\in\NNN}$ converging to $z$ and such that $\lim\limits_{i\to\infty}\Per(x_i)=0$.
Then $z\in\FixF$.
\end{lemma}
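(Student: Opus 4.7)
The plan is to use joint continuity of the flow together with the fact that flow maps based at a periodic point can be reduced modulo the period. Specifically, I want to show that $\AFlow(z,t)=z$ for every $t\in\RRR$.

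First I would fix an arbitrary $t\in\RRR$. For each $i$, set $p_i = \Per(x_i) > 0$ (these are positive since points with period $0$ are fixed, so if some $x_i$ is fixed we trivially get $\AFlow(x_i,t)=x_i$ for all $t$, and the argument below goes through even more easily). Writing $t = n_i p_i + r_i$ with $n_i \in \ZZZ$ and $r_i \in [0, p_i)$, the defining relation $\AFlow(x_i, p_i)=x_i$ together with the flow property yields
\[
\AFlow(x_i, t) \;=\; \AFlow(x_i, r_i).
\]
Since $0 \le r_i < p_i \to 0$, we have $r_i \to 0$, and by hypothesis $x_i \to z$.

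Next I would invoke joint continuity of $\AFlow:\Mman\times\RRR\to\Mman$ twice. On the one hand, continuity at $(z, t)$ gives
\[
\AFlow(x_i, t) \;\longrightarrow\; \AFlow(z, t).
\]
On the other hand, continuity at $(z, 0)$ gives
\[
\AFlow(x_i, r_i) \;\longrightarrow\; \AFlow(z, 0) \;=\; z.
\]
Comparing the two limits forces $\AFlow(z, t) = z$. Since $t$ was arbitrary, $z \in \FixF$.

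There is no real obstacle here; the only subtle point is that joint continuity of the flow (not merely continuity in each variable separately) is what allows us to pass to the limit simultaneously in the basepoint and the time parameter. This is built into the definition of a flow as a continuous map $\Mman\times\RRR\to\Mman$, so the argument is complete.
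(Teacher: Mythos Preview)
Your proof is correct. The paper argues by contradiction: assuming $z\notin\FixF$, it picks $\tau>0$ with $\AFlow_\tau(z)\neq z$, separates $z$ from $\AFlow_\tau(z)$ by a neighbourhood $\Uman$ with $\Uman\cap\AFlow_\tau(\Uman)=\varnothing$, and then observes that for $i$ large the whole orbit $\orb_{x_i}=\AFlow(x_i,[0,\Per(x_i)])$ lies in $\Uman$ (because $\Per(x_i)$ is small and the flow is jointly continuous), forcing $\AFlow_\tau(x_i)\in\Uman\cap\AFlow_\tau(\Uman)$. Your argument is a direct version of the same idea: instead of trapping the orbit in a neighbourhood, you use the identity $\AFlow(x_i,t)=\AFlow(x_i,r_i)$ with $r_i\in[0,\Per(x_i))\to 0$ and pass to the limit on both sides. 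Both proofs ultimately hinge on joint continuity of $\AFlow$; yours is a bit more streamlined and shows explicitly that $\AFlow(z,t)=z$ for every $t$, while the paper's version emphasises the geometric picture that the orbits $\orb_{x_i}$ collapse to $z$.
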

\begin{proof}
Suppose $z\not\in\FixF$, so there exists $\tau>0$ such that $z\not=\AFlow_{\tau}(z)$.
Let $\Uman$ be a neighbourhood of $z$ such that 
\begin{equation}\label{equ:Atx_notin_oW}
\Uman \cap \AFlow_{\tau}(\Uman) = \varnothing.
\end{equation}
Since $\AFlow(x,0)=x$, there exists $\eps>0$ and a neighbourhood $\Wman$ of $x$ such that $\AFlow(\Wman\times[0,\eps])\subset\Uman$.
Then we can find $x_i\in\Wman$ with $\Per(x_i)<\eps$.
Hence 
$$\AFlow_{\tau}(x_i)  \in \AFlow_{\tau}(\Uman).$$
On the other hand,
$$\AFlow_{\tau}(x_i) \ \in \ \orb_{x_i} \ = \ \AFlow(x_i,[0,\Per(x_i)]) \ \subset \ \AFlow(\Wman\times[0,\eps]) \ \subset \ \Uman,$$
which contradicts to\;\eqref{equ:Atx_notin_oW}.
\end{proof}

\begin{lemma}[Local uniqueness of $\PF$-functions]\label{lm:local_uniq_sh_func}{\rm c.f.\;\cite[Cor.\;8]{Maks:TA:2003}}
Let $\Vman\subset\Mman$ be \myemph{any subset}, $z\in\Vman\setminus\FixF$ and $\pfunc\in\PPF{\Vman}$.
If $\pfunc(z)=0$, then $\pfunc=0$ on some neighbourhood of $z$ in $\Vman$.
\end{lemma}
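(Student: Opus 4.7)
The plan is to argue by contradiction using Lemma\;\ref{lm:small_periods_x_in_FixF}. Suppose, contrary to the claim, that $\pfunc$ does not vanish on any neighbourhood of $z$ in $\Vman$. Then there exists a sequence $\{x_i\}_{i\in\NNN}\subset\Vman$ converging to $z$ with $\pfunc(x_i)\neq 0$ for every $i$.

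First I would observe that each such $x_i$ is periodic. Indeed, by definition of a $\PF$-function $\AFlow(x_i,\pfunc(x_i))=x_i$, and $\pfunc(x_i)\neq 0$ means $\pfunc(x_i)$ lies in the isotropy subgroup
\[
\mathrm{Stab}(x_i)=\{t\in\RRR \ :\ \AFlow_t(x_i)=x_i\}.
\]
Being a non-trivial closed subgroup of $\RRR$ distinct from $\RRR$ itself (otherwise $x_i\in\FixF$ and then $\pfunc(x_i)$ could still be non-zero, but then $x_i$ is periodic of period $0$ by convention; the case $\mathrm{Stab}(x_i)=\RRR$ would still give $x_i$ periodic in the weak sense, yet handle this by replacing such $x_i$'s in the sequence if needed), it has the form $\Per(x_i)\ZZZ$ with $\Per(x_i)>0$. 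In particular $|\pfunc(x_i)|\geq \Per(x_i)$.

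Next, by continuity of $\pfunc$ at $z$ we have $\pfunc(x_i)\to\pfunc(z)=0$, and therefore
\[
0 \;\leq\; \Per(x_i) \;\leq\; |\pfunc(x_i)| \;\xrightarrow[i\to\infty]{}\; 0.
\]
Thus $\{x_i\}$ is a sequence of periodic points converging to $z$ with $\Per(x_i)\to 0$, and Lemma\;\ref{lm:small_periods_x_in_FixF} forces $z\in\FixF$, contradicting the hypothesis $z\in\Vman\setminus\FixF$.

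The only genuinely delicate point is the bound $|\pfunc(x_i)|\geq\Per(x_i)$: one must rule out that $\pfunc(x_i)$ is a non-zero value at a fixed point $x_i\in\FixF$ (in which case no period is defined and the inequality is vacuous). If infinitely many $x_i$ were fixed, pick such a subsequence; then $\AFlow_t(x_i)=x_i$ for all $t$, so $x_i$ may be regarded as periodic of arbitrarily small ``period'' and Lemma\;\ref{lm:small_periods_x_in_FixF} applies directly to give $z\in\FixF$ as well. Hence in every case we reach a contradiction, and $\pfunc$ must vanish on a neighbourhood of $z$ in $\Vman$.
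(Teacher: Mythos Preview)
Your argument is correct and follows the same route as the paper: assume a sequence $x_i\to z$ with $\pfunc(x_i)\neq 0$, deduce $\Per(x_i)\to 0$ from $|\pfunc(x_i)|\geq\Per(x_i)$, and invoke Lemma~\ref{lm:small_periods_x_in_FixF} to force $z\in\FixF$. The only difference is cosmetic: since $\FixF$ is closed and $z\notin\FixF$, all but finitely many $x_i$ lie in $\Vman\setminus\FixF$ automatically, so your separate treatment of the case $x_i\in\FixF$ (and the somewhat awkward discussion of ``period $0$'') is unnecessary---the paper simply takes the sequence in $\Vman\setminus\FixF$ from the start and cites Lemma~\ref{lm:init_prop_ShAV} for $\pfunc(x_i)=n_i\Per(x_i)$ with $n_i\neq 0$.
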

\begin{proof}
Suppose $\pfunc$ is not identically zero on any neighbourhood of $z$ in $\Vman\setminus\FixF$.
Then there exists a sequence $\{x_i\}_{i\in\NNN}\subset\Vman\setminus\FixF$ converging to $z$ and such that $\pfunc(x_i)\not=0$.
Hence every $x_i$ is periodic and $\pfunc(x_i)=n_i\Per(x_i)$ for some $n_i\in\ZZZ\setminus\{0\}$.
By continuity of $\pfunc$ we get
$$
0 =
\lim\limits_{i\to\infty}\pfunc(x_i) = 
\lim\limits_{i\to\infty}n_i\Per(x_i). 
$$
Since $|n_i|\geq1$, it follows that $\lim\limits_{i\to\infty}\Per(x_i)=0$, whence by Lemma\;\ref{lm:small_periods_x_in_FixF} $z\in\FixF$, which contradicts to the assumption.
\end{proof}

\begin{lemma}[Local regularity of $\PF$-functions on open sets]\label{lm:local_regularity_P_func}
Let $\Vman\subset\Mman$ be an open subset and $\pfunc\in\PPF{\Vman}$.
Then for each $z\in\Vman$ there exists a neighbourhood $\Wman\subset\Vman$ such that the restriction $\pfunc|_{\Wman}$ is regular.
\end{lemma}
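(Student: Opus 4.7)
The plan is to argue by contradiction, combining continuity of $\pfunc$ at $z$ with joint continuity of the flow $\AFlow$ at $(z,0)$.

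I would first suppose that $\pfunc|_{\Wman}$ is non-regular for every neighborhood $\Wman$ of $z$ inside $\Vman$. Choosing a countable local base $\{\Wman_n\}$ at $z$ with $\Wman_n \subset \Vman$ and $\bigcap_n \Wman_n = \{z\}$, for each $n$ there exist $x_n, y_n \in \Wman_n$ on a common orbit of $\AFlow$ with $\pfunc(x_n) \neq \pfunc(y_n)$. By Lemma \ref{lm:init_prop_ShAV}, $\pfunc$ vanishes on any non-periodic orbit inside $\Vman$, so each $x_n$ must be periodic. Writing $y_n = \AFlow_{s_n}(x_n)$ with $s_n \in (-\Per(x_n)/2,\Per(x_n)/2]$, the same lemma gives $\pfunc(y_n) - \pfunc(x_n) = m_n\,\Per(x_n)$ for some $m_n \in \ZZZ \setminus \{0\}$.

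Continuity of $\pfunc$ at $z$ forces $\pfunc(y_n) - \pfunc(x_n) \to 0$, and since $|m_n| \geq 1$ this forces $\Per(x_n) \to 0$ and hence $|s_n| \to 0$. Lemma \ref{lm:small_periods_x_in_FixF} then places $z$ into $\FixF$; this is \emph{not} yet a contradiction and is only the starting point of the remaining work. To close the argument I would exploit joint continuity of $\AFlow:\Mman\times\RRR\to\Mman$ at $(z,0)$, where $\AFlow(z,0) = z$. Given any open $\Uman \subset \Vman$ with $z \in \Uman$, there exist a neighborhood $V'$ of $z$ and $\delta > 0$ such that $\AFlow(V' \times (-\delta,\delta)) \subset \Uman$. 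For $n$ large enough, $x_n \in V'$ and $|s_n| < \delta$, so the entire orbit arc $\{\AFlow_t(x_n) : t \text{ between } 0 \text{ and } s_n\}$ lies in $\Uman \subset \Vman$. This arc is a path inside $\orb_{x_n} \cap \Vman$ joining $x_n$ to $y_n$, so these two points belong to the same path component of $\orb_{x_n} \cap \Vman$. Applying Lemma \ref{lm:init_prop_ShAV} a third time then forces $\pfunc(x_n) = \pfunc(y_n)$, contradicting the construction of the sequences.

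The genuine obstacle is the fixed-point case $z \in \FixF$: when $z \notin \FixF$, Lemma \ref{lm:small_periods_x_in_FixF} alone already yields the contradiction at the end of the second step. When $z \in \FixF$ the decisive ingredient is the short-time flow estimate of the last paragraph, which is where the openness of $\Vman$ is actually used—it guarantees that a small ball around $z$ fits inside $\Vman$, so that a sufficiently short orbit arc issuing from a nearby $x_n$ stays in $\Vman$.
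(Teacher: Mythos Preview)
Your argument is correct and follows essentially the same route as the paper: assume non-regularity on every neighbourhood, extract sequences $x_n,y_n\to z$ on common periodic orbits with $\pfunc(x_n)\neq\pfunc(y_n)$, use $|m_n|\geq 1$ to force $\Per(x_n)\to 0$, invoke Lemma~\ref{lm:small_periods_x_in_FixF} to get $z\in\FixF$, and then use joint continuity of $\AFlow$ at $(z,0)$ together with openness of $\Vman$ to reach a contradiction. The only cosmetic difference is in the final step: the paper observes that once $\Per(x_n)<\eps$ and $\AFlow(\Wman\times[0,\eps])\subset\Vman$, the \emph{entire} orbit $\orb_{x_n}$ lies in $\Vman$, so $\orb_{x_n}\cap\Vman$ is connected; you instead track the specific parameter $s_n$ and show the arc from $x_n$ to $y_n$ stays in $\Vman$, which is a slightly longer way to the same conclusion.
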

\begin{proof}
Suppose $\pfunc$ is not regular on arbitrary small neighbourhood of $z$.
Then we can find two sequences $\{x_i\}_{i\in\NNN}$ and $\{y_i\}_{i\in\NNN}$ converging to $z$ such that $y_i=\AFlow(x_i,\tau_i)$ for some $\tau_i\in\RRR$ and $\pfunc(x_i)<\pfunc(y_i)$ for all $i\in\NNN$.

It follows that $x_i$ and $y_i$ are periodic.
Otherwise, by Lemma\;\ref{lm:init_prop_ShAV}, we would have $\pfunc(x_i)=\pfunc(y_i)$.
Hence 
$0 < \pfunc(y_i)-\pfunc(x_i) = n_i\Per(x_i) $
for some $n_i\in\ZZZ\setminus\{0\}$.

We claim that $\lim\limits_{i\to\infty}\Per(x_i)=0$.
Indeed, take any $\eps>0$.
Then there is a neighbourhood $\Wman$ of $z$ such that $|\pfunc(y)-\pfunc(x)|<\eps$ for all $x,y\in\Wman$.
Let $N>0$ be such that $x_i,y_i\in\Wman$ for $i>N$, 
$$
\Per(x_i) \leq n_i\Per(x_i) =  \pfunc(y_i) - \pfunc(x_i) < \eps, \qquad i>N.
$$
This implies $\lim\limits_{i\to\infty}\Per(x_i)=0$, whence, by Lemma\;\ref{lm:small_periods_x_in_FixF}, $z\in\FixF$.
But in this case there exists a neighbourhood $\Wman$ of $z$ and $\eps>0$ such that $\AFlow(\Wman\times[0,\eps]) \subset\Vman$.
Take $x_i\in\Wman$ such that $\Per(x_i)<\eps$, then 
$$
\orb_{x_i} = \AFlow(x_i\times[0,\Per(x_i)) \subset \AFlow(\Wman\times[0,\eps]) \subset \Vman.
$$
In other words $\orb_{x_i} \cap \Vman = \orb_{x_i}$ is connected, whence by Lemma\;\ref{lm:init_prop_ShAV} $\pfunc$ is constant on $\orb_{x_i}$.
Therefore $\pfunc(x_i)=\pfunc(y_i)$ which contradicts to the assumption.
\end{proof}

\begin{lemma}[Continuity of extensions of regular $\PF$-functions]\label{lm:extension_of_RPF}
Let $\Vman\subset\Mman$ be an open subset and $\pfunc\in\RPF{\Vman}$ be a regular $\PF$-function on $\Vman$.
Put $\Uman=\AFlow(\Vman\times\RRR)$.
Then $\pfunc$ extends to a $\PF$-function $\tmufunc$ on all of $\Uman$.

If $\Mman$ is a $\Cont{r}$ manifold, $\Vman$ is open in $\Mman$, $\AFlow$ is $\Cont{r}$ on $\Vman\times\RRR$, and $\pfunc$ is $\Cont{r}$ on $\Vman$, then $\tmufunc$ is $\Cont{r}$ on $\Uman$.
\end{lemma}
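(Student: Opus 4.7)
The plan is to define the extension explicitly by
$$
\tmufunc(y):=\pfunc(x)\quad\text{whenever } y=\AFlow(x,\tau)\text{ with } x\in\Vman,\ \tau\in\RRR,
$$
and then to check, in turn, well-definedness, the $\PF$-identity on $\Uman$, and the required regularity. Well-definedness is the one place where the regularity hypothesis on $\pfunc$ enters: if $y=\AFlow(x_1,\tau_1)=\AFlow(x_2,\tau_2)$ with $x_1,x_2\in\Vman$, then $x_1$ and $x_2$ lie on the same orbit $\orb_y$, so both belong to $\orb_y\cap\Vman$, and regularity forces $\pfunc(x_1)=\pfunc(x_2)$. The $\PF$-identity on $\Uman$ is then a one-line application of the flow axioms, namely
$$
\AFlow(\AFlow(x,\tau),\pfunc(x))=\AFlow(\AFlow(x,\pfunc(x)),\tau)=\AFlow(x,\tau)=y,
$$
and $\tmufunc|_{\Vman}=\pfunc$ follows from $x=\AFlow(x,0)$.

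For continuity at a point $y\in\Uman$ I would fix a representation $y=\AFlow_{\tau}(x)$ with $x\in\Vman$ and use that $\AFlow_{\tau}:\Mman\to\Mman$ is a global homeomorphism. The image $\Wman:=\AFlow_{\tau}(\Vman)$ is an open neighbourhood of $y$ in $\Uman$, every $y'\in\Wman$ has a unique $\Vman$-preimage $x'=\AFlow_{-\tau}(y')$, and by the very definition of $\tmufunc$,
$$
\tmufunc|_{\Wman}=\pfunc\circ\AFlow_{-\tau}|_{\Wman}.
$$
This exhibits $\tmufunc$ near $y$ as a composition of continuous maps, and since such neighbourhoods cover $\Uman$ this proves continuity of $\tmufunc$ on $\Uman$. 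The $\Cont{r}$ case uses the same local formula: once $\AFlow_{-\tau}|_{\Wman}:\Wman\to\Vman$ is shown to be $\Cont{r}$, the composition with the $\Cont{r}$-function $\pfunc$ is $\Cont{r}$, so $\tmufunc$ is $\Cont{r}$ on $\Uman$.

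The main obstacle is therefore this last step: upgrading $\AFlow_{\tau}:\Vman\to\Wman$ from a homeomorphism that is $\Cont{r}$ in the forward direction to a $\Cont{r}$-diffeomorphism, using only the weak hypothesis that $\AFlow$ is $\Cont{r}$ on $\Vman\times\RRR$. The approach I would follow is: the $\Cont{r}$-smoothness of $\AFlow|_{\Vman\times\RRR}$ makes $d\AFlow_{t}(x)$ a $\Cont{r-1}$ matrix-valued function of $(x,t)\in\Vman\times\RRR$ with $d\AFlow_{0}(x)=\id$; combined with Hart's theorem\;\cite{Hart:Top:1983} (already invoked in the introduction) to replace $\AFlow$ by a conjugate flow generated by a $\Cont{r}$ vector field on $\Vman$, the variational equation along each orbit $t\mapsto\AFlow_{t}(x)$ becomes a linear ODE with identity initial condition, so $d\AFlow_{\tau}(x)$ remains invertible for every $\tau$. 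The inverse function theorem then promotes $\AFlow_{\tau}$ to a $\Cont{r}$-diffeomorphism of $\Vman$ onto $\Wman$, yielding the required $\Cont{r}$-smoothness of $\tmufunc$.
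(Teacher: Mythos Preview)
Your argument for the continuous part coincides with the paper's: the same definition of $\tmufunc$, the same appeal to regularity of $\pfunc$ for well-definedness, and the same local formula $\tmufunc=\pfunc\circ\AFlow_{-\tau}$ on a neighbourhood $\Wman=\AFlow_{\tau}(\Vman)$ of $y$ for continuity. You additionally spell out the $\PF$-identity $\AFlow(y,\tmufunc(y))=y$ on $\Uman$, which the paper omits but is indeed immediate from the flow law.

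For the $\Cont{r}$ assertion the paper simply says that the same local formula gives $\Cont{r}$-smoothness, without further comment. You are right to notice that this needs $\AFlow_{-\tau}|_{\Wman}:\Wman\to\Vman$ to be $\Cont{r}$, whereas the stated hypothesis only guarantees $\Cont{r}$-smoothness of $\AFlow$ on $\Vman\times\RRR$, and $\Wman$ need not lie in $\Vman$. Your attempted resolution, however, is not airtight. Hart's theorem concerns a global $\Cont{r}$ flow on a manifold, so invoking it to manufacture a $\Cont{r}$ generating field from the restriction $\AFlow|_{\Vman\times\RRR}$ (which is not itself a flow on $\Vman$) is not justified as stated. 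More seriously, even granting a $\Cont{r}$ field on $\Vman$, the variational equation
\[
\tfrac{d}{dt}\,d_x\AFlow_t(x)=D\AFld(\AFlow_t(x))\cdot d_x\AFlow_t(x)
\]
requires $D\AFld$ to exist along the whole orbit $t\mapsto\AFlow_t(x)$, and that orbit may leave $\Vman$ for intermediate times, where you have no information about $\AFld$; for $r=1$ the equation is not even available, since $\AFld$ is only $\Cont{0}$. In the paper's actual applications $\AFlow$ is globally $\Cont{r}$, so $\AFlow_{-\tau}$ is globally $\Cont{r}$ and the issue evaporates; under the literal hypothesis as written, neither the paper's one-line claim nor your argument fully closes the gap.
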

\begin{proof}
The definition of $\tmufunc$ is evident: if $y\in\Uman$, so $y=\AFlow(x,\tau)$ for some $(x,\tau)\in\Vman\times\RRR$, then we put $\pfunc(y)=\pfunc(x)$.
Since $\pfunc$ is regular, this definition does not depend on a particular choice of such $(x,\tau)$.

It remains to prove continuity of $\tmufunc$ on $\Uman$.
Let $y=\AFlow(x,t)\in\Uman$ for some $(x,t)\in\Vman\times\RRR$.
Since $\Vman$ is open, there exists a neighbourhood $\Wman$ of $y$ in $\Uman$ such that $\AFlow_{-t}(\Wman)\subset\Vman$.
Then $\tmufunc$ can be defined on $\Wman$ by $\tmufunc(z)=\pfunc\circ\AFlow_{-t}(z)$ for all $z\in\Wman$.
This shows continuity of $\tmufunc$ on $\Wman$.

Moreover, if $\Mman$ is a $\Cont{r}$ manifold, $\pfunc$ and $\AFlow$ are $\Cont{r}$, then so is $\tmufunc$.
\end{proof}

\begin{lemma}[Condition of divisibility by integers]
\label{lm:mu_p_is_regular_PF}
Let $\Vman\subset\Mman$ be a connected open subset and $\pfunc:\Vman\to\RRR$ be a regular $\PF$-function.
Suppose that there exist an integer $p\geq2$ and a non-empty open subset $\Wman\subset\Vman$ such that $\AFlow(x,\pfunc(x)/p)=x$ for all $x\in\Wman$, so the restriction of $\pfunc/p$ to $\Wman$ is a $\PF$-function.
Then $\pfunc/p$ is also a $\PF$-function on all of $\Vman$.
\end{lemma}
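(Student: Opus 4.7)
The strategy is to reduce the statement to an application of Newman's theorem by organising everything into a finite-order homeomorphism on an $\AFlow$-invariant enlargement of $\Vman$.

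First, I would pass from $\Vman$ to the saturation $\Uman=\AFlow(\Vman\times\RRR)$. This set is open (a union of homeomorphic copies $\AFlow_t(\Vman)$) and connected (each $y\in\Uman$ is joined to a point of the connected set $\Vman$ through its own orbit). By Lemma\;\ref{lm:extension_of_RPF} the regular $\PF$-function $\pfunc$ extends to a $\PF$-function $\tmufunc\in\PPF{\Uman}$, and because $\Uman$ is $\AFlow$-saturated, $\tmufunc$ is itself constant on each orbit, hence regular on $\Uman$.

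Second, I would define the continuous map
\[
\Amap:\Uman\to\Uman,\qquad \Amap(x)=\AFlow(x,\tmufunc(x)/p).
\]
Since $\Amap(x)$ lies on the $\AFlow$-orbit of $x$ and $\tmufunc$ is regular, $\tmufunc(\Amap(x))=\tmufunc(x)$. Induction then gives $\Amap^{k}(x)=\AFlow(x,k\tmufunc(x)/p)$, and in particular
\[
\Amap^{p}(x)=\AFlow(x,\tmufunc(x))=x
\]
for all $x\in\Uman$. Thus $\Amap$ is a self-homeomorphism of $\Uman$ of finite order $q$ dividing $p$, so the cyclic group $\ZZZ_{q}=\langle\Amap\rangle$ acts continuously and effectively on $\Uman$.

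Third, by hypothesis $\AFlow(x,\pfunc(x)/p)=x$ for every $x\in\Wman$, i.e.\ $\Wman\subset\Fix(\Amap)\subset\Fix(\ZZZ_{q})$. Since $\Wman$ is non-empty and open in $\Mman$, and $\Uman$ is open in $\Mman$, $\Wman$ is a non-empty open subset of $\Uman$. Hence $\Fix(\ZZZ_{q})$ has non-empty interior in the connected topological manifold $\Uman$.

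Finally, by Newman's theorem (Theorem\;\ref{th:Newman_th}), the fixed set of an effective action of a compact Lie group on a connected manifold is nowhere dense. The interior obtained above therefore forces $q=1$, i.e.\ $\Amap=\id_{\Uman}$. This says $\AFlow(x,\tmufunc(x)/p)=x$ for every $x\in\Uman$, and restricting to $\Vman$ yields $\pfunc/p\in\PPF{\Vman}$, as required.

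The main obstacle is arranging the setup so that Newman's theorem can be invoked: one needs $\Amap$ to be defined on an $\AFlow$-invariant, connected open manifold and to satisfy $\Amap^{p}=\id$ there. Both points rely essentially on the regularity of $\pfunc$ (used to extend $\pfunc$ to $\tmufunc$ via Lemma\;\ref{lm:extension_of_RPF} and to obtain the identity $\tmufunc\circ\Amap=\tmufunc$ which powers the induction). Once this structure is in place, Newman's theorem closes the argument in one stroke.
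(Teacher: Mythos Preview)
Your argument is correct and follows essentially the same route as the paper: pass to the $\AFlow$-saturation via Lemma\;\ref{lm:extension_of_RPF}, use regularity to show the map $x\mapsto\AFlow(x,\tmufunc(x)/p)$ has $p$-th iterate equal to the identity, and invoke Newman's theorem to conclude it is the identity. The only cosmetic difference is that the paper first reduces to the case where $p$ is prime, whereas you handle general $p$ directly by passing to the effective quotient $\ZZZ_q$; both variants work.
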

\begin{proof}
By Lemma\;\ref{lm:extension_of_RPF} we can assume that $\Vman$ is $\AFld$-invariant.
Moreover, it suffices to consider the case when $p$ is a prime.
Define the following map $\dif:\Vman\to\Vman$ by $\dif(x) = \AFlow(x,\pfunc(x)/p)$.
Since $\pfunc$ is constant along orbits of $\AFlow$, it follows that $\pfunc(\dif(x))=\pfunc(x)$, whence
\begin{multline*}
\dif\circ\dif(x) =\AFlow(\dif(x),\pfunc(\dif(x))/p)=  \\ =
\AFlow(\AFlow(x,\pfunc(x)/p,\pfunc(x)/p)= 
\AFlow(x,2\pfunc(x)/p),
\end{multline*}
and so on.
In particular, we obtain that $\dif^p=\id_{\Vman}$, and thus $\dif$ yields a $\ZZZ_p$-action on $\Vman$.
But by assumption this action is trivial on the non-empty open set $\Wman$.
Then by M.\;Newman's Theorem\;\ref{th:Newman_th} the action is trivial on all of $\Vman$, so $\pfunc/p$ is a $\PF$-function on $\Vman$.
\end{proof}

\begin{corollary}\label{cor:mu_0_on_V_IntFix}
Let $\pfunc$ be a regular $\PF$-function on a connected open subset $\Vman\subset\Mman$.
\begin{enumerate}
 \item[(i)]
If $\Vman\cap\Int{\FixF}\not=\varnothing$, then $\pfunc=0$ on $\Vman\setminus\Int{\FixF}$.
 \item[(ii)]
If $\Vman\cap\Int{\FixF}=\varnothing$ and $\pfunc=0$ on some open non-empty subset $\Wman\subset\Vman$, then $\pfunc=0$ on all of $\Vman$.
\end{enumerate}
\end{corollary}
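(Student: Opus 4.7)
The strategy is to apply Lemma\;\ref{lm:mu_p_is_regular_PF} to every prime $p\geq 2$. In each of the two cases I will exhibit a non-empty open subset $\Wman\subset\Vman$ on which the hypothesis $\AFlow(x,\pfunc(x)/p)=x$ holds for \emph{every} such $p$, and Lemma\;\ref{lm:mu_p_is_regular_PF} will then promote $\pfunc/p$ to a $\PF$-function on all of $\Vman$ for every prime $p$. In case (i) take $\Wman=\Vman\cap\Int{\FixF}$, which is open and non-empty by assumption; since every $x\in\Wman$ is fixed, $\AFlow(x,t)=x$ for \emph{every} real $t$, so the hypothesis of Lemma\;\ref{lm:mu_p_is_regular_PF} holds trivially for every $p$. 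In case (ii) the subset $\Wman$ is given and $\pfunc|_{\Wman}=0$, so $\AFlow(x,\pfunc(x)/p)=\AFlow(x,0)=x$ on $\Wman$ for every $p$.

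Next, at an arbitrary $x\in\Vman$, Lemma\;\ref{lm:init_prop_ShAV} gives $\pfunc(x)=n\Per(x)$ for some $n\in\ZZZ$ when $x$ is periodic and $\pfunc(x)=0$ when $x$ is non-periodic. Applying the same lemma to $\pfunc/p\in\PPF{\Vman}$ at a periodic $x$ yields $\pfunc(x)/p=n'_p\Per(x)$ with $n'_p\in\ZZZ$, whence $n=p\,n'_p$. Letting $p$ range over all primes forces $n=0$, so $\pfunc$ vanishes on $\Vman\setminus\FixF$.

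It remains to pass from vanishing on $\Vman\setminus\FixF$ to the sets claimed in the statement. Any point $x\in\Vman\cap\FixF$ that is \emph{not} in $\Int{\FixF}$ has every neighbourhood meeting $\Mman\setminus\FixF$; choosing that neighbourhood inside the open set $\Vman$ produces a sequence in $\Vman\setminus\FixF$ converging to $x$, and continuity of $\pfunc$ gives $\pfunc(x)=0$. In case (i) this handles precisely the set $\Vman\cap(\FixF\setminus\Int{\FixF})$, yielding $\pfunc\equiv 0$ on $\Vman\setminus\Int{\FixF}$. In case (ii) the hypothesis $\Vman\cap\Int{\FixF}=\varnothing$ means that \emph{every} fixed point of $\Vman$ lies outside $\Int{\FixF}$, so the same continuity argument applies at every $x\in\Vman\cap\FixF$ and gives $\pfunc\equiv 0$ on $\Vman$.

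The only step calling for any real care is iterating Lemma\;\ref{lm:mu_p_is_regular_PF} over all primes simultaneously and observing that the auxiliary subset $\Wman$ which witnesses its hypothesis is the \emph{same} for every prime; once that is noted, the rest is a direct combination of Lemma\;\ref{lm:mu_p_is_regular_PF}, Lemma\;\ref{lm:init_prop_ShAV}, and elementary continuity.
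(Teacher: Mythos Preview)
Your proof is correct and follows essentially the same approach as the paper: in both cases one chooses the same auxiliary open set $\Wman$ (namely $\Vman\cap\Int{\FixF}$ in (i), the given $\Wman$ in (ii)), applies Lemma~\ref{lm:mu_p_is_regular_PF} for every prime $p$ to conclude that $\pfunc/p\in\PPF{\Vman}$, and then uses Lemma~\ref{lm:init_prop_ShAV} to force $\pfunc=0$ on $\Vman\setminus\FixF$. The paper simply remarks that this ``evidently'' suffices for the stated conclusions, whereas you spell out the continuity argument passing from $\Vman\setminus\FixF$ to $\Vman\setminus\Int{\FixF}$ (resp.\ to all of $\Vman$); this is a welcome clarification but not a different idea.
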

\begin{proof}
Evidently, it suffices to show that in both cases $\pfunc=0$ on $\Vman\setminus\FixF$.

In the case (i) put $\Wman=\Vman\cap\Int{\FixF}$.

Let $p$ be any prime.
Then in both cases $\AFlow(y,\pfunc(y)/p)=y$ for all $y\in\Wman$, where $\Wman$ is a non-empty open set.
Hence by Lemma\;\ref{lm:mu_p_is_regular_PF} $\AFlow(y,\pfunc(y)/p)=y$ for all $y\in\Vman$, that is $\pfunc/p$ is a $\PF$-function on $\Vman$.
Thus if $\pfunc(x)=n\Per(x)\not=0$ for some $x\in\Vman\setminus\FixF$ and $n\in\ZZZ$, then $n$ is divided by $p$.
Since $p$ is arbitrary, we get $n=0$.
\end{proof}

\section{Proof of Theorem\;\ref{th:description_PF_V}}
\label{sect:proof:th:description_PF_V}

\paragraph{\bf (A)}
Suppose $\Int{\FixF}\cap\Vman\not=\varnothing$.
We should prove that the following set
$$
P'=\{\pfunc\in\Cz{\Vman}{\RRR} \ : \ \pfunc|_{\Vman\setminus\FixF}=0 \}.
$$
coincides with $\PPF{\Vman}$.
Evidently, $P'\subset\PPF{\Vman}$.

Conversely, let $\pfunc\in\PPF{\Vman}$.
We claim that \myemph{for every connected component $T$ of $\Vman\setminus\cl{\Int{\FixF}}$ there exists $z\in T$ such that $\pfunc(z)=0$}.
By Lemma\;\ref{lm:local_uniq_sh_func} this will imply that $\pfunc|_{T}=0$.
Since $T$ is arbitrary we will get that $\pfunc=0$ on all of $\Vman\setminus\Int{\FixF}\supset\Vman\setminus\FixF$ and, in particular, that $\pfunc$ is a regular $\PF$-function.

As $\Vman$ is connected, the following set is non-empty, see Figure\;\ref{fig:proof_a}:
$$
B \ := \ \cl{T}\;\cap\;\Vman\;\cap\;(\cl{\Int{\FixF}} \setminus \Int{\FixF}) \ \not= \ \varnothing.
$$
Let $x\in B \subset\Vman=\Int{\Vman}$.
Then by Lemma\;\ref{lm:local_regularity_P_func} there exists an open connected neighbourhood $\Wman$ such that $\pfunc|_{\Wman}$ is a regular $\PF$-function.
Then we have that $\Wman\cap \Int{\FixF}\not=\varnothing$ and $\Wman\cap T\not=\varnothing$ as well.
Since $\pfunc$ is regular on $\Wman$, it follows from (i) of Corollary\;\ref{cor:mu_0_on_V_IntFix} that $\pfunc=0$ on $\Wman\setminus\Int{\FixF}$ and, in particular, on $\Wman\cap T$.

\begin{figure}[ht]
\includegraphics[height=1.5cm]{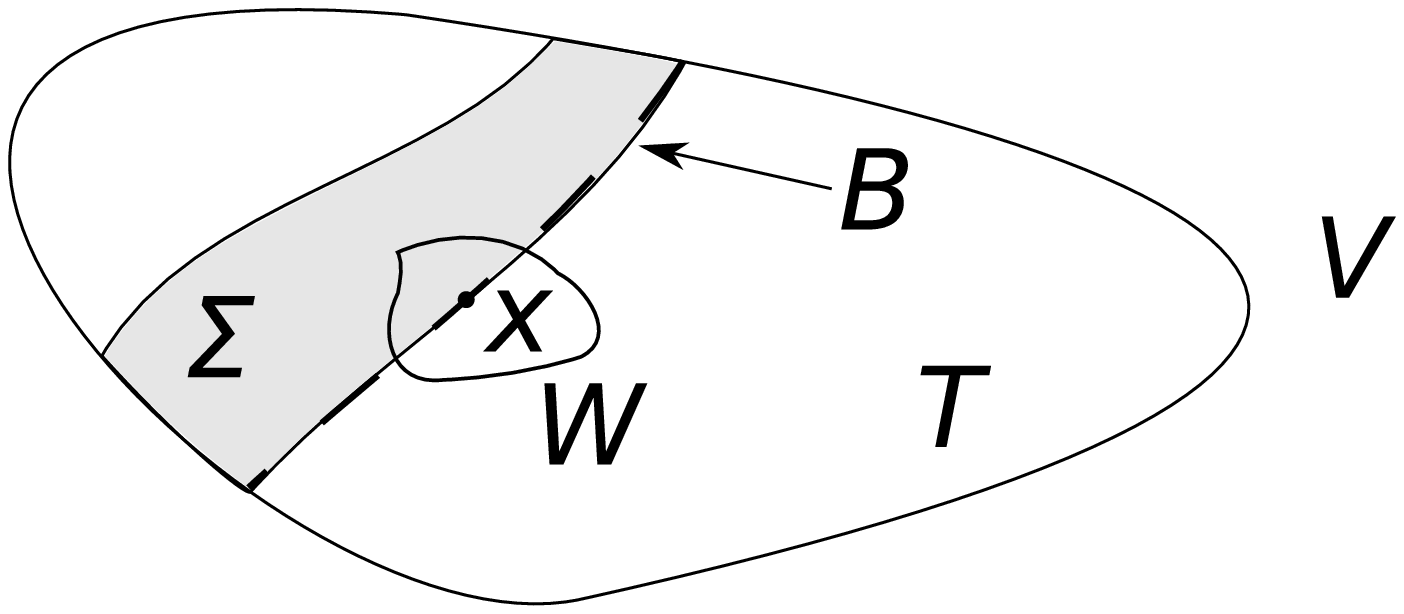}
\label{fig:proof_a}
\caption{}
\end{figure}

\paragraph{\bf (B)}
Suppose that $\Int{\FixF}\cap\Vman=\varnothing$ and $\PPF{\Vman}\not=\{0\}$, so there exists $\pfunc\in\PPF{\Vman}$ which is not identically zero on $\Vman$.
We have to show that $\PPF{\Vman}=\{n\theta\}_{n\in\ZZZ}$ for some $\PF$-function $\theta:\Vman\to\RRR$ satisfying \ThetaPropPositive-\ThetaPropRegularRegCl.

Denote by $Y$ the subset of $\Vman$ consisting of all points $x$ having one of the following two properties:
\begin{enumerate}
\item[(L1)]
$x\in\Vman\setminus\FixF$ and $\pfunc(x)=0$;
\item[(L2)]
$x\in\Vman\cap\FixF$ and there exists a sequence $\{x_i\}_{i\in\NNN}\subset\Vman\setminus\FixF$ converging to $x$ and such that $\pfunc(x_i)=0$ for all $i\in\NNN$.
\end{enumerate}
Evidently, $\pfunc=0$ on $Y$.

\begin{lemma}\label{lm:Y_is_open_closed}
$Y$ is open and closed in $\Vman$.
Hence if $\Vman$ is connected and $\pfunc(x)=0$ for some $x\in\Vman\setminus\FixF$, then $\pfunc=0$ on all of $\Vman$.
\end{lemma}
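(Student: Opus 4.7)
The plan is to verify openness and closedness separately, and then derive the final sentence of the lemma from connectedness.

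For openness I would argue as follows. Take $x\in Y$ and split by which of (L1), (L2) it satisfies. If $x$ satisfies (L1), so $x\in\Vman\setminus\FixF$ with $\pfunc(x)=0$, then Lemma~\ref{lm:local_uniq_sh_func} gives a neighbourhood $\Wman\subset\Vman$ of $x$ on which $\pfunc$ vanishes identically. Any point $y\in\Wman\setminus\FixF$ then satisfies (L1); and any point $y\in\Wman\cap\FixF$ satisfies (L2), because $\Int{\FixF}\cap\Vman=\varnothing$ forces $\Wman\setminus\FixF$ to be dense in $\Wman$, so one can approximate $y$ by points of $\Wman\setminus\FixF$ on which $\pfunc$ is $0$. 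If instead $x$ satisfies (L2), pick by Lemma~\ref{lm:local_regularity_P_func} a connected open neighbourhood $\Wman\subset\Vman$ of $x$ on which $\pfunc|_{\Wman}$ is regular; for $i$ large, the approximating sequence $x_i\to x$ lies in $\Wman$, and Lemma~\ref{lm:local_uniq_sh_func} shows $\pfunc=0$ on an open neighbourhood of some such $x_i$ inside $\Wman$. Since $\Wman\cap\Int{\FixF}=\varnothing$ (using $\Int{\FixF}\cap\Vman=\varnothing$), part~(ii) of Corollary~\ref{cor:mu_0_on_V_IntFix} applied to $\pfunc|_{\Wman}$ yields $\pfunc\equiv 0$ on $\Wman$. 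The same density argument as in the (L1) case then shows $\Wman\subset Y$.

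For closedness I would take $x\in\overline{Y}\cap\Vman$ and again split on whether $x\in\FixF$. If $x\notin\FixF$, continuity of $\pfunc$ together with the fact that $\pfunc\equiv 0$ on $Y$ (immediate from the definition of $Y$) forces $\pfunc(x)=0$, so $x$ satisfies (L1). If $x\in\FixF\cap\Vman$, pick a sequence $y_k\in Y$ with $y_k\to x$. Each $y_k$ is either itself a point of $\Vman\setminus\FixF$ with $\pfunc(y_k)=0$, or else a fixed point which by (L2) is the limit of a sequence in $\Vman\setminus\FixF$ on which $\pfunc$ vanishes. A diagonal extraction then produces a sequence $x_i\in\Vman\setminus\FixF$ with $x_i\to x$ and $\pfunc(x_i)=0$, confirming (L2) for $x$.

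Finally, the concluding sentence is automatic: if $\Vman$ is connected and there exists $x\in\Vman\setminus\FixF$ with $\pfunc(x)=0$, then $Y$ is a non-empty clopen subset of $\Vman$, hence $Y=\Vman$, so $\pfunc$ vanishes identically on $\Vman$.

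The main obstacle I anticipate is the openness at a point $x$ satisfying (L2): one must upgrade a sequence of zeros of $\pfunc$ accumulating at a fixed point to $\pfunc$ vanishing on a whole neighbourhood. This is exactly where the regularity criterion of Lemma~\ref{lm:local_regularity_P_func} and the divisibility/\,Newman-style propagation in Corollary~\ref{cor:mu_0_on_V_IntFix}(ii) are essential; without them one only gets the zero set to be closed, not open.
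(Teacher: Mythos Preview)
Your proposal is correct and follows essentially the same approach as the paper: openness at (L1)-points via Lemma~\ref{lm:local_uniq_sh_func}, openness at (L2)-points via Lemma~\ref{lm:local_regularity_P_func} combined with Corollary~\ref{cor:mu_0_on_V_IntFix}(ii), and closedness via a diagonal extraction. The only cosmetic difference is that in the (L1) case the paper simply takes $\Wman\subset\Vman\setminus\FixF$ (possible since $\FixF$ is closed), which makes the density argument for $\Wman\cap\FixF$ unnecessary there; your version is still correct but does a little extra work.
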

\begin{proof}
{\em $Y$ is open.}
Let $x\in Y$.
We will show that there exists an open neighbourhood $\Wman$ of $x$ such that $\Wman\subset Y$.

If $x\in\Vman\setminus\FixF$, then, by Lemma\;\ref{lm:local_uniq_sh_func}, $\pfunc=0$ on some neighbourhood $\Wman\subset\Vman\setminus\FixF$ of $x$.
Hence, by (L1), $\Wman\subset Y$.

Suppose $x\in\FixF\cap\Vman \subset \Vman =\Int{\Vman}$.
Then by Lemma\;\ref{lm:local_regularity_P_func} there exists an open neighbourhood $\Wman_x$ of $x$ such that $\pfunc|_{\Wman_x}$ is regular.
We claim that $\Wman_x\subset Y$.

First we show that $\pfunc=0$ on $\Wman_x$.
Indeed, by (L2) there exists a sequence $\{x_i\}_{i\in\NNN}\subset\Vman\setminus\FixF$ converging to $x$ and such that $\pfunc(x_i)=0$ for all $i\in\NNN$.
In particular, $x_i\in\Wman_x$ for some $i\in\NNN$.
Let $C$ be the connected component of $\Wman_x\setminus\FixF$ containing $x_i$.
Then $\pfunc=0$ on an open set $C\cap\Wman_x$, whence, by (ii) of Corollary\;\ref{cor:mu_0_on_V_IntFix}, $\pfunc=0$ on $\Wman_x$.

Therefore $\Wman_x\setminus\FixF \subset Y$.
Let $y\in\Wman_x\cap\FixF$.
Since $\Wman_x\cap\FixF$ is nowhere dense in $\Wman_x$, there exists a sequence $\{y_i\}_{i\in\NNN}\subset\Wman_x\setminus\FixF$ converging to $y$.
But then $\pfunc(y_i)=0$, whence, by (L2), $y\in Y$ as well.

\smallskip 

{\em $Y$ is closed.}
Let $\{x_i\}_{i\in\NNN}\subset Y$ be a sequence converging to some $x\in\Vman$.
We have to show that $x\in Y$.
Since $\pfunc(x_i)=0$, we have $\pfunc(x)=0$ as well.

If $x\in\Vman\setminus\FixF$, then by (L1) $x\in Y$.

Suppose $x\in\Vman\cap\FixF$.
Then we can assume that either $\{x_i\}_{i\in\NNN}\subset\Vman\setminus\FixF$ or $\{x_i\}_{i\in\NNN}\subset\Vman\cap\FixF$.
In the first case $x\in Y$ by (L2).

Suppose $\{x_i\}_{i\in\NNN}\subset\Vman\cap\FixF$.
Since $x_i\in Y$, it follows from (L2) for $x_i$ that there exists a sequence $\{y_{i}^{j}\}_{j\in\NNN}\subset\Vman\setminus\FixF$ converging to $x_i$ and such that $\pfunc(y_{i}^{j})=0$.
Then for each $i\in\NNN$ we can find $n(i)\in\NNN$ such that the \myemph{diagonal} sequence $\{y_{i}^{n(i)}\}_{i\in\NNN} \subset \Vman\setminus\FixF$, converges to $x$, and satisfies $\pfunc(y_{i}^{n(i)})=0$.
Hence, by (L2), $x\in Y$.
\end{proof}

Thus we can assume that $\pfunc\not=0$ on $\Vman\setminus\FixF$.
In particular, all points in $\Vman\setminus\FixF$ are periodic.

Take any $x\in\Vman\setminus\FixF$ and consider the following \myemph{homomorphism} 
$$
e_x:\PPF{\Vman}\to\ZZZ,
\qquad 
e_x(\nufunc)=\nufunc(x)/\Per(x),
$$
for $\nufunc\in\PPF{\Vman}$.
If $\nufunc(x)=0$, then, as noted above, $\nufunc=0$ on all of $\Vman$, whence \myemph{$e_x$ is a monomorphism}.
Moreover, $e_x(\pfunc)=\pfunc(x)\not=0$, whence $e_x$ yields an isomorphism of $\PPF{\Vman}$ onto a non-zero subgroup $k\ZZZ$ of $\ZZZ$ for some $k\in\NNN$.
Put $\theta=e_x^{-1}(k)$.
Then $\PPF{\Vman}=\{n\theta\}_{n\in\ZZZ}$.

\smallskip 

It remains to verify properties of $\theta$.

\smallskip 

\paragraph{\bf \ThetaPropPeriod$\Rightarrow$\ThetaPropPositive}
We have that $\theta(x)=\Per(x)>0$ on an open and everywhere dense subset $\Qman\subset\Vman$, whence $\theta\geq0$ on $\Vman$.
On the other hand, by Lemma\;\ref{lm:Y_is_open_closed}, $\theta\not=0$ on $\Vman\setminus\FixF$, whence $\theta>0$ on $\Vman\setminus\FixF$.

\smallskip 

\paragraph{\bf \ThetaPropPeriod$\Rightarrow$\ThetaPropRegularOpen}
We have to show that $\theta$ is regular, that is $$\theta(x)=\theta(\AFlow_{\tau}(x))$$
for any $x\in\Vman\setminus\FixF$ and $\tau\in\RRR$ such that $\AFlow_{\tau}(x)\in\Vman$.

First notice that for any open subsets $A,B \subset\Mman$ we have that 
\begin{equation}\label{equ:cl_UcapclV__cl_UcapV}
\cl{A\cap\cl{B}} \ = \ \cl{\cl{A}\cap B} \ = \ \cl{A\cap B}.
\end{equation}

Since $\Qman$ is open and everywhere dense in $\Vman$, it follows that 
\begin{multline*}
\AFlow_{\tau}(x) 
\;\in\;
\Vman\cap\AFlow_{\tau}(\Vman) 
\;\subset\;
\cl{\cl{\Qman}\cap\AFlow_{\tau}(\Vman)} 
\;\stackrel{\eqref{equ:cl_UcapclV__cl_UcapV}}{=\!=\!=}\; \\ =\;
\cl{\Qman\cap\cl{\AFlow_{\tau}(\Vman)}} \;=\; 
\cl{\Qman\cap\cl{\AFlow_{\tau}(\Qman)}} 
\;\stackrel{\eqref{equ:cl_UcapclV__cl_UcapV}}{=\!=\!=}\;
\cl{\Qman\cap\AFlow_{\tau}(\Qman)}.
\end{multline*}

In other words, there exists a sequence $\{x_i\}_{i\in\NNN}\subset \Qman$ converging to $x$ and such that $\{\AFlow_{\tau}(x_i)\}_{i\in\NNN} \subset  \Qman$.
Then $\theta(\AFlow_{\tau}(x_i)) = \theta(x_i) = \Per(x_i)$.
Whence
$$\theta(\AFlow_{\tau}(x)) =
\lim\limits_{i\to\infty} \theta(\AFlow_{\tau}(x_i)) =
\lim\limits_{i\to\infty} \theta(x_i) =
\theta(x).
$$

\smallskip 

\paragraph{\bf \ThetaPropRegularOpen$\Rightarrow$\ThetaPropCircleAction}
See Lemma\;\ref{lm:extension_of_RPF}.

\smallskip 

\paragraph{\bf \ThetaPropPeriod}
The proof consists of the following three statements.

\begin{clm}
Let $x\in\Vman\setminus\FixF$.
Then there exist an open connected neighbourhood $\Wman_x$ of $x$ in $\Vman$, 
a regular $\PF$-function $\theta_x\in\PPF{\Wman_x}$, a number $m_x\in\ZZZ\setminus\{0\}$, and 
an open and everywhere dense subset $\Qman_x \subset\Wman_x$ consisting of periodic points such that 
\begin{enumerate}
 \item[(a)]
$\PPF{\Wman_x}=\{m\theta_x\}_{m\in\ZZZ}$,
 \item[(b)]
$\theta=m_x\theta_x$ on $\Wman_x$,
 \item[(c)] 
$\theta_x(y)=\Per(y)$ for all $y\in\Qman_x$.
\end{enumerate}
\end{clm}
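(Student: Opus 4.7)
The plan is to construct $\theta_x$ as a ``primitive'' part of $\theta$ near $x$ and then use Newman's Theorem~\ref{th:Newman_th} applied to the circle action it generates to obtain both the density in (c) and the uniqueness (a).

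First, using closedness of $\FixF$ together with Lemma~\ref{lm:local_regularity_P_func}, I pick an open connected neighbourhood $\Wman_x \subset \Vman\setminus\FixF$ of $x$ on which $\theta$ is regular and bounded and, by Lemma~\ref{lm:small_periods_x_in_FixF}, on which $\Per$ is bounded below. Since, by Lemma~\ref{lm:Y_is_open_closed}, $\theta$ does not vanish on $\Vman\setminus\FixF$, every $y\in\Wman_x$ is periodic with $\theta(y)>0$, and $n(y):=\theta(y)/\Per(y)$ is a positive integer which is bounded and upper semicontinuous on $\Wman_x$. Let $\Uman_x:=\AFlow(\Wman_x\times\RRR)$; it is open and connected, and Lemma~\ref{lm:extension_of_RPF} extends $\theta$ regularly to $\Uman_x$, with $n$ constant along orbits.

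Define $m_x:=\gcd\{n(y):y\in\Wman_x\}$; this is a well-defined positive integer because $n$ is bounded. Put $\theta_x:=\theta/m_x$: since $m_x\mid n(y)$ for every $y$ we have $\AFlow(y,\theta_x(y))=y$, so $\theta_x\in\PPF{\Wman_x}$; it is regular and satisfies $\theta=m_x\theta_x$, giving (b). Let $\Qman_x:=\{y\in\Wman_x:n(y)=m_x\}$. Upper semicontinuity of $n$ together with $n\geq m_x$ yields that $\Qman_x=\{n\leq m_x\}$ is open, and $\theta_x(y)=\Per(y)$ on $\Qman_x$, which is the equality in (c).

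Density of $\Qman_x$ is the main step. Write $\widetilde n(y):=\theta_x(y)/\Per(y)=n(y)/m_x$; maximality of $m_x$ gives $\gcd_y\widetilde n(y)=1$. Using regularity of $\theta_x$ on $\Uman_x$, the map $\widetilde\BFlow(y,t):=\AFlow(y,t\theta_x(y))$ is a continuous $S^1$-action on the connected topological manifold $\Uman_x$ (Lemma~\ref{lm:RPF_S1-action}). For each prime $q$, the time-$1/q$ homeomorphism $\widetilde\BFlow_{1/q}$ cannot be the identity on $\Uman_x$ (else $q\mid\widetilde n(y)$ for all $y$, contradicting $\gcd\widetilde n=1$); since $\ZZZ_q$ has no proper non-trivial subgroup, the $\ZZZ_q$-action it generates is effective, and Newman's Theorem~\ref{th:Newman_th} shows that $\Fix(\widetilde\BFlow_{1/q})=\{y\in\Uman_x:q\mid\widetilde n(y)\}$ is nowhere dense in $\Uman_x$. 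As $\widetilde n$ is bounded, only finitely many primes $q$ satisfy $q\leq\sup\widetilde n$, hence $\Wman_x\setminus\Qman_x=\bigcup_q\{y\in\Wman_x:q\mid\widetilde n(y)\}$ is a finite union of nowhere dense sets, so $\Qman_x$ is dense.

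Finally for (a), given $\pfunc\in\PPF{\Wman_x}$ put $f:=\pfunc/\theta_x$, a continuous real-valued function on $\Wman_x$ since $\theta_x>0$. For $y\in\Qman_x$, $\theta_x(y)=\Per(y)$ and Lemma~\ref{lm:init_prop_ShAV} makes $\pfunc(y)$ an integer multiple of $\Per(y)$, so $f(\Qman_x)\subset\ZZZ$; by continuity of $f$ and density of $\Qman_x$ one has $f(\Wman_x)\subset\overline{f(\Qman_x)}\subset\ZZZ$. A continuous integer-valued function on the connected set $\Wman_x$ is constant, so $f\equiv k\in\ZZZ$ and $\pfunc=k\theta_x$, establishing (a). The main obstacle of the plan is the density of $\Qman_x$: it critically depends on transporting the problem to the connected saturation $\Uman_x$ via Lemma~\ref{lm:extension_of_RPF} so as to apply Newman's theorem to effective $\ZZZ_q$-actions.
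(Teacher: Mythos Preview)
Your argument is correct and reaches the same conclusion as the paper, but the logical order is inverted, and this is worth noting. The paper first obtains (a) and (b) cheaply: since $\Wman_x$ is open, connected, and meets no fixed points, the evaluation homomorphism $e_y:\PPF{\Wman_x}\to\ZZZ$ (already set up for $\Vman$) is a monomorphism, so $\PPF{\Wman_x}$ is cyclic with some generator $\theta_x$, and $\theta|_{\Wman_x}=m_x\theta_x$ follows immediately. Only then does the paper prove (c), by showing that the sets $T_n=\{y:n\mid \theta_x(y)/\Per(y)\}$ are nowhere dense for $n\geq2$ via Lemma~\ref{lm:mu_p_is_regular_PF} (which packages Newman's theorem). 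You instead \emph{construct} $\theta_x$ explicitly as $\theta/m_x$ with $m_x=\gcd\{n(y)\}$, prove density of $\Qman_x$ by applying Newman's theorem directly to the $\ZZZ_q$-subactions of the induced $S^1$-action on the saturation $\Uman_x$, and only afterwards deduce (a) from (c) via the quotient $\pfunc/\theta_x$. Both routes hinge on Newman; yours is more self-contained (it avoids re-invoking the evaluation argument for $\Wman_x$) but pays for it with the extra step for (a), while the paper's ordering gets (a) and (b) for free.

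One small point: you assert $n(y)$ is a \emph{positive} integer, but at this stage only $\theta\neq0$ on $\Vman\setminus\FixF$ has been shown, not $\theta>0$; on the connected set $\Wman_x$ the sign is constant, so either replace $n$ by $|n|$ or allow $m_x$ to carry the sign so that $\theta_x>0$. This does not affect the substance of your proof.
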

\begin{proof}
By Lemma\;\ref{lm:local_regularity_P_func} there exists an open connected neighbourhood $\Wman_x$ of $x$ such that $\cl{\Wman_x}\subset\Vman\setminus\FixF$ and $\theta|_{\Wman_x}$ is regular.
Notice that if we decrease $\Wman_x$, then the restriction of $\theta$ to $\Wman_x$ remains regular.
Therefore we can additionally assume that there exists $\eps\in(0,\Per(x))$ such that
\begin{enumerate}
\item[(i)] 
$\theta(y)<\theta(x)+\eps$ for all $y\in\cl{\Wman_x}$;
\item[(ii)]
$\Per(x)< \Per(y)+\eps$ for all $y\in\cl{\Wman_x}$;
\item[(iii)]
there is $N>0$ such that $n_y := \theta(y)/\Per(y) < N$ for all $y\in\cl{\Wman_x}$.
\end{enumerate}
Indeed, (i) follows from continuity of $\theta$, and (ii) from \myemph{lower semicontinuity} of $\Per$, c.f.\;\cite{Montgomery:AJM:1937}.

More precisely, suppose (ii) fails.
Then there exists a sequence $\{x_i\}_{i\in\NNN}\subset\Vman\setminus\FixF$ converging to $x$ and such that $\Per(x)\geq \Per(x_i)+\eps$.
In particular, periods of $x_i$ are bounded above and we can assume that $\lim\limits_{i\to\infty}\Per(x_i)=\tau<\infty$ for some $\tau$.
Then 
\begin{equation}\label{equ:Perx_g_tau}
\Per(x)\geq\tau + \eps>\tau.
\end{equation}
But $\AFlow(x,\tau) = \lim\limits_{i\to\infty}\AFlow(x_i,\Per(x_i)) = x$, so $\tau=n\Per(x) \geq \Per(x)$ for some $n\in\NNN$, which contradicts to\;\eqref{equ:Perx_g_tau}.
This proves (ii).

To establish (iii) notice that it follows from (i) and (ii) that 
$$
n_y(\Per(x)-\eps) < n_y \Per(y) = \theta(y) < \theta(x)+\eps,
$$
whence 
$$
N := \frac{\theta(x)+\eps}{\Per(x)-\eps} > n_y.
$$
This proves (iii).

\smallskip 

Consider the group $\PPF{\Wman_x}$.
As $\Wman_x$ is open and connected, we have that $\PPF{\Wman_x}=\{m\theta_x\}_{m\in\ZZZ}$ for some $\theta_x\in\Cz{\Wman}{\RRR}$.
By assumption, $\theta$ is a $\PF$-function on $\Wman_x$, whence $\theta|_{\Wman_x} = m_x\theta_x$ for some $m_x\in\ZZZ\setminus\{0\}$.

\smallskip 

To construct $\Qman_x$ notice that for each $y\in\Wman_x\setminus\FixF$ there exists a unique $n_y\in\ZZZ$ such that $\theta_x(y)=n_y\Per(y)$.
For every $n\in\NNN$ denote by $T_{n}$ the subset of $\Wman_x$ consisting of all $y$ such that $n_y$ is divided by $n$.
Since the values $n_y$ are bounded above, it follows that $T_n$ is non-empty only for finitely many $n$.
Also notice that 
$$
\Wman_x\setminus\FixF=\mathop\cup\limits_{n=1}^{N} T_n.
$$
We claim that \myemph{$\cl{T_n}$ is nowhere dense for $n\geq 2$}.
Indeed, suppose $\Int{\cl{T_n}}\not=\varnothing$.
Then $\theta_x/n$ is a regular $\PF$-function on $\Int{\cl{T_n}}$ and therefore, by Lemma\;\ref{lm:mu_p_is_regular_PF}, on all of $\Wman_x$.
However this is possible only for $n=1$ as $\theta_x$ generates $\PPF{\Wman_x}$.
Thus the subset $Q_x := \Int{\cl{T_1}}\cap\Wman_x$ is open and everywhere dense in $\Wman$ and $\theta(y)=\Per(y)$ for all $y\in\Qman_x$.
\end{proof}

\begin{clm}\label{clm:thetax_thetay__mx_my}
Let $x,y\in\Vman\setminus\FixF$.
Then $\theta_x=\theta_y$ on $\Wman_x\cap\Wman_y$ and $m_x=m_y$.
\end{clm}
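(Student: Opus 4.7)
The plan is to split the claim into two sub-assertions — first $\theta_x=\theta_y$ on $\Wman_x\cap\Wman_y$ (which is substantive only when the intersection is non-empty), and then the global equality $m_x=m_y$ — and handle them in that order.

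For the first sub-assertion, assume $\Wman_x\cap\Wman_y\neq\varnothing$. Both $\Qman_x\cap(\Wman_x\cap\Wman_y)$ and $\Qman_y\cap(\Wman_x\cap\Wman_y)$ are open and dense in $\Wman_x\cap\Wman_y$, and hence so is their intersection $D\defeq\Qman_x\cap\Qman_y\cap(\Wman_x\cap\Wman_y)$. By part (c) of the previous claim, $\theta_x|_D=\Per|_D=\theta_y|_D$, and continuity of $\theta_x,\theta_y$ then forces $\theta_x=\theta_y$ throughout $\Wman_x\cap\Wman_y$. Combining this with part (b), we have $m_x\theta_x=\theta=m_y\theta_y=m_y\theta_x$ on the overlap; since $\theta_x$ is not identically zero (being equal to $\Per>0$ on the dense subset $\Qman_x$), we conclude $m_x=m_y$ whenever $\Wman_x\cap\Wman_y\neq\varnothing$.

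For the general case — in particular when $\Wman_x\cap\Wman_y=\varnothing$ — I would show that $m_x=1$ for every $x\in\Vman\setminus\FixF$ via a Newman-type argument in the spirit of Lemma~\ref{lm:mu_p_is_regular_PF}. Define $\dif_x:\Vman\to\Vman$ by $\dif_x(z)=\AFlow(z,\theta(z)/m_x)$; since $\theta\in\PPF{\Vman}$, the $m_x$-fold iterate satisfies $\dif_x^{m_x}(z)=\AFlow(z,\theta(z))=z$, so $\dif_x$ is a self-homeomorphism of $\Vman$ generating a finite cyclic action. On the non-empty open set $\Wman_x$, part (b) gives $\theta/m_x=\theta_x\in\PPF{\Wman_x}$, so $\dif_x|_{\Wman_x}=\id$. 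Passing to the quotient of $\langle\dif_x\rangle$ by its ineffective kernel yields an effective finite cyclic action on the connected manifold $\Vman$ whose fixed set contains the non-empty open set $\Wman_x$; Newman's Theorem~\ref{th:Newman_th} forces this quotient to be trivial, so $\dif_x=\id_{\Vman}$. Hence $\theta/m_x\in\PPF{\Vman}=\{n\theta\}_{n\in\ZZZ}$, forcing $m_x\cdot n=1$ for some integer $n$ and therefore $m_x=1$. The same reasoning applied to $y$ gives $m_y=1$, proving $m_x=m_y$ in full generality.

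The main obstacle is precisely the global equality $m_x=m_y$ in the case $\Wman_x\cap\Wman_y=\varnothing$. The map $x\mapsto m_x$ is a priori only locally constant on $\Vman\setminus\FixF$, and this set may well be disconnected by $\FixF\cap\Vman$ even though $\Vman$ itself is connected; bridging different connected components requires working with the cyclic group generated by $\dif_x$ on the full connected manifold $\Vman$ (rather than only on $\Vman\setminus\FixF$), so that Newman's theorem applies.
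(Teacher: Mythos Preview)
Your treatment of the overlapping case $\Wman_x\cap\Wman_y\neq\varnothing$ is correct and is essentially the paper's own proof of this claim.

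The gap is in your ``general case'' argument. You form $\dif_x(z)=\AFlow(z,\theta(z)/m_x)$ and assert that $\dif_x^{m_x}(z)=\AFlow(z,\theta(z))=z$. But this iteration formula is valid only if $\theta(\dif_x(z))=\theta(z)$, i.e.\ if $\theta$ is \emph{regular} on $\Vman$ (this is exactly the hypothesis Lemma~\ref{lm:mu_p_is_regular_PF} needs, and is used there for the same computation). At this stage of the proof regularity of $\theta$ is not available: property~\ThetaPropRegularOpen\ is deduced from property~\ThetaPropPeriod, which is precisely what we are in the middle of proving. A related problem is that $\Vman$ is not assumed $\AFlow$-invariant, so $\dif_x(z)$ need not lie in $\Vman$ and $\dif_x$ is not a priori a self-map of $\Vman$; the passage to $\Uman=\AFlow(\Vman\times\RRR)$ via Lemma~\ref{lm:extension_of_RPF} again requires $\theta$ to be regular. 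So the Newman shortcut cannot be taken here without circularity.

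The paper avoids this by \emph{not} proving the global statement $m_x=m_y$ inside this claim. Its proof of Claim~\ref{clm:thetax_thetay__mx_my} only treats the case $\Wman_x\cap\Wman_y\neq\varnothing$; a chain argument then gives constancy of $m_x$ on each path component $T$ of $\Vman\setminus\FixF$. The bridge between different components is Claim~\ref{clm:thetaT_thetaS__mT_mS}, which uses the \emph{local} regularity of $\theta$ (Lemma~\ref{lm:local_regularity_P_func}) near a point of $\cl{S}\cap\cl{T}$ together with Lemma~\ref{lm:mu_p_is_regular_PF} on that small neighbourhood to show $m_S\mid m_T$ and $m_T\mid m_S$. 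Only after all $m_T$ are shown equal does one conclude $m=1$.
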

\begin{proof}
Indeed, since $\Qman_x$ ($\Qman_y$) is open and everywhere dense in $\Wman_x$ ($\Wman_y$), it follows that $\Qman_x\cap\Qman_y$ is open and everywhere dense in $\Wman_x\cap\Wman_y$.
Moreover, for each $z\in \Qman_x\cap\Qman_y$ we have that $\theta_x(z)=\theta_y(z)=\Per(z)$.
Then by continuity $\theta_x=\theta_y$ on $\Wman_x\cap\Wman_y$.

In particular, if $z\in\Qman_x\cap\Qman_y$, then $\theta(z)=m_x\Per(z)=m_y\Per(z)$, whence $m_x=m_y$.
\end{proof}

Let $T$ be a path component of $\Vman\setminus\FixF$.
Then by Claim\;\ref{clm:thetax_thetay__mx_my} $m_x$ is the same for all $x\in T$ and we denote their common value by $m_{T}$.
It also follows that the functions $\{\theta_{x}\}_{x\in T}$ define a continuous function $\theta_{T}:T\to\RRR$ such that 
$\theta|_{T} = m_{T}\,\theta_{T}$.
Thus if we put $\Qman_{T} = \mathop\cup\limits_{x\in T}\Qman_{x}$, then $\Qman_{T}$ is open and everywhere dense in $T$ and $\theta_{T}(y)=\Per(y)$ for all $y\in\Qman_{T}$.

\begin{clm}\label{clm:thetaT_thetaS__mT_mS}
Let $S$ and $T$ be any connected components of $\Vman\setminus\FixF$ such that $\cl{S}\cap\cl{T}$.
Then $m_{S}=m_{T}$.
\end{clm}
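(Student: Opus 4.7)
The plan is to divide Claim \ref{clm:thetaT_thetaS__mT_mS} into proving $|m_S|=|m_T|$ by Lemma \ref{lm:mu_p_is_regular_PF} and then matching the signs of $m_S$ and $m_T$ by the non-separation part of Newman's Theorem \ref{th:Newman_th} applied to an ad-hoc $S^1$-action. Note that the global circle action from \ThetaPropCircleAction is unavailable, since that property was derived from \ThetaPropRegularOpen which in turn rests on the present \ThetaPropPeriod; so the compact-group action needed for Newman/MSZ has to be built by hand on an $\AFlow$-invariant open neighborhood.

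First I would pick $z\in\cl{S}\cap\cl{T}\cap\Vman$. Because distinct connected components of the open set $\Vman\setminus\FixF$ are disjoint open-closed subsets of $\Vman\setminus\FixF$, such a $z$ must lie in $\Vman\cap\FixF$. Using Lemma \ref{lm:local_regularity_P_func}, choose a connected open neighborhood $\Wman\subset\Vman$ of $z$ on which $\theta|_\Wman$ is regular; then both $\Wman\cap S$ and $\Wman\cap T$ are nonempty. If $|m_T|\geq 2$, the identity $\theta=m_T\theta_T$ on $\Wman\cap T$, together with $\theta_T=\Per$ on the dense subset $\Qman_T\cap\Wman$, shows that $\theta/|m_T|$ is a $\PF$-function on $\Wman\cap T$. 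Lemma \ref{lm:mu_p_is_regular_PF} applied with $\pfunc=\theta|_\Wman$, $p=|m_T|$, and open subset $\Wman\cap T$ then extends $\theta/|m_T|$ to a $\PF$-function on all of $\Wman$; evaluating at any $y\in\Qman_S\cap\Wman$, where $\theta(y)=m_S\Per(y)$, forces $m_S/|m_T|\in\ZZZ$, i.e.\ $|m_T|\mid m_S$. Symmetrically (or trivially when $|m_T|=1$), $|m_S|=|m_T|$.

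For signs, set $\Wman'=\AFlow(\Wman\times\RRR)$, which is a connected, open, $\AFlow$-invariant subset of $\Mman$ with $\Int{\FixF}\cap\Wman'=\varnothing$, and use Lemma \ref{lm:extension_of_RPF} to extend $\theta|_\Wman$ to a continuous regular $\PF$-function $\theta_{\Wman'}$ on $\Wman'$. The formula $\dif_s(x)=\AFlow(x,s\theta_{\Wman'}(x))$ then defines, by regularity of $\theta_{\Wman'}$, a continuous $\RRR$-action on $\Wman'$ with $\dif_1=\id$, hence a continuous $S^1$-action whose fixed-point set is exactly $\FixF\cap\Wman'$. Its effective quotient is a non-trivial $S^1$-action on the connected manifold $\Wman'$ (otherwise $\theta_{\Wman'}\equiv 0$, which via Lemma \ref{lm:Y_is_open_closed} applied to $\Vman$ would contradict $\theta\not\equiv 0$), so by the non-separation part of Theorem \ref{th:Newman_th}, $\Wman'\setminus\FixF$ is connected. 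Joining a point of $\Wman\cap S$ to one of $\Wman\cap T$ by a path in $\Wman'\setminus\FixF$, and using that $\theta_{\Wman'}$ is continuous and nowhere zero there (Lemma \ref{lm:Y_is_open_closed} applied to $\Wman'$), we see that $\theta_{\Wman'}$ has constant sign along this path; this sign equals $\operatorname{sgn}(m_S)$ at one end and $\operatorname{sgn}(m_T)$ at the other, giving $\operatorname{sgn}(m_S)=\operatorname{sgn}(m_T)$. Combined with $|m_S|=|m_T|$, this yields $m_S=m_T$. The main obstacle throughout is the construction and verification of the local $S^1$-action $\dif$ on $\Wman'$, which substitutes for the as-yet-unavailable global circle action and is what makes the Montgomery-Samelson-Zippin statement applicable.
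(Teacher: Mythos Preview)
Your divisibility step---choosing a regularity neighborhood $\Wman$ of a point of $\cl{S}\cap\cl{T}\cap\Vman$, applying Lemma~\ref{lm:mu_p_is_regular_PF} to propagate $\theta/|m_T|$ from $\Wman\cap T$ to all of $\Wman$, and then evaluating on $\Qman_S\cap\Wman$---is exactly the paper's argument. The paper simply writes ``$m_S\mid m_T$ and by symmetry $m_T\mid m_S$, whence $m_S=m_T$'' and does not isolate the sign issue; your second paragraph, which manufactures a local $S^1$-action on $\Wman'=\AFlow(\Wman\times\RRR)$ and invokes the Montgomery--Samelson--Zippin non-separation part of Theorem~\ref{th:Newman_th} to force $\mathrm{sgn}(m_S)=\mathrm{sgn}(m_T)$, is correct and does close that point.

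That said, the sign question has a two-line resolution already available at this stage, so the auxiliary action is heavier than necessary. Since $\PPF{\Vman}$ is a group under pointwise addition and $|\theta|$ is continuous with $\AFlow(x,|\theta(x)|)=x$ for every $x$, one has $|\theta|\in\PPF{\Vman}=\{n\theta\}_{n\in\ZZZ}$, hence $|\theta|=n\theta$ for a \emph{single} integer $n$. Evaluating at the point $x$ used to define $e_x$ (where $\theta(x)=k\Per(x)>0$) gives $n=1$, so $\theta\geq0$ on all of $\Vman$; since $\theta_T>0$ on the dense set $\Qman_T$, every $m_T$ is positive and mutual divisibility already yields $m_S=m_T$. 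Your route via Newman's theorem buys a self-contained geometric picture and avoids appealing back to the description $\PPF{\Vman}=\{n\theta\}$, but the cyclic structure of $\PPF{\Vman}$ makes the sign automatic.
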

\begin{proof}
We can assume that $T\not=S$.
Let $x\in\cl{S}\cap\cl{T}\subset\Vman\cap\FixF$ and $\Wman_x$ be an open, connected neighbourhood of $x$ in $\Vman$ such that $\theta|_{\Wman_x}$ is a regular $\PF$-function on $\Wman_x$.
Notice that $\theta_S=\theta/m_S$ is a regular $\PF$-function on the non-empty open set $\Wman_x\cap S$, whence, by Lemma\;\ref{lm:mu_p_is_regular_PF}, $\theta/m_S$ is a $\PF$-function on all of $\Wman_x$.

If $x\in \Qman_T\cap\Wman_x$, then $\theta(x) = m_{T}\theta_{T}(x)=m_{T}\Per(x)$, therefore $m_{T}$ is divided by $m_{S}$.
By symmetry $m_{S}$ is divided by $m_{T}$ as well, whence $m_{S}=m_{T}$.
\end{proof}

Since $\Vman$ is connected, it follows from Claim\;\ref{clm:thetaT_thetaS__mT_mS} that the number $m_{T}$ is the same for all connected components $T$ of $\Vman\setminus\FixF$.
Denote the common value of these numbers by $m$.
Then $\theta/m$ is continuous on $\Vman$ and $\AFlow(x,\theta(x)/m)=x$ for all $x\in\Vman$.
Since $\theta$ generates $\PPF{\Vman}$, we obtain that $m=1$.

Let $\Qman$ be the union of all $\Qman_{T}$, where $T$ runs over the set of all connected components of $\Vman\setminus\FixF$.
Since for every such component $T$ we have that $\theta=m\theta_{T}=\theta_{T}$ on $T$, it follows that $\theta(x)=\Per(x)$ for all $x\in\Qman_{T}$.
Theorem\;\ref{th:description_PF_V} is completed.

\section{Diameters and lengths of orbits} \label{sect:diam_orb_lengths}

\subsection{Effective $\ZZZ_{p}$-actions}
\label{sect:effect_Zp_actions}
We recall here results of A.\;Dress\;\cite[Lm.\;3]{Dress:Topol:1969} and D.\;Hoffman and L.\;N.\;Mann\;\cite[Th.\;1]{HoffmanMann:PAMS:76} about diameters of orbits of effective $\ZZZ_p$-actions.

For $x,y\in\RRR^n$ denote by $d(x,y)$ the usual Euclidean distance, and by $B_{r}(x)$, $(r>0)$ the open ball of radius $r$ centered at $x$.

Let $\Wman$ be an open subset of the half-space $\RRR^{n}_{+}=\{x_n\geq0\}$ and $x\in\Wman$.
Define the \myemph{radius $r_x$ of convexity of $\Wman$ at $x$} in the following way.
If $x\in\Int{\RRR^{n}_{+}}\cap\Wman$, then 
$$
r_x = \sup \{ r>0 \ : \ B_r(x)\subset\Wman\}
$$
Otherwise, $x\in\partial\RRR^{n}_{+}\cap\Wman$ and we put
$$
r_x = \sup \{ r>0 \ : \ (B_r(x)\cap\RRR^{n}_{+})\subset\Wman\}.
$$

\begin{lemma}\label{lm:Dress:Topol:1969:Lm_3}{\rm\;\cite[Lm.\;3]{Dress:Topol:1969}}
Let $U\subset\RRR^n$ be an open, relatively compact and connected subset, $p$ be a prime, and $\dif:\cl{U}\to\cl{U}$ be a homeomorphism which induces a non-trivial $\ZZZ_{p}$-action, that is $\dif\not=\id_{\cl{U}}$ but $\dif^{p}=\id_{\cl{U}}$.
Define two numbers:
$$
\begin{array}{rcl}
D(U) & = & \max\{ \min\{d(x,y)  \ : \ y\in\cl{U}\} \ : \ x \in U  \}, \\ [2mm]
C(U) & = & \max\{ d(x,\dif^{a}(x))  \ : \ a=0,\ldots,p-1, \, x\in\cl{U}\setminus\Int{\cl{U}}\}.
\end{array}
$$
Then $D(U) < C(U)$.
\end{lemma}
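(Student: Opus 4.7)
The plan is to argue by contradiction, assuming $D(U) \geq C(U)$. By the definition of $D(U)$, this yields a point $x_0 \in U$ and a radius $r \geq C(U)$ such that the closed ball $\cl{B_r(x_0)}$ is contained in $\cl U$. The goal is to derive a contradiction with the hypothesis that $\dif$ is a non-trivial homeomorphism of prime period $p$.

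The central tool is the Euclidean averaging map
\[
\sigma : \cl U \to \RRR^n, \qquad \sigma(x) = \frac{1}{p}\sum_{a=0}^{p-1}\dif^{a}(x),
\]
which is continuous and $\dif$-invariant in the sense $\sigma\circ\dif = \sigma$. By the very definition of $C(U)$, it satisfies $\|\sigma(x) - x\| \leq C(U) \leq r$ on the topological boundary $\cl U \setminus \Int{\cl U}$. The first step is a Brouwer degree argument applied to $x \mapsto x - \sigma(x)$ on $\cl{B_r(x_0)}$: the boundary estimate prevents this map from being homotoped to a non-vanishing one along the portion of $\partial B_r(x_0)$ meeting $\cl U \setminus \Int{\cl U}$, while on the remaining portion the map agrees with $\id - \sigma$ and has the standard local degree at $x_0$. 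This forces a zero $y \in \cl{B_r(x_0)}$, i.e.\ a point equal to the centroid of its own orbit; since $p$ is prime this in turn forces $\dif(y) = y$.

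The remaining, most delicate step is to upgrade this single fixed point into a contradiction with $\dif \neq \id_{\cl U}$. I would iterate the averaging/degree argument on a shrinking family of concentric balls centred at $y$, using at each scale the boundary-orbit estimate coming from $C(U)$, to deduce that $\dif$ acts as the identity on an open neighbourhood of $y$. Once triviality is established on a non-empty open subset of the connected set $\cl U$, the propagation argument already used in the proof of Lemma~\ref{lm:mu_p_is_regular_PF} --- which rests on Newman's Theorem~\ref{th:Newman_th} --- extends it to all of $\cl U$, contradicting $\dif \neq \id_{\cl U}$. The main obstacle is precisely this passage from an isolated fixed point to a fixed neighbourhood, which is where the proof has to invoke the Smith-theoretic content of Newman's theorem rather than the cheaper averaging argument used above.
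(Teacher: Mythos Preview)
The paper does not prove this lemma at all; it is quoted from Dress's paper \cite[Lm.\;3]{Dress:Topol:1969} and used as a black box. So there is no ``paper's own proof'' to compare against, and your proposal must stand on its own.

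It does not. There are two genuine gaps.

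\smallskip
\textbf{(i)} The implication ``$\sigma(y)=y$ and $p$ prime $\Rightarrow$ $\dif(y)=y$'' is false for $p\geq 3$. The equation $\sigma(y)=y$ says only that $\sum_{a=1}^{p-1}\bigl(\dif^{a}(y)-y\bigr)=0$; for $p=2$ this does force $\dif(y)=y$, but for $p=3$ it merely says that $y$ is the midpoint of $\dif(y)$ and $\dif^{2}(y)$. A concrete counterexample in $\RRR^{3}$: let $R$ be rotation by $2\pi/3$ about an axis, pick any homeomorphism $\phi$ of $\RRR^{3}$ carrying one equilateral-triangle orbit of $R$ to the collinear triple $\{(0,0,0),(1,0,0),(-1,0,0)\}$, and set $\dif=\phi R\phi^{-1}$. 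Then $\dif$ has period $3$, the orbit of the origin is that collinear triple, and its centroid is the origin; so $\sigma(0)=0$ while $\dif(0)\neq 0$.

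\smallskip
\textbf{(ii)} The proposed iteration on shrinking balls around $y$ cannot be carried out. The only orbit-diameter bound you have is $\|\sigma(x)-x\|\leq\frac{p-1}{p}C(U)$ \emph{for $x\in\cl{U}\setminus\Int{\cl{U}}$}; on a small sphere $\partial B_{\eps}(y)$ lying in the interior of $\cl{U}$ there is no such control, so the degree argument does not rerun at smaller scales. Consequently you never obtain an open set of fixed points, and the appeal to Lemma~\ref{lm:mu_p_is_regular_PF}/Theorem~\ref{th:Newman_th} has nothing to bite on. (That appeal is in any case logically delicate: Dress's Lemma~3 is precisely the engine of Dress's proof of Newman's theorem, and the sharp inequality $D(U)<C(U)$ is not recoverable from the qualitative statement that fixed sets are nowhere dense.)

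\smallskip
The averaging map $\sigma$ \emph{is} the right object, but the way Dress uses it avoids fixed points entirely. Assuming $D(U)\geq C(U)$, the boundary estimate $\|\sigma(x)-x\|\leq\frac{p-1}{p}C(U)<D(U)\leq d(x,x_0)$ on $\cl{U}\setminus\Int{\cl{U}}$ shows that $\sigma:\cl{U}\to\RRR^{n}$ is homotopic to the inclusion through maps missing $x_0$ on the boundary, hence has degree~$1$ at $x_0$. On the other hand $\sigma$ factors through the orbit space $\cl{U}/\ZZZ_{p}$, and a \v{C}ech cohomology computation with $\ZZZ_{p}$ coefficients forces this degree to vanish modulo~$p$ --- a contradiction. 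This mod-$p$ degree obstruction is the missing idea; producing an individual fixed point is neither sufficient nor necessary.
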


The next Lemma\;\ref{lm:HoffmanMann:diameters_of_orbits} is a variant of\;\cite[Th.\;1]{HoffmanMann:PAMS:76}.
It seems that in the proof of\;\cite[Th.\;1]{HoffmanMann:PAMS:76} the condition of connectedness of the set $U$, see\;\eqref{equ:cup_hi_Bs} below, is missed, c.f.\! paragraph after the assumption (H) on\;\cite[page 345]{HoffmanMann:PAMS:76}.
Therefore we recall the proof which is also applicable to manifolds with boundary.
\begin{lemma}\label{lm:HoffmanMann:diameters_of_orbits}
{\rm c.f.\;\cite[Th.\;1]{HoffmanMann:PAMS:76}}
Let $\Wman\subset\RRR^{n}_{+}=\{x_n\geq0\}$ be an open subset, $p$ be a prime, and $\dif:\cl{\Wman}\to\cl{\Wman}$ be a homeomorphism which induces a nontrivial $\ZZZ_p$-action.
Suppose $\dif(z)=z$ for some $z\in\Wman$.
Let also $r_z$ be the radius of convexity of $\Wman$ at $z$ and $r\in(0,r_z)$.

If $z\in\Int{\RRR^{n}_{+}}$, then there exist $x\in\partial B_{r/2}(z)$ and $a=1,\ldots,p-1$ such that 
$$
d(z,x) \ \leq \ 2 \cdot d(x,\dif^{a}(x)).
$$

If $z\in\partial\RRR^{n}_{+}$, then there exist $x\in\partial (B_{2r/3}(z)\cap\Wman)$ and $a=1,\ldots,p-1$ such that 
$$
d(z,x) \ \leq \ 4\cdot d(x,\dif^{a}(x)).
$$
\end{lemma}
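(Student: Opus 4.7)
The plan is to apply Dress's Lemma~\ref{lm:Dress:Topol:1969:Lm_3} to a connected, $h$-invariant open set $U$ formed from the $h$-orbit of a small ball at $z$, and then trace the resulting boundary displacement back to a point on the prescribed sphere around $z$.

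For the interior case ($z\in\mathrm{Int}(\RRR^n_+)$), set $B=B_{r/2}(z)$, which lies in $\Wman$ since $r<r_z$, and define
\[
U \;=\; \bigcup_{i=0}^{p-1} h^i(B).
\]
Then $U$ is open, relatively compact, $h$-invariant, and connected, because each $h^i(B)$ is open and connected and contains the fixed point $z=h^i(z)$. Moreover $h|_{\cl{U}}$ is non-trivial: if it were the identity it would fix the open set $B$, contradicting the nowhere-density of the fixed point set guaranteed by Newman's Theorem~\ref{th:Newman_th}. Dress's Lemma therefore applies and yields $D(U)<C(U)$; since $B\subset U$ gives the inradius bound $D(U)\ge r/2$, there exist $x_0\in\cl{U}\setminus\mathrm{Int}(\cl{U})$ and $a_0\in\{0,\ldots,p-1\}$ with $d(x_0,h^{a_0}(x_0))>r/2$.

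Each $h^j$ is a homeomorphism sending $\mathrm{Int}(\cl{B})$ into $\mathrm{Int}(\cl{U})$, so any such boundary point $x_0$ must have the form $h^j(y)$ with $y\in\partial B_{r/2}(z)$ and $j\in\{0,\ldots,p-1\}$. The triangle inequality
\[
r/2 \;<\; d\bigl(h^j(y),h^{j+a_0}(y)\bigr) \;\le\; d(y,h^j(y))+d(y,h^{j+a_0}(y))
\]
then yields $d(y,h^b(y))>r/4$ for some $b\in\{j,\,(j+a_0)\bmod p\}$. Since $d(y,h^0(y))=0$, such $b$ lies in $\{1,\ldots,p-1\}$, and taking $x=y$ and $a=b$ gives $d(z,x)=r/2\le 2\,d(x,h^a(x))$, as required.

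The boundary case ($z\in\partial\RRR^n_+$) follows the same scheme with $B=B_{2r/3}(z)\cap\RRR^n_+$, the half-ball inside $\Wman$. The principal obstacle is that $U=\bigcup h^i(B)$ is not open in $\RRR^n$, so Dress's Lemma cannot be invoked directly. One works instead with the largest full open ball inscribed in $B$, which has radius $r/3$ and center shifted a distance $r/3$ inward from $z$ along the inner normal; this gives the weaker inradius bound $r/3$. After the same triangle-inequality step, together with the geometric passage from the inscribed sphere back to $\partial(B_{2r/3}(z)\cap\Wman)$, one obtains a point on the prescribed sphere with displacement at least $r/6$, which accounts for the weaker constants $2r/3$ and $4$ in place of the interior constants $r/2$ and $2$.
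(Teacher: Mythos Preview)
Your interior case is correct and is exactly the paper's argument, only stated more directly (the paper carries a redundant parameter $s$ and then sets $s=r/2$).

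The boundary case, however, has a genuine gap. Your diagnosis that $U=\bigcup_i h^i(B)$ fails to be open is an artifact of taking $B=B_{2r/3}(z)\cap\RRR^n_+$; if instead one uses the \emph{open} half-ball $A=B_{2r/3}(z)\cap\{x_n>0\}$, then $A$ is open in $\RRR^n$, and since $h$ preserves the manifold interior $W\cap\{x_n>0\}$, each $h^i(A)$ is open in $\RRR^n$ as well. This is what the paper does. Your proposed remedy---replacing the half-ball by the inscribed full ball $B'=B_{r/3}(z')$ with $z'=z+(r/3)e_n$---does not work: the center $z'$ is \emph{not} fixed by $h$, so there is no reason for the sets $h^i(B')$ to overlap, and Dress's Lemma requires the union to be connected. (Connectedness was exactly the hypothesis whose omission in the Hoffman--Mann argument the paper is trying to repair.) Finally, the ``geometric passage from the inscribed sphere back to $\partial(B_{2r/3}(z)\cap W)$'' that you allude to is not a step that can be carried out; the boundary point produced by Dress lies on $\partial\overline{U}$, and one needs it to come from $\bigcup_i h^i(\partial A)$, not from some auxiliary inscribed sphere.

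The paper's route in the boundary case is: take $A=B_{2r/3}(z)\cap\{x_n>0\}$ and $U'=\bigcup_{i}h^i(A)$. Openness is clear. Connectedness holds because by continuity of $h$ at the fixed point $z$ one has $h^i(A)\cap A\neq\varnothing$ for every $i$ (a small enough half-ball around $z$ is mapped by $h^i$ into $A$). The inradius of the half-ball $A$ is $r/3$, giving $D(U')\ge r/3$, and then the argument runs exactly as in the interior case: one obtains $y\in\partial\overline{U'}$ with $d(y,h^b(y))\ge r/3$, writes $y=h^c(x)$ with $x\in\partial A$, and the triangle inequality yields $d(x,h^a(x))\ge r/6$, hence $d(z,x)\le 2r/3\le 4\,d(x,h^a(x))$.
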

\begin{proof}
For simplicity denote $B_{r}(z)$ by $B_r$.

1) First suppose that $z\in\Int{\RRR^{n}_{+}}$.
For each $s\in(0,r_z)$ put
\begin{equation}\label{equ:cup_hi_Bs}
U_{s} \;=\; B_{s} \;\cup\; \dif(B_{s}) \;\cup\; \cdots \;\cup\; \dif^{p-1}(B_{s}).
\end{equation}
Then $U_{s}$ is open, relatively compact, and $\dif$ yields a non-trivial $\ZZZ_p$-action on $\cl{U}$.
Moreover, by assumption $\dif(z)=z$, therefore $U_{s}$ is connected.
Then by Lemma\;\ref{lm:Dress:Topol:1969:Lm_3} 
$$ D(U_{s}) \leq C(U_{s}). $$

Notice that $B_{s} \subset \cl{U_{s}}$, whence $s \leq D_{s}$.

On the other hand, suppose
\begin{equation}\label{equ:assumption_H}
d(y,\dif^{a}(y)) < r-s, \ \ \text{for all $y\in\partial\cl{U_s}$ and $a=1,\ldots,p-1$}.
\end{equation}
Then in particular, $C(U_{s})<r-s$ and thus
$$
s \ \leq \ D(U_{s}) \ \leq \ C(U_{s}) \ < \ r-s,
$$
whence $s<r/2$.

Thus if $s=r/2$, then\;\eqref{equ:assumption_H} fails, whence there exist $y\in\partial\cl{U_{r/2}}$ and $b\in\{0,\ldots,p-1\}$ such that $d(y,\dif^{b}(y))\geq r-r/2=r/2$.
However $$\partial\cl{U_{r/2}} \  \subset \ \mathop\cup\limits_{i=0}^{p-1} \dif^{i}(\partial B_{r/2}),$$
whence $y=\dif^{c}(x)$ for some $x\in\partial B_{r/2}$.
Therefore at least one of the distances $d(x,y)$ or $d(x,\dif^{b}(y))$ is not less than $r/4$.
In other words, $d(x,\dif^{a}(x))\geq r/4$ for some $x\in\partial B_{r/2}$ and $a\in\{1,\ldots,p-1\}$.
Then
$$
d(x,z) \ = \ \tfrac{r}{2} \ = \ 2 \cdot \tfrac{r}{4} \ \leq \ 2 \cdot d(x,\dif^{a}(x)).
$$

\smallskip

2) Let $z\in\partial\RRR^{n}_{+}$.
For each $s\in(0,r_z)$ let $A_{s} = B_{s} \cap \Int{\partial\RRR^{n}_{+}}$ be the open upper half-disk centered at $z$, and 
$$
U'_{s} \;=\; A_{s} \;\cup\; \dif(A_{s}) \;\cup\; \cdots \;\cup\; \dif^{p-1}(A_{s}).
$$
Then $U'_{s}$ is open, relatively compact, and $\dif$ yields a non-trivial $\ZZZ_p$-action on $\cl{U'_{s}}$.
Moreover, it is easy to see that $U'_{s}$ is connected, whence by Lemma\;\ref{lm:Dress:Topol:1969:Lm_3} 
$$ D(U'_{s}) \leq C(U'_{s}). $$

Moreover, $B_{s/2} \subset \cl{U'_{s}}$.
Therefore $s/2 \leq D(U'_{s})$.
Hence if we suppose that $C(U'_{s})<r-s$, then $s/2 < r-s$ and thus $s < 2r/3$.

Put $s=2r/3$.
Then there exists $y \in \partial U'_{2r/3}$ and $b\in\{1,\ldots,p-1\}$ such that $d(y,\dif^{b}(y))>r-2r/3=r/3$.

Again $\partial\cl{U'_{s}} \  \subset \ \mathop\cup\limits_{i=0}^{p-1} \dif^{i}(\partial A_{s})$, whence we can find $x\in\partial A_{2r/3}$ such that $d(x,\dif^{a}(x))>r/6$ for some $a\in\{1,\ldots,p-1\}$.
Then
$$
d(x,z) \ \leq \ \tfrac{2r}{3} \ = \ 4 \cdot \tfrac{r}{6} \ \leq \ 4 \cdot d(x,\dif^{a}(x)).
$$ 
Lemma is proved.
\end{proof}

\subsection{Periodic orbits}\label{sect:diam_periodic_orb}
Let $\AFld$ be a $\Cont{1}$ vector field on a manifold $\Mman$ and $d$ be any Riemannian metric on $\Mman$.
Then for any periodic point $x$ of $\AFld$ the length $l(x)$ of its orbit can be calculated as follows:
\begin{equation}\label{equ:orb_length}
 l(x) = \int\limits_{0}^{\Per(x)}\|\AFld(\AFlow(x,t))\|dt.
\end{equation}
Hence 
\begin{equation}\label{equ:l_estimate_Per_F}
l(x) \;\leq\; \Per(x) \cdot \sup_{t\in[0,\Per(x)]} \|\AFld(\AFlow(x,t))\|.
\end{equation}

Also notice that 
\begin{equation}\label{equ:diam_and_len}
2\cdot\diam(\orb_{x}) \leq l(x).
\end{equation}
Indeed, let $y,z\in\orb_{x}$ be points for which $d(y,z)=\diam(\orb_{x})$.
These points divide $\orb_x$ into two arcs each of which has the length $\geq d(y,z)$.
This implies\;\eqref{equ:diam_and_len}.

\section{$\PF$-functions on the set of non-fixed points}
\label{sect:Pfunc_nonfixed_pt_set}
Let $\Vman\subset\Mman$ be an open subset, $\pfunc:\Vman\setminus\FixF\to\RRR$ be a $\PF$-function and $\alpha\in\RRR$.
Define the following map $\dif_{\alpha}:\Vman\to\Mman$ by:
\begin{equation}\label{equ:h_F_x_mup}
\dif_{\alpha}(x) = 
\begin{cases}
\AFlow(x,\alpha\,\pfunc(x)), & x\in\Vman\setminus\FixF, \\
x, & x\in\FixF\cap\Vman.
\end{cases}
\end{equation}
Then $\dif_{\alpha}$ is continuous on $\Vman\setminus\FixF$ but in general it is discontinuous at points of $\FixF\cap\Vman$.

The aim of this section is to establish implications between the following five statements:
\begin{enumerate}
\item[$(A)$]
The periods of periodic points in $\Vman\setminus\pfunc^{-1}(0)$ are uniformly bounded above with some constant $\upb>0$, that is for each $x\in\Vman$ with $\pfunc(x)\not=0$ we have that $\Per(x)<\upb$.
\item[$(B)$]
Every $z\in\FixF\cap\Vman$ has a neighbourhood $\Wman\subset\Vman$ such that $\pfunc$ is \myemph{regular} on $\Wman\setminus\FixF$, that is for every $y\in\Wman\setminus\FixF$ the restriction of $\pfunc$ to $\orb_{y}\cap\Wman$ is constant.
\item[$(C)_{\alpha}$]
The map $\dif_{\alpha}$ is continuous on all of $\Vman$.
\end{enumerate}

Let $z\in\FixF\cap\Vman$.

\begin{enumerate}
\item[$(D)_{\alpha}$]
Suppose $\alpha=q/p\in\QQQ$, where $q\in\ZZZ$ and $p\in\NNN$.
There exists a neighbourhood $\Wman\subset\Vman$ of $z$ such that $\dif_{\alpha}(\Wman)=\Wman$, the restriction $\dif_{\alpha}:\Wman\to\Wman$ is a homeomorphism, and $\dif_{\alpha}^{p}=\id_{\Wman}$.

\item[$(E)$]
There exist $\MULT>0$, a Euclidean metric $d$ on some neighbourhood $\Wman$ of $z$, and a sequence $\{x_i\}_{i\in\NNN} \subset \Vman\setminus\FixF$ converging to $z$ such that $\pfunc(x_i)\not=0$ and 
$$d(z,x_i) \ < \ \MULT \cdot \diam(\orb_{x_i}\cap\Wman).$$
\end{enumerate}

\begin{remark}\rm
By a \myemph{Euclidean} metric on $\Wman$ in $(E)$ we mean a metric induced by some embedding $\Wman\subset\RRR^n$.
In fact, this condition can be formulated for arbitrary Riemannian metrics, but for technical reasons (see especially Lemma\;\ref{lm:estim_for_F}) we restrict ourselves to Euclidean ones.
\end{remark}

\begin{remark}\rm
If $z\in\FixF\cap\Vman$ has a neighbourhood $\Wman\subset\Vman$ such that $\Wman\setminus\FixF$ is connected, then by Theorem\;\ref{th:description_PF_V} condition $(B)$ holds for $z$.
\end{remark}

\begin{lemma}\label{lm:prop_h_mu_A_BC_D}
The following implications hold true:
$$ {(A)}  \quad  \Rightarrow  \quad  {(B)}\,\&\,{(C)_{\alpha}}. $$

If $\alpha\in\QQQ$, then for every $z\in\FixF\cap\Vman$ 
$$ {(B)}\,\&\,{(C)_{\alpha}} \quad \Rightarrow \quad {(D)_{\alpha}}. $$

If $\dif$ is not the identity on some neighbourhood of $z\in\FixF\cap\Vman$, then 
$$ {(D)_{\alpha}} \quad \Rightarrow \quad {(E)}, $$
and we can take $\MULT=4$ in $(E)$. 
\end{lemma}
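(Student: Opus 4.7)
\emph{$(A)\Rightarrow(B)\,\&\,(C)_{\alpha}$.} Fix $z\in\FixF\cap\Vman$ and, using continuity of $\AFlow$ together with $\AFlow(z,t)=z$, choose an open neighbourhood $\Wman$ of $z$ so small that $\AFlow(\Wman\times[0,\upb])$ lies in a preassigned neighbourhood of $z$ inside $\Vman$. For $(B)$, let $y\in\Wman\setminus\FixF$: if $\pfunc(y)\neq 0$ then $(A)$ forces $\Per(y)<\upb$, so $\orb_{y}$ is contained in that larger neighbourhood, $\orb_{y}\cap\Vman=\orb_{y}$ is connected, and Lemma\;\ref{lm:init_prop_ShAV} makes $\pfunc$ constant on $\orb_{y}$; if $\pfunc(y)=0$, any $y'\in\orb_{y}\cap\Wman$ with $\pfunc(y')\neq 0$ would satisfy $\Per(y')=\Per(y)<\upb$ and the previous case would then yield $\pfunc(y)=\pfunc(y')\neq 0$, a contradiction, so $\pfunc\equiv 0$ on $\orb_{y}\cap\Wman$. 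For $(C)_{\alpha}$, continuity of $\dif_{\alpha}$ is evident away from $\FixF\cap\Vman$; at $x$ close to $z$ we have either $x\in\FixF$, or $\pfunc(x)=0$, or $x$ is periodic with $\Per(x)<\upb$, and only in the last case is $\dif_{\alpha}(x)=\AFlow(x,t_{x})$ nontrivial, with $t_{x}:=\alpha\pfunc(x)\bmod\Per(x)\in[0,\upb)$. Uniform continuity of $\AFlow$ on the compact set $\{z\}\times[0,\upb]$ (on which it equals $z$) then forces $\dif_{\alpha}(x)\to z$ as $x\to z$.

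\emph{$(B)\,\&\,(C)_{\alpha}\Rightarrow(D)_{\alpha}$.} Let $\Wman_{0}$ be an open connected neighbourhood of $z$ on which $\pfunc$ is regular, and write $\alpha=q/p$. Regularity gives $\pfunc\circ\dif_{\alpha}=\pfunc$ on $\Wman_{0}\setminus\FixF$, and an induction then yields
\[
\dif_{\alpha}^{k}(x)\;=\;\AFlow\bigl(x,k\alpha\pfunc(x)\bigr)
\]
for every $x\in\Wman_{0}\setminus\FixF$ whose iterates $\dif_{\alpha}^{j}(x)$ stay in $\Wman_{0}$ for $j<k$. With $k=p$ this gives $\dif_{\alpha}^{p}(x)=\AFlow(x,q\pfunc(x))=x$, and the identity is trivial for $x\in\FixF$. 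The continuity supplied by $(C)_{\alpha}$ makes $\Wman:=\bigcap_{k=0}^{p-1}\dif_{\alpha}^{-k}(\Wman_{0})$ an open neighbourhood of $z$ on which $\dif_{\alpha}^{p}=\id$. For $x\in\Wman$, shifting indices shows that $\dif_{\alpha}(x)$ again lies in $\Wman$ (the only nontrivial case $k=p-1$ uses $\dif_{\alpha}^{p}(x)=x$), so $\dif_{\alpha}(\Wman)\subset\Wman$; then $\dif_{\alpha}^{p-1}$ is a continuous inverse on $\Wman$, and consequently $\dif_{\alpha}|_{\Wman}$ is a self-homeomorphism with $\dif_{\alpha}(\Wman)=\Wman$.

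\emph{$(D)_{\alpha}\Rightarrow(E)$.} The set $\{k\in\ZZZ:\dif_{\alpha}^{k}=\id\text{ on some neighbourhood of }z\}$ is easily checked to be a subgroup of $\ZZZ$, hence equal to $N\ZZZ$ for some $N\mid p$; the non-triviality hypothesis gives $N\geq 2$. Pick a prime $p'\mid N$ and set $\gdif:=\dif_{\alpha}^{N/p'}$, so that $\gdif^{p'}=\id$ on some neighbourhood $\Unbh_{0}$ of $z$ while, by minimality of $N$, $\gdif$ is not the identity on any neighbourhood of $z$. The intersection trick of the previous step shrinks $\Unbh_{0}$ to an open $\gdif$-invariant neighbourhood $\Wman$ of $z$ on which $\gdif^{p'}=\id_{\Wman}$ and $\gdif$ is a self-homeomorphism; thus $\gdif$ generates a non-trivial $\ZZZ_{p'}$-action on $\Wman$ fixing $z$. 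Choose a chart $\phi:\Wman\to\RRR^{n}$ (or $\RRR^{n}_{+}$ if $z\in\partial\Mman$) with $\phi(z)=0$, transport the Euclidean metric to a metric $d$ on $\Wman$, and apply Lemma\;\ref{lm:HoffmanMann:diameters_of_orbits} to $\phi\circ\gdif\circ\phi^{-1}$ for a sequence of radii $r_{i}\downarrow 0$; this produces $x_{i}\to z$ in $\Wman$ and $a_{i}\in\{1,\dots,p'-1\}$ with
\[
d(z,x_{i})\ \leq\ 4\,d\bigl(x_{i},\gdif^{a_{i}}(x_{i})\bigr).
\]
Since $\gdif^{a_{i}}$ is an iterate of the flow shift $\dif_{\alpha}$, the point $\gdif^{a_{i}}(x_{i})$ lies in $\orb_{x_{i}}\cap\Wman$, so the right-hand side is bounded by $4\,\diam(\orb_{x_{i}}\cap\Wman)$; moreover $\pfunc(x_{i})\neq 0$, because otherwise $\dif_{\alpha}(x_{i})=\AFlow(x_{i},0)=x_{i}$ and hence $\gdif^{a_{i}}(x_{i})=x_{i}$, contradicting the choice of $x_{i}$. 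The main obstacle is that the order of $\dif_{\alpha}$ near $z$ need not be prime, so the subgroup analysis leading from $p$ to the prime $p'$ is essential to convert $(D)_{\alpha}$ into a setting in which Lemma\;\ref{lm:HoffmanMann:diameters_of_orbits} applies.
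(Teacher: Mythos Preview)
Your proof is correct and follows essentially the same strategy as the paper's. The only notable variation is in $(B)\,\&\,(C)_{\alpha}\Rightarrow(D)_{\alpha}$: you construct the invariant neighbourhood as the intersection $\Wman=\bigcap_{k=0}^{p-1}\dif_{\alpha}^{-k}(\Wman_{0})$, whereas the paper takes the union $\Wman=\bigcup_{k=0}^{p-1}\dif_{\alpha}^{k}(B)$ of forward iterates of a small ball; both work, and your reduction to a prime order in $(D)_{\alpha}\Rightarrow(E)$ is just a more explicit version of the paper's ``replace $\dif$ by $\dif^{n}$ so that $p/n$ is prime.''
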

\begin{proof}
$(A)\Rightarrow(B)$.
Since $z\in\FixF\cap\Vman$ there exists a neighbourhood $\Wman$ of $z$ such that $\AFlow(\Wman\times[0,c])\subset\Vman$.
Then $\Wman$ satisfies $(B)$.
Indeed, let $y\in\Wman\setminus\FixF$.
We have to show that $\pfunc|_{\orb_y\cap\Wman}$ is constant.

If $\pfunc=0$ on $\orb_y\cap\Wman$ there is nothing to prove.
Therefore we can assume that $\pfunc(y)\not=0$. 
Then, by $(A)$, $\Per(y)<\upb$, whence
$$
\orb_y \ = \
\AFlow(y\times[0,\Per(y)]) \ = \
\AFlow(y\times[0,\upb]) \ \subset \
\AFlow(\Wman\times[0,\upb]) \ \subset \
\Vman.
$$
Thus $\orb_y\cap\Vman=\orb_y$ is connected.
Then, by Lemma\;\ref{lm:init_prop_ShAV}, $\pfunc$ is constant along $\orb_y$ and therefore on $\orb_y\cap\Wman$.

\smallskip 

$(A)\Rightarrow(C)_{\alpha}$.
It suffices to show that $\dif_{\alpha}$ is continuous at each $z\in\FixF\cap\Vman$.
Let $\Vman'\subset\Vman$ be any neighbourhood of $z$ and $\Wman$ be another neighbourhood of $z$ such that $\AFlow(\Wman\times[0,c])\subset\Vman'$.
We claim that $\dif_{\alpha}(\Wman)\subset\Vman'$.
This will imply continuity of $\dif_{\alpha}$ at $z$.

Let $x\in\Wman$.
If $x\in\FixF\cap\Wman$ or $\pfunc(x)=0$, then $\dif_{\alpha}(x)=x\in\Wman\subset\Vman'$.
Otherwise, $\pfunc(x)\not=0$ and $x$ is periodic.
Hence $\dif_{\alpha}(x) = \AFlow(x,\tau)$ for some $\tau\in[0,\Per(x)] \subset[0,c]$.
Therefore $\dif_{\alpha}(x) \in\AFlow(\Wman\times[0,c])\subset\Vman'$.

\smallskip 

$(B)\&(C)_{\alpha}\Rightarrow(D)_{\alpha}$.
We have that $\alpha=q/p$, where $q\in\ZZZ$ and $p\in\NNN$.
For simplicity denote $\dif_{\alpha}$ by $\dif$.
Since $\dif(z)=z$ and $\dif$ is continuous, there exists a neighbourhood $B$ of $z$ such that $\dif^i(\Bman)\subset\Vman$ for all $i=0,\ldots,p$.
Denote
$$
\Wman = \Bman\cup \dif(\Bman) \cup \cdots \cup \dif^{p-1}(\Bman).
$$
We claim that $\Wman$ satisfies (D).

Indeed, let $x\in\Vman$ and suppose that $\dif(x)\in\Vman$ as well.
Since $\dif(x)$ belongs to the orbit of $x$, then, by $(B)$, $\pfunc(x)=\pfunc(\dif(x))$.
Hence 
\begin{multline}\label{equ:h2_Fx2mup}
\dif^{2}(x) = 
\AFlow\bigl(\dif(x),\alpha\cdot\pfunc(\dif(x))\bigr) = 
\AFlow\bigl(\dif(x),\alpha\cdot\pfunc(x)\bigr) = \\ =
\AFlow\bigl(\AFlow(x,\alpha\cdot\pfunc(x)),\alpha\cdot\pfunc(x)\bigr) = 
\AFlow\bigl(x,\, 2\alpha\cdot\pfunc(x)\bigr).
\end{multline}
By induction we will get that if $\dif^i(x)\in\Vman$ for all $i=0,\ldots,j-1$, then
$$
\dif^{j}(x) = \AFlow\bigl(x,\, j\alpha\cdot\pfunc(x)\bigr).
$$
In particular, $\dif^{kp}(x) = \AFlow(x,kq\pfunc(x))=x$ for any $k\in\ZZZ$.
By $(C)_{\alpha}$ we have that $\dif$ is continuous on $\Vman$, whence $\dif$ yields a homeomorphism of $\Wman$ onto itself and $\dif^p|_{\Wman}=\id_{\Wman}$.

\smallskip 

$(D)_{\alpha}\Rightarrow(E)$.
Again denote $\dif_{\alpha}$ by $\dif$.
Since $\dif$ is not the identity on $\Wman$, we can assume that $p$ is a prime and thus the action of $\dif$ is effective.
This can be done by replacing $\dif$ with $\dif^{n}$ for some $n\in\NNN$ such that $p/n$ is a prime.

Decreasing $\Wman$ we can assume that $\Wman$ is an open subset of the half-space $\RRR^n_{+}=\{x_n\geq0\}$.
Let $d$ be the corresponding Euclidean metric on $\Wman$ and $r_z$ be the radius of convexity of $\Wman$ at $z$.
Then by Lemma\;\ref{lm:HoffmanMann:diameters_of_orbits} for each $r\in(0,r_z)$ there exist $x_{r}\in\Wman$ and $a_{r}\in\{1,\ldots,p-1\}$ such that 
$$
d(z,x_r)  \ \leq \
\MULS \cdot r \ \leq \
\MULT \cdot d(x_{r},\dif^{a_{r}}(x_{r})) \ \leq \
\MULT \cdot \diam(\orb_{x_{r}}),
$$
for some $\MULS,\MULT>0$. 
In fact, $\MULS=\tfrac{1}{2}$ and $\MULT=2$ if $z\in\Int{\Mman}$, and $\MULS=\tfrac{2}{3}$ and $\MULT=4$ if $z\in\partial\Mman$.
Notice that $a_{r}$ may take only finitely many values.
Therefore we can find $a\in\{1,\ldots,p-1\}$ and a sequence $\{x_{r_i}\}_{i\in\NNN}$ such that $\lim\limits_{i\to\infty}r_i=0$ and $a_{r_i}=a$ for all $i\in\NNN$.
\end{proof}

\section{Condition $(E)$ for $\Cont{1}$ flows}\label{sect:cond_E_for_C1_flows}
Condition $(E)$ defined in the previous section gives some lower bound for diameters of orbits of a sequence $\{x_i\}_{i\in\NNN}$ of periodic points converging to a fixed point $z$.
In this section it is shown that for a $\Cont{1}$ flow that condition allows to estimate periods of $x_i$.

Let $\Mman$ be a $\Cont{r}$, $(r\geq1)$, connected, $m$-dimensional manifold possibly non-compact and with or without boundary.
Let also $\AFld$ be a $\Cont{r}$ vector field on $\Mman$ tangent to $\partial\Mman$ and generating a $\Cont{r}$-flow $\AFlow:\Mman\times\RRR \to \Mman$.
Again by $\FixF$ we denote the set of fixed points of $\AFlow$ which coincides with the set of \myemph{zeros} (or \myemph{singular points}) of $\AFld$.

\begin{proposition}\label{prop:cond_E_for_C1_vf}
Let $\Vman\subset\Mman$ be an open subset, $\pfunc:\Vman\setminus\FixF\to\RRR$ be a $\PF$-function, $z\in\FixF\cap\Vman$, and $\{x_i\}_{i\in\NNN}\subset\Vman\setminus\FixF$ be a sequence of periodic points converging to $z$ and satisfying $(E)$.
Thus $\pfunc(x_i)\not=0$, and there exists $\MULT>0$ and a Euclidean metric on some neighbourhood of $z$ such that $d(z,x_i)<\MULT\cdot\diam(\orb_{x_i})$.
If the periods of $x_i$ are bounded above with some $\upb>0$, (in particular, condition $(A)$ holds true) then 
\begin{enumerate}
 \item[$(e_1)$~]
there exists $\eps>0$ such that $|\pfunc(x_i)|\geq \Per(x_i) >\eps$ for all $i\in\NNN$,
so the periods are bounded below as well, and
 \item[$(e_2)$~]
$j^1\AFld(z)\not=0$.
\end{enumerate}
\end{proposition}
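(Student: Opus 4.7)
The plan is to deduce both assertions from a single Gronwall-type estimate that controls $\|\AFld\|$ along the orbits $\orb_{x_i}$ in terms of the initial distance $|x_i-z|$, and then combine this with the length--diameter inequality~\eqref{equ:diam_and_len} and the hypothesis $(E)$ to squeeze out a lower bound on $\Per(x_i)$. Assertion $(e_1)$ is first reduced as follows: since $x_i$ is periodic with $\pfunc(x_i)\ne 0$, Lemma~\ref{lm:init_prop_ShAV} gives $\pfunc(x_i)=n_i\Per(x_i)$ for some $n_i\in\ZZZ\setminus\{0\}$, whence $|\pfunc(x_i)|\ge\Per(x_i)$. It therefore suffices to bound $\Per(x_i)$ uniformly from below.

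Work in the Euclidean chart of condition $(E)$ centered at $z=0$. Since $\AFld$ is $\Cont{1}$ with $\AFld(z)=0$, there exist a neighborhood $\Wman_{0}$ of $z$ and a constant $K>0$ such that $\|\AFld(y)\|\le K|y|$ for all $y\in\Wman_{0}$. Because the periods are uniformly bounded by $\upb$ and $\AFlow$ is continuous on $\{z\}\times[0,\upb]$, for all sufficiently large $i$ the whole orbit $\AFlow(x_i\times[0,\Per(x_i)])$ lies inside $\Wman_{0}\cap\Wman$, where $\Wman$ is the neighborhood of $(E)$. A Gronwall estimate applied to $t\mapsto|\AFlow(x_i,t)|$ then yields $|\AFlow(x_i,t)|\le|x_i|\,e^{K\upb}$ on $[0,\Per(x_i)]$, and substituting into~\eqref{equ:orb_length} gives $l(x_i)\le Ke^{K\upb}\,|x_i|\,\Per(x_i)$. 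Combined with~\eqref{equ:diam_and_len} and $(E)$ (using $\orb_{x_i}\cap\Wman=\orb_{x_i}$ for large $i$), this chain of inequalities reads
\[
|x_i|\ <\ \MULT\,\diam(\orb_{x_i})\ \le\ \tfrac{\MULT}{2}\,l(x_i)\ \le\ \tfrac{\MULT\, K\,e^{K\upb}}{2}\,|x_i|\,\Per(x_i),
\]
and dividing through by $|x_i|>0$ yields $\Per(x_i)\ge 2/(\MULT Ke^{K\upb})=:\eps$, proving $(e_1)$.

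For $(e_2)$, I argue by contradiction: if $j^1\AFld(z)=0$ then $\AFld(y)=o(|y|)$ at $z$, so the Lipschitz constant $K$ above may be chosen arbitrarily small by shrinking the neighborhood $\Wman_{0}$. Pick $K>0$ small enough that $2/(\MULT Ke^{K\upb})>\upb$, which is possible since this quantity tends to $+\infty$ as $K\to 0^{+}$. For $i$ large enough the orbit of $x_i$ lies inside the corresponding $K$-Lipschitz neighborhood, the estimate of the previous paragraph applies unchanged, and one concludes $\Per(x_i)>\upb$, contradicting the hypothesis $\Per(x_i)\le\upb$. The main technical obstacle throughout is the bootstrapping step that forces the whole orbit to remain inside the Lipschitz neighborhood for its entire period; this is precisely where the a priori upper bound $\upb$ on $\Per(x_i)$ and the continuity of $\AFlow$ at the fixed point are essential, and without them the Gronwall estimate would lose its uniform control and neither assertion could be extracted.
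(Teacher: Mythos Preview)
Your proof is correct and follows essentially the same strategy as the paper: control $\|\AFld(\AFlow(x_i,t))\|$ by a multiple of $|x_i|$, then chain the length formula~\eqref{equ:orb_length}, the diameter--length inequality~\eqref{equ:diam_and_len}, and hypothesis $(E)$ to force a uniform lower bound on $\Per(x_i)$. The only difference is in how the growth estimate is obtained: the paper packages it as Lemma~\ref{lm:estim_for_F} via the Hadamard lemma (Claim~\ref{clm:Qxk_leq_xa}), producing a single continuous function $\gamma$ with $\|\AFld(\AFlow(x,t))\|\le |x|\,\gamma(x)$ and $\gamma(z)=0$ when $j^1\AFld(z)=0$, so that both $(e_1)$ and $(e_2)$ fall out of the continuity of $\gamma$ at $z$; you instead use an explicit Gronwall bound with a Lipschitz constant $K$ and rerun the argument with $K$ shrunk for $(e_2)$, which is an equivalent and slightly more hands-on route.
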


For the proof we need some statements.
The first one is easy and we left it for the reader.

\begin{claim}\label{clm:cont_sup_gxy}
Let $X$ be a topological space, $K$ be a compact space, and $g:X\times K \to \RRR$ be a continuous function.
Then the following function $\gamma:X\to\RRR$ defined by $\gamma(x) = \sup\limits_{y\in K} g(x,y)$ is continuous.
\qed
\end{claim}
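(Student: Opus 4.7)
The plan is to prove $\gamma$ continuous at an arbitrary point $x_0 \in X$ by splitting the estimate $|\gamma(x) - \gamma(x_0)| < \eps$ into its two one-sided versions, which are of quite different character because $\gamma$ is defined as a supremum.

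First I would verify upper semicontinuity, which uses compactness of $K$ together with a covering argument. Fix $x_0$ and $\eps > 0$. For each $y \in K$, continuity of $g$ at $(x_0, y)$ provides open neighborhoods $U_y \ni x_0$ and $V_y \ni y$ such that $g(x, y') < g(x_0, y) + \eps \leq \gamma(x_0) + \eps$ on $U_y \times V_y$. Since $K$ is compact, finitely many $V_{y_1}, \ldots, V_{y_n}$ cover $K$; set $U_1 = \bigcap_{i=1}^{n} U_{y_i}$. Then for any $x \in U_1$ and any $y \in K$, one has $y \in V_{y_i}$ for some $i$, hence $g(x, y) < \gamma(x_0) + \eps$. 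Taking the supremum over $y$ yields $\gamma(x) \leq \gamma(x_0) + \eps$ on $U_1$.

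Next I would handle lower semicontinuity, where one does not need a covering argument at all: it suffices to follow a single near-maximizing point. Since $g(x_0, \cdot)$ is continuous on the compact set $K$, it attains its supremum at some $y^* \in K$ with $g(x_0, y^*) = \gamma(x_0)$ (otherwise one could instead choose $y^* \in K$ with $g(x_0, y^*) > \gamma(x_0) - \eps$). By continuity of $g$ at $(x_0, y^*)$ there is a neighborhood $U_2$ of $x_0$ such that $g(x, y^*) > g(x_0, y^*) - \eps$ for $x \in U_2$, and then $\gamma(x) \geq g(x, y^*) > \gamma(x_0) - \eps$.

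Combining the two estimates on $U_1 \cap U_2$ gives $|\gamma(x) - \gamma(x_0)| < \eps$, proving continuity at $x_0$. There is no real obstacle here; the only subtlety worth flagging is the asymmetry between the two halves — the upper bound genuinely requires the finite subcover produced by compactness of $K$, whereas the lower bound is a pointwise continuity statement for a single chosen $y^*$. Compactness of $K$ is essential in the upper bound (otherwise the supremum can jump upward in the limit), which matches the classical fact that a supremum of continuous functions is only automatically lower semicontinuous in general.
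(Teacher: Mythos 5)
Your proof is correct. The paper actually leaves this claim to the reader (it is stated with no proof, prefaced by ``The first one is easy and we left it for the reader''), and your argument is precisely the standard one it presupposes: upper semicontinuity via a finite subcover of $K$, lower semicontinuity via a single (near-)maximizing point $y^*$, combined on $U_1\cap U_2$. Your closing remark correctly identifies where compactness is genuinely used.
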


\begin{claim}\label{clm:Qxk_leq_xa}
Let $K$ be a compact manifold and $$Q=(Q_1,\ldots,Q_n):\RRR^n\times K\to\RRR^n$$ be a continuous map satisfying the following conditions:
\begin{enumerate}
  \item[\rm(a)]
$Q(\orig\times K)=\orig$.
  \item[\rm(b)]
For each $k\in K$ the map $Q_k=Q(\cdot, k):\RRR^n\to\RRR^n$ is $\Cont{1}$.
  \item[\rm(c)]
For every $i,j=1,\ldots,n$ the partial derivative $\tfrac{\partial Q_i}{\partial x_j}:\RRR^n\times K\to\RRR$ of the $i$-th coordinate function $Q_i$ of $Q$ in $x_j$ is continuous.
\end{enumerate}
In particular, conditions {\rm(b)} and {\rm(c)} hold if $K$ is a manifold and $Q$ is a $\Cont{1}$ map.
Then there exists a continuous function $\alpha:\RRR^n\to\RRR$ such that 
$$
\|Q(x,k) \| \leq \|x\|\cdot\alpha(x), \qquad (x,k)\in\RRR^n\times K.
$$

If $j^1 Q_k(\orig)=0$ for all $k\in K$, then $\alpha(\orig)=0$.
\end{claim}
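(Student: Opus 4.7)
The plan is to exploit the fundamental theorem of calculus along the ray from the origin to $x$. Since by (a) we have $Q(\orig,k)=\orig$, and by (b) each $Q_k$ is $\Cont{1}$, the scalar function $t\mapsto Q_i(tx,k)$ is $\Cont{1}$ on $[0,1]$ with derivative $\sum_{j=1}^{n}\tfrac{\partial Q_i}{\partial x_j}(tx,k)\cdot x_j$. Integrating from $0$ to $1$ yields the representation
$$
Q_i(x,k)\;=\;\sum_{j=1}^{n} x_j \int_{0}^{1}\frac{\partial Q_i}{\partial x_j}(tx,k)\,dt,
\qquad i=1,\dots,n.
$$
Condition (c) guarantees that the integrand is continuous in $(x,k,t)$, so in particular the integrals above are well-defined and continuous in $(x,k)$.

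Applying the Cauchy--Schwarz inequality coordinatewise and then summing over $i$ gives
$$
\|Q(x,k)\|\;\leq\;\|x\|\cdot\left(\sum_{i,j=1}^{n}\left(\int_{0}^{1}\frac{\partial Q_i}{\partial x_j}(tx,k)\,dt\right)^{\!2}\right)^{\!1/2}
\;\leq\;\|x\|\cdot \beta(x,k),
$$
where
$$
\beta(x,k)\;:=\;\sup_{t\in[0,1]}\left(\sum_{i,j=1}^{n}\left|\frac{\partial Q_i}{\partial x_j}(tx,k)\right|^{2}\right)^{\!1/2}.
$$
Then I set
$$
\alpha(x)\;:=\;\sup_{k\in K}\beta(x,k)\;=\;\sup_{(t,k)\in[0,1]\times K}\left(\sum_{i,j=1}^{n}\left|\frac{\partial Q_i}{\partial x_j}(tx,k)\right|^{2}\right)^{\!1/2},
$$
so that $\|Q(x,k)\|\leq\|x\|\cdot\alpha(x)$ for every $(x,k)\in\RRR^{n}\times K$.

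The only non-trivial point is continuity of $\alpha$. For this I would apply Claim~\ref{clm:cont_sup_gxy} with $X=\RRR^{n}$, the compact space $[0,1]\times K$, and the continuous function
$$
g\bigl(x,(t,k)\bigr)\;=\;\left(\sum_{i,j=1}^{n}\left|\tfrac{\partial Q_i}{\partial x_j}(tx,k)\right|^{2}\right)^{\!1/2},
$$
which is continuous on $X\times([0,1]\times K)$ by condition (c) and continuity of the linear map $(x,t)\mapsto tx$. The claim then yields continuity of $\alpha=\sup g$ on $\RRR^{n}$.

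Finally, if $j^{1}Q_k(\orig)=0$ for every $k\in K$, then $\tfrac{\partial Q_i}{\partial x_j}(\orig,k)=0$ for all $i,j$ and all $k\in K$; taking $x=\orig$ in the definition of $\alpha$ makes every $tx$ equal to $\orig$, so all terms under the supremum vanish and $\alpha(\orig)=0$. There is no serious obstacle in this argument: the $\Cont{1}$-hypothesis (b) provides the fundamental theorem of calculus representation, hypothesis (c) supplies continuity of the integrands jointly in $(x,k)$, and Claim~\ref{clm:cont_sup_gxy} handles the passage from pointwise to continuous supremum.
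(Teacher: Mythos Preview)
Your argument is correct and follows essentially the same route as the paper: both proofs use the Hadamard integral representation $Q_i(x,k)=\sum_j x_j\int_0^1 \tfrac{\partial Q_i}{\partial x_j}(tx,k)\,dt$ coming from (a)--(c), bound $\|Q(x,k)\|$ by $\|x\|$ times a matrix norm, and then invoke Claim~\ref{clm:cont_sup_gxy} over a compact parameter space to get continuity of $\alpha$. The only cosmetic difference is the choice of $\alpha$: the paper sets $q_{ij}(x,k)=\int_0^1 \tfrac{\partial Q_i}{\partial x_j}(tx,k)\,dt$, writes $Q(x,k)=A(x,k)x$, and takes $\alpha(x)=\sup_{(v,k)\in S^{n-1}\times K}\|A(x,k)v\|$ (the operator norm, supremized over $K$), whereas you take the Frobenius norm of the Jacobian $DQ_k(tx)$ and supremize over $(t,k)\in[0,1]\times K$; both choices work and the final clause about $j^1Q_k(\orig)=0$ goes through identically.
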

\begin{proof}
It follows from (a)-(c) and the Hadamard lemma that 
$$Q_i(x,k)=\sum\limits_{j=1}^{n} x_j q_{ij}(x,k)$$
for certain continuous functions $q_{ij}:\RRR^n\times K\to\RRR$ such that $q_{ij}(0,k)=\tfrac{\partial Q_i}{\partial x_j}(0,k)$.
Hence $Q(x,k) = A(x,k) x$, where $A(x,k)=(q_{ij}(x,k))$ is an $(n\times n)$-matrix with continuous entries.
Let $S^{n-1}$ be the unit sphere in $\RRR^{n}$ centered at the origin.
Define $\alpha:\RRR^n\to\RRR$ by
\begin{equation}\label{equ:a_sup_Axk_v}
\alpha(x) = \sup_{(v,k)\in S^{n-1}\times K} \|A(x,k)\cdot v\|.
\end{equation}
Then by Claim\;\ref{clm:cont_sup_gxy} $\alpha$ is continuous.
Moreover,
$$
\|Q(x,k)\| = 
\|A(x,k) x\| =  \left\|A(x,k) \tfrac{x}{\|x\|}\right\| \cdot \|x\| \leq 
\alpha(x) \cdot \|x\|.
$$

It remains to note that if $j^1 Q_k(\orig)=0$, that is $A(\orig,k)=0$, for all $k\in K$, then by\;\eqref{equ:a_sup_Axk_v} 
$\alpha(\orig) = \sup\limits_{(v,k)\in S^{n-1}\times K} \|A(\orig,k)\cdot v\|=0$.
\end{proof}

\begin{lemma}\label{lm:estim_for_F}
Let $\AFld$ be a $\Cont{1}$ vector field in $\RRR^{n}$ such that $\AFld(\orig)=0$ and $(\AFlow_{t})$ be the local flow of $\AFld$.
Then for every $\upb>0$ there exist a neighbourhood $\Wman$ of the origin $\orig\in\RRR^n$ and a continuous function $\gamma:\Wman\to\RRR$ such that 
$$ \|\AFld(\AFlow(x,t))\| \leq \|x\| \cdot \gamma(x) $$
for all $(x,t)\in\Wman\times[-\upb,\upb]$.
If $j^1\AFld(\orig)=0$, then $\gamma(\orig)=0$.

If $\AFld$ is $\Cont{2}$, then we have a usual estimation
$ \|\AFld(\AFlow(x,t))\| \leq \MULA \cdot \|x\|^2 $
for some $\MULA>0$.
\end{lemma}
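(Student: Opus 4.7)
The plan is to reduce the lemma to Claim~\ref{clm:Qxk_leq_xa} applied with the compact parameter space $K=[-\upb,\upb]$. Since $\orig$ is a fixed point of $\AFlow$ and $\AFlow$ is continuous, compactness of $K$ together with $\AFlow(\orig\times K)=\orig$ yields a neighbourhood $\Wman$ of $\orig$ on which $\AFlow$ is defined for all $t\in K$ and takes values in a bounded region. Because $\AFld$ is $\Cont{1}$, the flow $\AFlow$ is also $\Cont{1}$ jointly in $(x,t)$, and so $Q(x,t):=\AFld(\AFlow(x,t))$ is a $\Cont{1}$ map $\Wman\times K\to\RRR^n$ with $Q(\orig,t)=\AFld(\orig)=\orig$ for every $t\in K$. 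Thus hypotheses (a)--(c) of Claim~\ref{clm:Qxk_leq_xa} are met, and since the proof of that claim is purely local in $x$ (Hadamard on a star-shaped neighbourhood of $\orig$), it applies on $\Wman\times K$ in place of $\RRR^n\times K$.

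I would then invoke the claim to obtain continuous functions $q_{ij}:\Wman\times K\to\RRR$ with $Q_i(x,t)=\sum_{j}x_j\,q_{ij}(x,t)$; joint continuity of the $q_{ij}$ comes from the integral formula $q_{ij}(x,t)=\int_0^1 (\partial Q_i/\partial x_j)(sx,t)\,ds$ together with continuity of the partial derivatives on $\Wman\times K$. Setting $A(x,t)=(q_{ij}(x,t))$ and
$$\gamma(x)\;=\;\sup_{(v,t)\in S^{n-1}\times K}\|A(x,t)\cdot v\|,$$
Claim~\ref{clm:cont_sup_gxy} gives continuity of $\gamma$, and $\|Q(x,t)\|\leq\|x\|\cdot\gamma(x)$ holds on $\Wman\times K$. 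The vanishing statement for $\gamma(\orig)$ follows because, by the chain rule applied at $\orig$ (using $\AFlow(\orig,t)=\orig$),
$$DQ_{t}(\orig) \;=\; D\AFld(\orig)\cdot D_x\AFlow(\orig,t),$$
so $j^{1}\AFld(\orig)=0$ forces $q_{ij}(\orig,t)=(\partial Q_i/\partial x_j)(\orig,t)=0$ for every $(i,j,t)$, whence $A(\orig,t)\equiv 0$ and $\gamma(\orig)=0$.

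For the $\Cont{2}$ quadratic estimate (which, as phrased, is intended under the standing vanishing hypothesis $j^{1}\AFld(\orig)=0$, since it fails if $\AFld$ is linear at $\orig$), the flow of a $\Cont{2}$ field is $\Cont{2}$, so $Q$ is $\Cont{2}$ in $x$ and each $q_{ij}$ is $\Cont{1}$ in $x$ on $\Wman$ with $q_{ij}(\orig,t)=0$. A second application of the Hadamard lemma then gives $q_{ij}(x,t)=\sum_k x_k\,r_{ijk}(x,t)$ with continuous $r_{ijk}$, and the identity $Q_i(x,t)=\sum_{j,k}x_j x_k\,r_{ijk}(x,t)$ together with boundedness of the $r_{ijk}$ on a compact neighbourhood of $\orig$ in $\Wman\times K$ yields $\|Q(x,t)\|\leq\MULA\|x\|^2$.

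The only real technical point is ensuring joint $(x,t)$-continuity of the Hadamard coefficients, which the integral representation handles directly; the minor mismatch between the global formulation of Claim~\ref{clm:Qxk_leq_xa} and the local setting needed here is harmless because the claim's proof is local in $x$ and the parameter space $K=[-\upb,\upb]$ is already compact.
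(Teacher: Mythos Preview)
Your proof is correct, but it takes a somewhat different route from the paper's. You apply Claim~\ref{clm:Qxk_leq_xa} \emph{once}, directly to the composite map $Q(x,t)=\AFld(\AFlow(x,t))$, and then use the chain rule $DQ_t(\orig)=D\AFld(\orig)\cdot D_x\AFlow(\orig,t)$ to see that $j^1\AFld(\orig)=0$ forces $A(\orig,t)\equiv 0$ and hence $\gamma(\orig)=0$. The paper instead applies Claim~\ref{clm:Qxk_leq_xa} \emph{twice}, separately to $\AFlow$ (with $K=[-\upb,\upb]$) and to $\AFld$ (with trivial $K$), obtaining continuous functions $\alpha,\beta$ with $\|\AFlow(x,t)\|\le\|x\|\,\alpha(x)$ and $\|\AFld(y)\|\le\|y\|\,\beta(y)$, and then chains these to define $\gamma(x)=\sup_{t}\alpha(x)\,\beta(\AFlow(x,t))$. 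Your approach is a little more economical; the paper's decomposition makes it more visible exactly where the hypothesis $j^1\AFld(\orig)=0$ enters (namely through $\beta(\orig)=0$), and avoids invoking the chain rule for the composite. Your treatment of the $\Cont{2}$ quadratic estimate, including the observation that it implicitly requires $j^1\AFld(\orig)=0$, is also correct; the paper does not spell out this part.
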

\begin{proof}
Since $\AFlow(0,t)=0$ for all $t\in\RRR$, there exists a neighbourhood $\Wman$ of $z$ such that for each $(x,t)\in\Wman\times[-\upb,\upb]$ the point $\AFlow(x,t)$ is well-defined and belongs to $\Vman$, so $\AFlow(\Wman\times[-\upb,\upb])\subset\Vman$.

Moreover, $\AFlow$ is $\Cont{1}$ and therefore it satisfies assumptions (a)-(c) of Claim\;\ref{clm:Qxk_leq_xa} with $K=[-\upb,\upb]$.
Hence there exists a continuous function $\alpha:\Wman\to\RRR$ such that 
$$
\|\AFlow(x,t)\| \leq \|x\|\cdot \alpha(x), \qquad (x,k)\in\Wman\times[-\upb,\upb].
$$

Moreover, $\AFld$ is also $\Cont{1}$ and $\AFld(\orig)=0$, whence again by Claim\;\ref{clm:Qxk_leq_xa} (for $K=\varnothing$) there exists a continuous function $\beta:\Wman\to\RRR$ such that 
$$
\|\AFld(x)\| \leq \|x\|\cdot \beta(x).
$$
Define $\gamma:\Wman\to\RRR$ by
$$
\gamma(x) = \sup_{t\in[-\upb,\upb]} \alpha(x) \cdot \beta(\AFlow(x,t)).
$$
Then by Claim\;\ref{clm:cont_sup_gxy} $\gamma$ is continuous and 
\begin{multline*}
\|\AFld(\AFlow(x,t))\|\leq 
\|\AFlow(x,t))\| \cdot \beta(\AFlow(x,t)) \leq \\ \leq
\|x\| \cdot \alpha(x) \cdot \beta(\AFlow(x,t)) \leq \|x\|\cdot \gamma(x).
\end{multline*}
Moreover, if $j^1\AFld(\orig)=0$, then $\beta(\orig)=0$.
Since in addition $\AFlow(\orig,t)=\orig$, we obtain that 
$\gamma(\orig)= \sup\limits_{t\in[-\upb,\upb]} \alpha(\orig) \cdot \beta(\orig)=0$ as well.
\end{proof}

\begin{proof}[\bf Proof of Proposition\;\ref{prop:cond_E_for_C1_vf}]
We have to show that violating either of assumptions $(e_1)$ or $(e_2)$ leads to a contradiction.

By Lemma\;\ref{lm:estim_for_F} for any $\upb>0$ there exist a neighbourhood $\Wman$ of $z$ and a continuous function $\gamma:\Wman\to\RRR$ such that 
\begin{equation}\label{equ:AAxt_dxt_gx}
\|\AFld(\AFlow(x,t))\| \leq d(x,z)\cdot \gamma(x), \qquad (x,t)\in\Wman\times[-\upb,\upb].
\end{equation}
Then
\begin{multline*}
d(z,x_i)  < \MULT\cdot\diam(\orb_{x_i}) 
      \stackrel{\eqref{equ:diam_and_len}}{\leq} 
\tfrac{\MULT}{2}\cdot l(x_i) 
      \stackrel{\eqref{equ:orb_length}~\&\;\eqref{equ:AAxt_dxt_gx}}{\leq} 
\tfrac{\MULT}{2} \cdot \Per(x_i) \cdot d(z,x_i) \cdot \gamma(x_i).
\end{multline*}
Therefore 
$$
0 < \tfrac{2}{\MULT} < \Per(x_i) \cdot \gamma(x_i) \leq |\pfunc(x_i)| \cdot \gamma(x_i).
$$
Hence if $(e_1)$ is violated, i.e.\! $\lim\limits_{i\to\infty}\Per(x_i)=0$, then $\lim\limits_{i\to\infty}\gamma(x_i)=+\infty$, which contradicts to continuity of $\gamma$ near $z$.

Suppose $j^1\AFld(z)=0$. 
Then by Lemma\;\ref{lm:estim_for_F} $\gamma(z)=0$, whence $\lim\limits_{i\to\infty}\gamma(x_i)=\gamma(z)=0$.
Therefore $\lim\limits_{i\to\infty}\Per(x_i) = \lim\limits_{i\to\infty}|\pfunc(x_i)| =  + \infty$, which contradicts to boundedness of periods of $x_i$.
\end{proof}

\section{Unboundedness of periods}\label{sect:unboundedness_of_periods}
Let $E_k$ be the unit $(k\times k)$-matrix, $C$ be a square $(k\times k)$-matrix, and $a,b\in\RRR$.
Define the following matrices:
$$
\Jord_p(C) = 
\left(
\begin{smallmatrix}
C      & 0      & \cdots & 0    \\
E_k      & C      & \cdots & 0    \\
\cdots & \cdots & \cdots & \cdots \\
0      & \cdots & E_k      & C    
\end{smallmatrix}
\right),
\ \
R(a,b) = 
\left(\begin{smallmatrix}
 a & b \\ - b & a.
\end{smallmatrix}\right),
\ \
\Jord_{p}(a\pm ib) =  \Jord_{p}(R(a,b)).
$$
For square matrices $B,C$ it is also convenient to put
$B\oplus C = \left(\begin{smallmatrix}  B & 0 \\ 0 & C \end{smallmatrix}\right)$.

\medskip 

Now let $\AFld$ be a $\Cont{1}$ vector field in $\RRR^n$ such that $\AFld(\orig)=0$.
We can regard $\AFld$ as a $\Cont{1}$ map $\AFld=(\AFld_1,\ldots,\AFld_n):\RRR^n\to\RRR^n$.
Let $$A = \left(\tfrac{\partial\AFld_{i}}{\partial x_j}(\orig)\right)_{i,j=1,\ldots,n}$$ 
be the Jacobi matrix of $\AFld$ at $\orig$.
This matrix also called the \myemph{linear part of $\AFld$ at $\orig$}.
By the real Jordan's normal form theorem $A$ is similar to the matrix of the following form:
\begin{equation}\label{equ:real_Jordan_form_for_A}
\mathop\oplus\limits_{\sigma=1}^{s} \Jord_{q_{\sigma}}(a_{\sigma} \pm i b_{\sigma}))
\ \ \oplus \ \ 
\mathop\oplus\limits_{\tau=1}^{r} \Jord_{p_{\tau}}(\lambda_\tau),
\end{equation}
where $\lambda_{\sigma} \in\RRR$ and $a_{\tau}\pm i b_{\tau} \in\CCC$ are all the eigen values of $A$.

\begin{theorem}\label{th:periods_to_infinity}
Suppose one of the following conditions holds:
\begin{enumerate}
\item
$A$ has an eigen value $\lambda$ such that $\Re(\lambda)\not=0$;
\item
The matrix\;\eqref{equ:real_Jordan_form_for_A} has either a block $\Jord_{q}(\pm ib)$ or $\Jord_{q}(0)$ with $q\geq2$;
\item
$A=0$ and there exists an open neighbourhood $\Vman$ of $\orig$ in $\RRR^n$ and a continuous $\PF$-function $\pfunc:\Vman\setminus\FixF\to\RRR$ which takes non-zero values arbitrary close to $\orig$, that is $\orig\in\cl{\Vman\setminus\pfunc^{-1}(0)}$.
\end{enumerate}
Then there exists a sequence $\{x_i\}_{i\in\NNN} \subset\Vman\setminus\FixF$ which converges to $\orig$ and such that either every $x_i$ is non-periodic, or every $x_i$ is periodic and $\lim\limits_{i\to\infty}\Per(x_i)=+\infty$.
\end{theorem}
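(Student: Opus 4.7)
\smallskip

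The plan is to argue by contradiction. If the conclusion of the theorem fails, then some neighborhood $\Wman\subset\Vman$ of $\orig$ has the property that every point of $\Wman\setminus\FixF$ is periodic and their periods are uniformly bounded by some $\upb>0$; otherwise, either a non-periodic subsequence in $\Vman\setminus\FixF$ or one of periodic points with $\Per\to+\infty$ could be extracted, giving the desired sequence at once. In other words, condition $(A)$ of \S\ref{sect:Pfunc_nonfixed_pt_set} holds on $\Wman$.

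\smallskip

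For cases $(1)$ and $(2)$ I carry out a direct Jordan-type calculation requiring no $\PF$-function. Choose a nonzero $v\in\RRR^n$ in the offending real invariant subspace of $A=j^{1}\AFld(\orig)$: in $(1)$ a real vector in the real generalized eigenspace of an eigenvalue $\lambda$ with $\Re(\lambda)\ne 0$, and in $(2)$ the top real Jordan-chain element of the bad block. In either case $Av\ne 0$. For each small $\eps>0$ the point $x_\eps=\eps v$ lies in $\Wman\setminus\FixF$ and is thus periodic with $t_\eps:=\Per(x_\eps)\le\upb$. Writing $\AFld(y)=Ay+g(y)$ with $\|g(y)\|\le\|y\|\,g_1(y)$ and $g_1(\orig)=0$ (Claim~\ref{clm:Qxk_leq_xa}) and combining Duhamel's formula with the bound $\|\AFlow(\eps v,s)\|\le C_1\eps$ for $s\in[0,\upb]$ (Lemma~\ref{lm:estim_for_F}), I obtain the key estimate
\[
\|(e^{t_\eps A}-I)\,v\|\;\le\;\MULA\,t_\eps\,g_1^{*}(\eps),
\qquad g_1^{*}(\eps):=\sup_{\|y\|\le C_1\eps}g_1(y)\;\xrightarrow[\eps\to 0]{}\;0.
\]
Passing to a subsequence, $t_\eps\to t_\infty\in[0,\upb]$. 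If $t_\infty>0$, letting $\eps\to 0$ gives $e^{t_\infty A}v=v$, which the Jordan structure forbids: for real $\lambda$ one has $e^{t_\infty\lambda}\ne 1$; for $\lambda=a\pm ib$ with $a\ne 0$ the determinant of $e^{TA}-I$ on the invariant real plane equals $e^{2Ta}-2e^{Ta}\cos Tb+1>0$ for $T>0$; for a non-trivial Jordan block the shear contribution $t_\infty v_{q-1}\ne 0$ (appropriately modified by $R(0,b)$ in the $\pm ib$ subcase) in the expansion of $e^{t_\infty A}v_q$ rules out the equality. If instead $t_\infty=0$, expanding $e^{t_\eps A}-I=t_\eps A+O(t_\eps^{2})$ gives $\|(e^{t_\eps A}-I)v\|\ge\tfrac12 t_\eps\|Av\|$ for small $t_\eps$; dividing the estimate above by $t_\eps$ yields $\tfrac12\|Av\|\le\MULA\, g_1^{*}(\eps)\to 0$, contradicting $Av\ne 0$.

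\smallskip

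For case $(3)$, where $A=0$, I invoke the given $\pfunc$ together with the machinery of \S\ref{sect:Pfunc_nonfixed_pt_set}. Condition $(A)$ yields $(B)$ and $(C)_{1/p}$ for every prime $p$, hence $(D)_{1/p}$ via Lemma~\ref{lm:prop_h_mu_A_BC_D}: a local $\ZZZ_p$-action $\dif_{1/p}$ fixing $\orig$. Since $\pfunc$ takes nonzero values arbitrarily close to $\orig$, one may select a connected open $\Vman_0\subset\Vman\setminus\FixF$ close to $\orig$ on which $\pfunc$ is nowhere zero; Theorem~\ref{th:description_PF_V}(B) then gives $\pfunc|_{\Vman_0}=k\theta$ for a nonzero integer $k$ and the generator $\theta$ of $\PPF{\Vman_0}$. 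If $\dif_{1/p}$ were the identity on a neighborhood of $\orig$, then $\pfunc/p$ would itself be a $\PF$-function on $\Vman_0$, forcing $p\mid k$. For any prime $p$ not dividing $k$ (infinitely many exist) $\dif_{1/p}$ is therefore non-trivial, so Lemma~\ref{lm:prop_h_mu_A_BC_D} delivers $(E)$, and Proposition~\ref{prop:cond_E_for_C1_vf}$(e_{2})$ then yields $j^{1}\AFld(\orig)\ne 0$, contradicting $A=0$.

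\smallskip

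The main obstacles I expect are (i) in case $(3)$, arranging the connected component $\Vman_0$ so that it meets the neighborhood on which $\dif_{1/p}$ is defined, and ensuring that the integer $k$ governing $\pfunc|_{\Vman_0}=k\theta$ is genuinely stable under further shrinking of $\Vman_0$ (otherwise the divisibility argument against infinitely many primes cannot be run); and (ii) in cases $(1)$ and $(2)$, justifying the Duhamel-type error bound in the $\Cont{1}$ (rather than $\Cont{2}$) regime, for which Claim~\ref{clm:Qxk_leq_xa} (yielding $g_1^{*}(\eps)\to 0$) is precisely the tool needed. The complex-eigenvalue Jordan subcase of $(2)$, which requires unwinding the real block $\Jord_q(R(0,b))$ rather than its complex counterpart, is the bookkeeping-heaviest step.
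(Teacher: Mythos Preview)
Your argument is correct in outline, and for case~(3) it follows the paper's route --- the chain $(A)\Rightarrow(B)\,\&\,(C)_{1/p}\Rightarrow(D)_{1/p}\Rightarrow(E)$ together with Proposition~\ref{prop:cond_E_for_C1_vf}$(e_2)$ --- while adding the non-triviality check for $\dif_{1/p}$ that the paper leaves implicit. Your obstacle~(i) is real, but the fix is simpler than the $\Vman_0$/Theorem~\ref{th:description_PF_V} detour you propose: since periods are bounded by $\upb$, every $\dif_{1/p}^{\,i}(x)$ lies on $\orb_x\subset\AFlow(\{x\}\times[0,\upb])$, so a single neighbourhood $B$ of $\orig$ with $\AFlow(B\times[0,\upb])\subset\Vman$ works for \emph{all} $p$; pick any $y_0\in B$ with $\pfunc(y_0)\ne 0$, set $n_0=\pfunc(y_0)/\Per(y_0)$, and take $p\nmid n_0$.

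For cases~(1) and~(2) your approach is genuinely different from the paper's. The paper treats them separately: in~(1) it invokes the Hadamard--Perron theorem to produce an actual non-periodic orbit accumulating on $\orig$, and in~(2) it proves Lemma~\ref{lm:nilpotent_1jet}, an explicit cone estimate $\|e^{At}x-x\|\ge c\,|t\cdot p_1(x)|$ showing that points in a fixed cone near $\orig$ have periods exceeding any given bound. Your unified Duhamel argument (bounded periods $\Rightarrow e^{t_\infty A}v=v$ or $Av=0$) is more elementary in case~(1), since it avoids stable-manifold theory altogether, and is comparable in case~(2). The paper's cone lemma gives slightly more geometric information --- an explicit region of large-period points rather than a single ray --- but your version suffices for the theorem. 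One small point: in case~(1) you should take $v$ in the actual (real $2$-plane) eigenspace rather than the generalized eigenspace, so that the ``$e^{t_\infty\lambda}\ne1$''/determinant argument applies directly; as written, a generalized eigenvector would require the same shear analysis you already do in case~(2).
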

\begin{proof}
(1) In this case by Hadamard-Perron's theorem, e.g.\;\cite{HirschPughShub:LNM538:1977}, we can find a non-periodic orbit $\orb$ of $\AFld$ such that $\orig\in\cl{\orb}\setminus\orb$.
This means that there exists a sequence $\{x_i\}_{i\in\NNN}\subset\orb$ converging to $\orig$.

\smallskip 

(2) 
Consider two cases.

(a) If \eqref{equ:real_Jordan_form_for_A} has a block $\Jord_{q}(0)$ with $q\geq2$, then it can be assumed that 
$$
A = \left(
\begin{smallmatrix}
0 & 0 &  \cdots  & 0 \\
1 & 0 &  \cdots  & 0 \\
\cdots &  \cdots &  \cdots  &  \cdots \\
\cdots &  \cdots &  \cdots  &  \cdots
\end{smallmatrix}
\right).
$$

(b) Suppose\;\eqref{equ:real_Jordan_form_for_A} has a block $\Jord_{q}(\pm ib)$ with $q\geq2$.
Then we can regard $\RRR^n$ as $\CCC^2 \oplus \RRR^{n-4}$, so the first two coordinates $x_1$ and $x_2$ are complex.
Therefore it can be supposed that 
$$
A = \left(
\begin{smallmatrix}
i b & 0 &  \cdots  & 0 \\
1 & ib &  \cdots  & 0 \\
\cdots &  \cdots &  \cdots  &  \cdots \\
\cdots &  \cdots &  \cdots  &  \cdots
\end{smallmatrix}
\right).
$$

In both cases denote by $p_1$ the projection to the first (either real or complex) coordinate, i.e. $p_1(x_1,\ldots,x_n)=x_1$.

\begin{lemma}\label{lm:nilpotent_1jet}
Let $S^{n-1}$ be the unit sphere in $\RRR^{n}$ centered at the origin $\orig$, $\eps>0$, 
$$
Y_{\eps} = \{ (x_1,\ldots,x_n) \in S^{n-1} \ : \ |x_1|\geq \eps \},
$$
and $C_{\eps}$ be the cone over $Y_{\eps}$ with vertex at $\orig$.

Then for each $\lpb>0$ there exists a neighbourhood $\Wman=\Wman_{\lpb,\eps}$ of $\orig$ such that every
$$ x \in (\Wman\cap C_{\eps})\setminus \orig$$
is either non-periodic or periodic with period $\Per(x)>\lpb$.
\end{lemma}
\begin{proof}
Let $(\AFlow_{t})$ be the local flow of $\AFld$.
Then in general, $\AFlow$ is defined only on some open neighbourhood of $\RRR^n\times0$ in $\RRR^n\times\RRR$.
Nevertheless, since $\AFlow(\orig,t)=\orig$ for all $t\in\RRR$, it follows that for each $\lpb>0$ there exists a neighbourhood $\Vman$ of $\orig$ such that $\AFlow$ is defined on $\Vman\times[-2\lpb,2\lpb]$.

We claim that in both cases there exists $c>0$ such that 
\begin{equation}\label{equ:eAtx-x}
\|e^{At}x - x \| \ \geq \ c |t\cdot x_1| \ = \ c |t\cdot p_1(x)|.
\end{equation}

(a) In this case $e^{At}x = (x_1, \,t x_1+ x_2, \,\ldots)$ and we can put $c=1$: 
$$
\|e^{At}x - x \| = \|(0, \,t x_1, \,\ldots) \| \ \geq \ |t\cdot x_1| \ = \ |t\cdot p_1(x)|.
$$

(b) Now $e^{At}x = (e^{i bt}x_1, \,e^{i bt} (t x_1+ x_2), \,\ldots)$.
Notice that we can write $e^{ibt} = 1 + t\gamma(t)$ for some smooth function $\gamma:\RRR\to\CCC\setminus\{0\}$.
Denote $c = \min\limits_{t\in[-2\lpb,2\lpb]}|\gamma(t)|$.
Then $c>0$ and 
$$
\|e^{At}x - x \| = \|(e^{i bt}x_1-x_1, \,\ldots) \| \ \geq \ |t \cdot \gamma(t) \cdot x_1| \ = \ c |t\cdot p_1(x)|.
$$

Since $\AFlow$ is a $\Cont{1}$ map and $\AFlow(\orig,t)=\orig$ for all $t\in\RRR$, it follows from Claim\;\ref{clm:Qxk_leq_xa} that there exists a continuous function $\alpha:\Vman\to[0,+\infty)$ such that 
\begin{enumerate}
 \item
$\alpha(\orig)=0$,
 \item
$\|\AFlow(x,t)-e^{At}x\|\leq \|x\|\cdot\alpha(x)$ \ for all $(x,t)\in\Vman\times[-2\lpb,2\lpb]$.
\end{enumerate}
Hence 
$$
\| \AFlow(x,t) - x \| \geq \| e^{At}x  - x \| \!-\! \| \AFlow(x,t) - e^{At}x \| \geq c |t\cdot p_1(x)| \!-\! \|x\|\alpha(x)
$$
for $(x,t)\in\RRR^n\times[-2\lpb,2\lpb]$.
Moreover, if $p_1(x)\not=0$, then
$$
\| \AFlow(x,t) - x \| \geq
c |t\cdot p_1(x)| - \|x\|\,\alpha(x) =
c |p_1(x)| \cdot \left(|t| - \tfrac{\|x\|\,\alpha(x)}{c\cdot |p_1(x)|} \right).
$$

Since $\alpha(\orig)=0$, there exists a neighbourhood $\Wman\subset\Vman$ of $\orig$ such that 
$$
\alpha(x) < c \cdot \eps \cdot \lpb, \qquad x\in\Wman.
$$

Now let $x\in(\Wman\cap C_{\eps})\setminus\orig$.
Then $|x|\leq 1$ and $|p_1(x)|\geq \eps$, whence 
$$
 \frac{\|x\|\,\alpha(x)}{c \cdot |p_1(x)|} < \frac{1 \cdot c \cdot \eps \cdot \lpb}{c \cdot \eps} = \lpb. 
$$
Therefore $\| \AFlow(x,t) - x \| \geq c \cdot \eps (|t|-\lpb)$ for $t\in[-2\lpb,2\lpb]$.
In particular, $\AFlow(x,t)\not= x$ for $t\in[\lpb,2\lpb]$.
It follows that $x$ is either non-periodic, or periodic with the period being greater than $2\lpb-\lpb=\lpb$.
\end{proof}

(3) 
Suppose that there exists a neighbourhood $\Vman$ of $\orig$ such that all points in $\Vman\setminus\FixF$ are periodic.
If the periods of points in $\Vman\setminus\pfunc^{-1}(0)$ are bounded above, i.e. condition $(A)$ holds true, then by $(e_2)$ of Proposition\;\ref{prop:cond_E_for_C1_vf} $j^1\AFld(\orig)\not=0$.
Theorem\;\ref{th:periods_to_infinity} is completed.
\end{proof}

\begin{remark}\rm
It is not true that under assumptions of Lemma\;\ref{lm:nilpotent_1jet} for \myemph{every} sequence $\{x_i\}_{i\in\NNN}$ of periodic points converging to the origin $\orig$ their periods are unbounded.
Indeed, let
$$
A = \left(
\begin{smallmatrix}
0 & 0 &   &  \\
1 & 0 &   &  \\
  &   & 0 & -1 \\
  &   & 1 & 0
\end{smallmatrix}
\right)
$$
and $\AFld(x)=Ax$ be the corresponding linear vector field on $\RRR^4$.
Then any sequence of the form $\{(0,0,x_i,y_i)\}_{i\in\NNN}$ converging to the origin consists of periodic points with periods equal to $2\pi$.
\end{remark}

On the other hand in the following special case this is so.

\begin{example}\rm
Consider the following polynomial $f(x,y)=x^{2b}+y^{2}$ on $\RRR^2$ with $2b\geq4$, and let 
$$\AFld = -f'_{y} \tfrac{\partial}{\partial x} + f'_{x}\tfrac{\partial}{\partial y}  =
-2y \tfrac{\partial}{\partial x} + 2b x^{2b-1} \tfrac{\partial}{\partial y}
$$
be the Hamiltonian vector field of $f$.
Then the orbits of $\AFld$ are the origin $\orig\in\RRR^2$ and concentric simple closed curves wrapping once around the origin, see Figure\;\ref{fig:x2y4}, and the matrix $A$ of the linear part of $\AFld$ at $z$ is nilpotent:
$A=\left(\begin{smallmatrix} 0 & -2 \\ 0 & 0 \end{smallmatrix} \right)$.
Now it follows from the structure of orbits of $\AFld$ and Lemma\;\ref{lm:nilpotent_1jet} that for any sequence $\{x_i\}\subset\RRR^2\setminus\orig$ converging to $\orig$ we have $\lim\limits_{i\to\infty}\Per(x_i)=+\infty$.
\begin{figure}[ht]
\includegraphics[height=1.5cm]{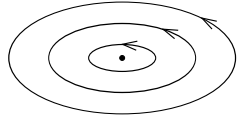}
\label{fig:x2y4}
\caption{$f(x,y)=x^{2b}+y^2$, \ ($2q\geq4$)}
\end{figure}
\end{example}

The following statement extends\;\cite[Pr.\;10]{Maks:TA:2003}, see footnote\;\ref{rem:incorrect_proof_Pr10_Maks:Shifts} on page\;\pageref{rem:incorrect_proof_Pr10_Maks:Shifts}.
\begin{proposition}\label{pr:mu_in_kerA_vanish}{\rm\cite[Pr.\;10]{Maks:TA:2003}.}
Let $\AFld$ be a $\Cont{1}$ vector field on a manifold $\Mman$, $z\in\FixF\setminus\Int{\FixF}$, $\Vman \subset \Mman$ be a neighbourhood of $z$, and $\pfunc\in\PF(\Vman)$ be a $\PF$-function.
Let also $\Wman=\cup_{\lambda\in\Lambda}\Wman_{\lambda}$ be the union of those connected components of $\Vman\setminus\FixF$ for which $z\in\cl{\Wman_{\lambda}}$.
Then each of the following conditions implies that $\pfunc=0$ on $\cl{\Wman}\cap\Vman$:
\begin{enumerate}
\item[\rm(a)]
$j^1\AFld(z)$ has an eigen value $\lambda$ such that $\Re(\lambda)\not=0$;
\item[\rm(b)]
a real normal Jordan form of $j^1\AFld(z)$ has either a block $\Jord_{q}(\pm ib)$ or $\Jord_{q}(0)$ with $q\geq2$.
\item[\rm(c)]
$j^1\AFld(z)=0$;
\item[\rm(d)]
$z\in\cl{\Int{\FixA}}\setminus\Int{\FixF}$.
\item[\rm(e)]
$\pfunc(z)=0$;
\end{enumerate}
\end{proposition}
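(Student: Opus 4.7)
The plan is to dispatch (d) first via a direct appeal to Theorem~\ref{th:description_PF_V}(A), then to establish (e) by a Newman-type argument, and finally to reduce (a), (b), (c) to (e) using Theorem~\ref{th:periods_to_infinity}. Throughout, we argue by contradiction, assuming there is a connected component $\Wman_{\lambda}$ of $\Vman\setminus\FixF$ with $z\in\cl{\Wman_{\lambda}}$ and $\pfunc|_{\Wman_{\lambda}}\not\equiv 0$. By Lemma~\ref{lm:local_uniq_sh_func}, $\pfunc$ is then nowhere zero on $\Wman_{\lambda}$. Since $\Wman_{\lambda}$ is open, connected, and disjoint from $\Int{\FixF}$, Theorem~\ref{th:description_PF_V}(B) yields $\pfunc|_{\Wman_{\lambda}}=n_{\lambda}\theta_{\lambda}$ with $n_{\lambda}\in\ZZZ\setminus\{0\}$, where $\theta_{\lambda}>0$ is a regular $\PF$-function generating $\PPF{\Wman_{\lambda}}$ and equal to $\Per$ on an open dense $Q_{\lambda}\subset\Wman_{\lambda}$.

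Case (d) is immediate: a sufficiently small connected open ball $U\subset\Vman$ around $z$ meets $\Int{\FixF}$, whence Theorem~\ref{th:description_PF_V}(A) forces $\pfunc|_{U\setminus\FixF}=0$. The nonempty open set $U\cap\Wman_{\lambda}$ inherits this, and by local uniqueness combined with connectedness of $\Wman_{\lambda}$ we get $\pfunc\equiv 0$ on $\Wman_{\lambda}$, contradicting the standing assumption.

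For (e), the hypothesis $\pfunc(z)=0$ together with continuity of $\AFlow$ make $\dif(x):=\AFlow(x,\pfunc(x)/p)$, extended by $\dif(z)=z$, continuous at $z$ for each prime $p$. Combined with local regularity of $\pfunc$ (Lemma~\ref{lm:local_regularity_P_func}), this triggers the chain of implications $(B)\,\&\,(C)_{1/p}\Rightarrow (D)_{1/p}\Rightarrow (E)$ of Lemma~\ref{lm:prop_h_mu_A_BC_D}. We now fix a prime $p>|n_{\lambda}|$. If $\dif$ were the identity on a neighbourhood of $z$, then $\pfunc/p$ would be a $\PF$-function on a nonempty open subset of $\Wman_{\lambda}$, and Lemma~\ref{lm:mu_p_is_regular_PF} would propagate this to all of $\Wman_{\lambda}$, giving $\pfunc/p\in\PPF{\Wman_{\lambda}}=\ZZZ\cdot\theta_{\lambda}$ and hence $p\mid n_{\lambda}$, impossible by the choice of $p$. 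Therefore $\dif$ induces a nontrivial $\ZZZ_p$-action near $z$, and the implication $(D)_{1/p}\Rightarrow (E)$, via Lemma~\ref{lm:HoffmanMann:diameters_of_orbits}, supplies a sequence $x_k\to z$ in $\Vman\setminus\FixF$ with $d(z,x_k)\leq 4\diam(\orb_{x_k})$ and $\pfunc(x_k)\neq 0$ (otherwise $\dif$ would fix $x_k$). Such $x_k$ are periodic with $\Per(x_k)\leq |\pfunc(x_k)|$, so continuity of $\pfunc$ at $z$ forces $\Per(x_k)\to 0$; but Proposition~\ref{prop:cond_E_for_C1_vf}$(e_1)$, whose hypotheses (periods bounded above, condition $(E)$) are now fulfilled, delivers a uniform lower bound $\Per(x_k)>\eps>0$. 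Contradiction.

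For (a), (b), (c) it will suffice to show $\pfunc(z)=0$ and then invoke (e). Theorem~\ref{th:periods_to_infinity}, under each of its hypotheses (1), (2), (3)---the last applicable because the standing assumption provides the required nonvanishing of $\pfunc$ near $z$---produces a sequence $y_i\to z$ in $\Vman\setminus\FixF$ consisting of points that are either non-periodic or periodic with $\Per(y_i)\to\infty$. In the first case $\pfunc(y_i)=0$; in the second, writing $\pfunc(y_i)=m_i\Per(y_i)$ with $m_i\in\ZZZ$, the boundedness of $\pfunc(y_i)\to\pfunc(z)$ forces $m_i=0$ for large $i$, so again $\pfunc(y_i)=0$. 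Continuity then gives $\pfunc(z)=0$, and case (e) completes the argument. The main technical obstacle will lie in (e): the relation $\Per(x_k)\to 0$ near a fixed point is not itself a contradiction for a general $\Cont{1}$ flow, and the delicate point is the production---via the sufficiently large prime $p$ and the divisibility Lemma~\ref{lm:mu_p_is_regular_PF}---of a sequence for which the geometric bound $d(z,x_k)\lesssim\diam(\orb_{x_k})$ allows Proposition~\ref{prop:cond_E_for_C1_vf}$(e_1)$ to fire.
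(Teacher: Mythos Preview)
Your proof is correct and follows essentially the same architecture as the paper: reduce (a), (b), (c) to (e) via Theorem~\ref{th:periods_to_infinity}, and settle (e) through the chain $(B)\,\&\,(C)_{\alpha}\Rightarrow(D)_{\alpha}\Rightarrow(E)$ of Lemma~\ref{lm:prop_h_mu_A_BC_D} combined with Proposition~\ref{prop:cond_E_for_C1_vf}.

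Two points of departure are worth noting. First, you dispatch (d) directly by appealing to Theorem~\ref{th:description_PF_V}(A) on a small ball meeting $\Int{\FixF}$; the paper instead reduces (d) to (c) by observing that a sequence $z_i\in\Int{\FixF}$ converging to $z$ forces $j^1\AFld(z)=0$ by $\Cont{1}$ continuity of the Jacobian. Both are short; yours avoids differentiability, the paper's keeps the logical flow funnelled through a single case. Second, in (e) the paper simply notes that boundedness of $\pfunc$ on a compact neighbourhood gives condition $(A)$ and then invokes Lemma~\ref{lm:prop_h_mu_A_BC_D} to reach $(E)$, without spelling out which $\alpha$ is used or why $\dif_{\alpha}$ is nontrivial near $z$. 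Your argument is more explicit here: by writing $\pfunc|_{\Wman_{\lambda}}=n_{\lambda}\theta_{\lambda}$ and choosing a prime $p>|n_{\lambda}|$, you rule out $\dif_{1/p}\equiv\id$ via Lemma~\ref{lm:mu_p_is_regular_PF} and the generation of $\PPF{\Wman_{\lambda}}$. This fills in precisely the step the paper leaves implicit. The contradictions you and the paper derive from Proposition~\ref{prop:cond_E_for_C1_vf}$(e_1)$ are formally different (you use $\Per(x_k)\to 0$ against $\Per(x_k)>\eps$, the paper uses $|\pfunc(x_i)|>\eps$ against $\pfunc(z)=0$) but equivalent, since $|\pfunc(x_i)|\geq\Per(x_i)>\eps$.

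One cosmetic remark: in (e) you do not need the hypothesis $\pfunc(z)=0$ to obtain continuity of $\dif_{1/p}$ at $z$; since $\pfunc\in\PPF{\Vman}$ is already continuous on all of $\Vman$ and $z$ is fixed, $\dif_{1/p}(x)=\AFlow(x,\pfunc(x)/p)$ is automatically continuous there. The assumption $\pfunc(z)=0$ enters only at the very end, to force $\Per(x_k)\leq|\pfunc(x_k)|\to 0$.
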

\begin{proof}
Suppose that $\pfunc$ takes non-zero values on periodic points arbitrary close to $z$.
First we prove that every of the assumptions (a)-(d) implies (e), and then show that (e) gives rise to a contradiction.

\smallskip 

(a)$\vee$(b)$\vee$(c) $\Rightarrow$ (e).
Suppose $j^1\AFld(z)$ satisfies either of the conditions (a), (b) or (c).
Then by assumption on $\pfunc$ and Theorem\;\ref{th:periods_to_infinity} there exists a sequence $\{x_i\}_{i\in\NNN}\subset\Vman\setminus\FixF$ converging to $z$ and such that every $x_i$ is either non-periodic, or periodic but $\lim\limits_{i\to\infty}\Per(x_i)=+\infty$.

If every $x_i$ is non-periodic, then by Lemma\;\ref{lm:init_prop_ShAV} $\pfunc(x_i)=0$, whence by continuity of $\pfunc$ we obtain $\pfunc(z)=0$ as well.

Suppose every $x_i$ is periodic.
Then $\pfunc(x_i)=n_i\Per(x_i)$ for some $n_i\in\ZZZ$.
Since $\lim\limits_{i\to\infty}\Per(x_i)=+\infty$ and $\pfunc$ is continuous, it follows that $\lim\limits_{i\to\infty}n_i=0$, that is $n_i=0$ for all sufficiently large $i$, whence $\pfunc(x_i)=0$ which implies $\pfunc(z)=0$.

\smallskip 

(d) $\Rightarrow$ (c).
The assumption $z\in\cl{\Int{\FixF}}\setminus\Int{\FixF}$ means that there is a sequence $\{z_i\}_{i\in\NNN}\subset\Int{\FixF}$ converging to $z$.
But then $j^1\AFld(z_i)=0$ for all $i$.
Since $\AFld$ is $C^1$, we obtain $j^1\AFld(z)=0$ as well.

\smallskip 

(e). Suppose that $\pfunc(z)=0$.
Let $\Uman$ be a neighbourhood of $z$ with compact closure $\cl{\Uman}\subset\Vman$, and $\upb=\sup\limits_{x\in\cl{\Uman}}|\pfunc(x)|$.
Then the periods of points in $\Uman\setminus\pfunc^{-1}(0)$ are bounded above with $\upb$, that is $\pfunc$ satisfies condition $(A)$, and therefore by Lemma\;\ref{lm:prop_h_mu_A_BC_D} condition $(E)$.
Let $\{x_i\}_{i\in\NNN}\subset\Uman\setminus\FixF$ be a sequence converging to $z$ and satisfying $(E)$.
Then by Proposition\;\ref{prop:cond_E_for_C1_vf} there exists $\eps>0$ such that $|\pfunc(x_i)|>\eps$.
Since $\pfunc$ is continuous, we get $|\pfunc(z)|\geq\eps>0$, which contradicts to the assumption $\pfunc(z)=0$.
\end{proof}

\section{Proof of Theorem\;\ref{th:C1-flows-non-periodic}}
\label{sect:proof:th:C1-flows-non-periodic}
Let $\AFlow$ be the flow generated by a $\Cont{1}$ vector field $\AFld$ and $\theta\in\PPF{\Mman}$ be a non-negative generator of $\PPF{\Mman}$.
Put $Y=\theta^{-1}(0)$.
Then $Y$ is closed.

We claim that \myemph{$Y$ is also open in $\Mman$.}
Indeed, if $x$ is a non-fixed point of $\AFlow$, then by Lemma\;\ref{lm:local_uniq_sh_func} $\theta=0$ on some neighbourhood of $x$.
Suppose $x\in\FixF$.
Since $\FixF$ is nowhere dense in $\Mman$, it follows from (1) of Proposition\;\ref{pr:mu_in_kerA_vanish} that $\theta=0$ on some neighbourhood of $x$ as well.

As $\Mman$ is connected, we obtain that either $Y=\varnothing$ or $Y=\Mman$.
By Theorem\;\ref{th:description_PF_V} $\theta>0$ on $\Mman\setminus\FixF$, whence $Y\not=\Mman$ and therefore $Y=\varnothing$, so $\theta>0$ on all of $\Mman$.

Let $z\in\FixF$.
To establish\;\eqref{equ:j1Fz_ShA_periodic} it suffices to prove that
\begin{enumerate}
\item[(a)]
$j^1\AFld(z)$ has no eigen values $\lambda$ with $\Re(\lambda)\not=0$;
\item[(b)]
a real normal Jordan form of $j^1\AFld(z)$ has neither a block $\Jord_{q}(\pm ib)$ nor $\Jord_{q}(0)$ with $q\geq2$;
\item[(c)]
$j^1\AFld(z)\not=0$.
\end{enumerate}
But if either of these conditions were violated, then it would follow from Proposition\;\ref{pr:mu_in_kerA_vanish} that $\theta(z)=0$.
This contradiction completes Theorem\;\ref{th:C1-flows-non-periodic}.
\qed

\bibliographystyle{amsplain}
\bibliography{period_functions}
\end{document}